\documentclass[11pt,a4paper]{article}
\usepackage{amsmath}
\usepackage{amssymb}
\usepackage{amsthm}
\usepackage[english]{babel}
\usepackage{verbatim}
\usepackage[shortlabels]{enumitem}
\usepackage{tikz-cd}
\usepackage{etoolbox} 
\usepackage[all]{xy}
\usepackage{hyperref}

\newtheoremstyle{mio}%
	{}{} 
	{\itshape}{} 
	{\bfseries}{.}{ } 
	{#1 #2\thmnote{~\mdseries(#3)}} 
\theoremstyle{mio}
\newtheorem{teor}{Theorem}[section]
\newtheorem{cor}[teor]{Corollary}
\newtheorem{prop}[teor]{Proposition}
\newtheorem{lemma}[teor]{Lemma}

\newtheorem{Prop}[teor]{Proposition}
\newtheorem{Thm}[teor]{Theorem}
\newtheorem{Cor}[teor]{Corollary}
\theoremstyle{definition}
\newtheorem{Def}[teor]{Definition}
\newtheorem{Ex}[teor]{Example}
\newtheorem{Rem}[teor]{Remark}
\newtheorem{oss}[teor]{Remark}



\newcommand{\Br}{\mathrm{Br}}

\DeclareMathOperator{\degdom}{degdom}

\newcommand{\ins}[1]{\mathbb{#1}}
\newcommand{\insN}{\ins{N}}
\newcommand{\insZ}{\ins{Z}}
\newcommand{\insQ}{\ins{Q}}
\newcommand{\insR}{\ins{R}}
\newcommand{\inN}{\in\insN}
\newcommand{\inZ}{\in\insZ}
\newcommand{\inR}{\in\insR}
\newcommand{\N}{\mathbb{N}}
\newcommand{\Z}{\mathbb{Z}}
\newcommand{\Zar}{\mathrm{Zar}}
\newcommand{\inv}[1]{\frac{1}{#1}}

\newcommand{\insiemeVE}{\mathcal{V}}
\newcommand{\insiemeVEalg}{\mathcal{V}_\mathrm{alg}}

\newcommand{\cons}{\mathrm{cons}}

\newcommand{\limiti}{\mathcal{L}}
\newcommand{\cball}{\mathrm{CBall}}

\newcommand{\corona}{\mathcal{C}}

\newcommand{\Int}{\mathrm{Int}}
\newcommand{\vEXbeta}{\Delta}

\newcommand{\R}{\mathbb{R}}

\newcommand{\caso}[1]{\medskip\underline{\textbf{Case #1.}}~\smallskip}

\title{The Zariski-Riemann space of valuation domains associated to pseudo-convergent sequences}

\author{
G.\ Peruginelli\footnote{Dipartimento di Matematica "Tullio Levi-Civita", University of Padova, Via Trieste 63,
35121 Padova, Italy. E-mail: gperugin@math.unipd.it}\\
D. Spirito\footnote{Dipartimento di Matematica e Fisica, University of Roma Tre, Largo San Leonardo Murialdo 1, 00146 Roma. E-mail: spirito@mat.uniroma3.it; \emph{Current address}: Dipartimento di Matematica "Tullio Levi-Civita", University of Padova, Via Trieste 63,
35121 Padova, Italy. E-mail: spirito@math.unipd.it}
}

\begin{document}
\noindent\leftmark{Trans. Amer. Math. Soc. 373 (2020), no. 11, 7959-7990.\\ \footnotesize{\href{https://arxiv.org/abs/1809.09539}{https://arxiv.org/abs/1809.09539}},\;\; \href{http://dx.doi.org/10.1090/tran/8185}{http://dx.doi.org/10.1090/tran/8185}}
{\let\newpage\relax\maketitle} 

\begin{abstract}
\noindent Let $V$ be a valuation domain with quotient field $K$. Given a pseudo-convergent sequence $E$ in $K$, we study two constructions associating to $E$ a valuation domain of $K(X)$ lying over $V$, especially when $V$ has rank one. The first one has been introduced by Ostrowski, the second one more recently by Loper and Werner. We describe the main properties of these valuation domains, and we give a notion of equivalence on the set of pseudo-convergent sequences of $K$  characterizing when the associated valuation domains are equal. Then, we analyze the topological properties of the Zariski-Riemann spaces formed by these valuation domains. \\

\noindent Keywords: pseudo-convergent sequence, pseudo-limit, extension of a valuation, residually transcendental extension, Zariski-Riemann space, constructible topology.\\

\noindent MSC Primary 12J20, 13A18, 13F30.
\end{abstract}

\section{Introduction}

Let $V$ be a valuation domain with quotient field $K$. Determining and describing all the extensions of $V$ to the field $K(X)$ of rational functions is an old and well-studied problem, which plays a vital role in several topics in field theory, commutative algebra and beyond (see for example \cite{Kuh} and the references therein). The problem has been approached in a few different ways, with the main ones being through \emph{key polynomials} (starting from the work of MacLane \cite{maclane} and developed, among many others, by Vaqui\'e \cite{vaquie}), \emph{minimal pairs} (introduced by Alexandru, Popescu and Zaharescu \cite{APZ1,APZ2}) and \emph{pseudo-convergent sequences}. The latter were introduced by Ostrowski in \cite{Ostr}, who used them to describe all rank one extensions of the rank one valuation domain $V$ to $K(X)$; subsequently, Kaplansky  used this notion in \cite{Kap} for valuation domains of any rank to characterize immediate extensions of valuation domains and maximally valued fields. More recently, Chabert in \cite{ChabPolCloVal} generalized  Ostrowski's definition by means of \emph{pseudo-monotone sequences}  to characterize the polynomial closure of subsets of valued fields of rank one, an important topic in the study of integer-valued polynomials.

In this paper,  we study two constructions of extensions of $V$ to $K(X)$ associated to a pseudo-convergent sequence $E\subset K$. The first one, which we denote by $V_E$, is the same construction introduced by Loper and Werner \cite{LW} for certain kinds of pseudo-convergent sequences on rank one valuation domains; we show that it actually defines a valuation domain for every pseudo-convergent sequence and for valuation domains of any rank (Theorem \ref{Thm:VE valuation domain}). The second one, which we denote by $W_E$, applies only when $V$ has rank one and $E$ satisfies some conditions, and is defined through its valuation $w_E$, which was already introduced by Ostrowski \cite{Ostr}; for this reason, we name it the \emph{Ostrowski valuation} associated to $E$. In Sections \ref{sect:VE} and \ref{sect:WE}, we investigate the structure of these valuation domains, in particular when $V$ has rank one; among other things, we show that $V_E\subseteq W_E$, we characterize when $V_E$ has rank 2 (Theorem \ref{teor:rank-VE}), we find their value group and residue field, and we describe explicitly the valuation $v_E$ associated to $V_E$ as a map from $K(X)$ to $\insR^2$ (Theorem \ref{teor:valut-dim2}). Many of these results are based on a general theorem (Theorem \ref{teor:linear}) which expresses the valuation of $\phi(t)$, for a rational function $\phi(X)$, as a linear function of $v(t-s)$, in an annulus of center $s$ which contains neither poles nor zeros of $\phi(X)$.

In Section \ref{sect:equiv}, following Ostrowski, we investigate the notion of  equivalence between two pseudo-convergent sequences, analogous to the concept of equivalence between two Cauchy sequences. We show that two pseudo-convergent sequences are equivalent if and only if their associated valuation rings are equal;  moreover, if they are of algebraic type then these conditions are also equivalent to the property of having the same set of pseudo-limits (in the algebraic closure of $K$ and with respect to the same extension of $v$; see Theorem \ref{teor:equiv-VE}). We also give a geometric interpretation of this fact in Section \ref{sect:geomint}. Using these results, we show that the extensions of an Ostrowski valuation $w_E$ to $\overline{K}(X)$ is completely determined by its restriction to $\overline{K}$ (Theorem \ref{Teor:extension wE}).

In Section \ref{sect:spaces}  we study the spaces $\insiemeVE$ and $\mathcal{W}$ formed, respectively, by the rings of the form $V_E$ and by the rings of the form $W_E$, from a topological point of view; more precisely, we study the Zariski and the constructible topologies they inherit from the Zariski-Riemann space $\Zar(K(X)|V)$. In particular, we first analyze the difference between these two topologies, showing that they coincide on $\mathcal{W}$ (Proposition \ref{prop:spazioW}), while they coincide on $\insiemeVE$ if and only if the residue field of $V$ is finite (Proposition \ref{prop:costr-VE}); we also show how $V_E$ can be seen as a limit of valuation domains defined from the members of $E$, mirroring the fact that (classes of) Cauchy sequences can be associated to their limit points (Proposition \ref{prop:limitcostr}). In Section \ref{sect:separation}, we show that $\insiemeVE$, endowed with the Zariski topology, is a regular space and we deduce a sufficient condition for $\insiemeVE$ to be metrizable.


\section{Background and notation}\label{sect:background}
Throughout the article, $V$ is a valuation domain; we denote by $K$ its quotient field, by $M$ its maximal ideal and by $v$ the valuation associated to $V$. Its value group is denoted by $\Gamma_v$. We denote by $\widehat{K}$ and $\widehat{V}$ the completion of $K$ and $V$, respectively, with respect to the topology induced by the valuation $v$. We still denote by $v$ the unique extension of $v$ to $\widehat K$ (whose valuation domain is precisely $\widehat{V}$). We denote by $\overline{K}$ a fixed algebraic closure of $K$. If $u$ is an extension of $v$ to $\overline{K}$, then the value group of $u$ is the divisible hull of $\Gamma_v$, i.e., $\insQ\Gamma_v=\insQ\otimes_\insZ\Gamma_v$. 

The basic objects of study of this paper are pseudo-convergent sequences, introduced by Ostrowski in \cite{Ostr} and used by Kaplansky in \cite{Kap} to describe immediate extensions of valued fields. Related concepts are  \emph{pseudo-stationary} and \emph{pseudo-divergent} sequences \cite{ChabPolCloVal}, which we consider in \cite{PS2}.
\begin{Def}
Let $E=\{s_n\}_{n\in\N}$ be a sequence in $K$. We say that $E$ is a \emph{pseudo-convergent} sequence if $v(s_{n+1}-s_n)<v(s_{n+2}-s_{n+1})$ for all $n\in\N$.
\end{Def}
In particular, if $E=\{s_n\}_{n\in\N}$ is a pseudo-convergent sequence and $n\geq 1$, then $v(s_{n+k}-s_n)=v(s_{n+1}-s_n)$ for all $k\geq 1$. We shall usually denote this quantity by $\delta_n$; following \cite[p. 327]{Warner} we call the sequence $\{\delta_n\}_{n\inN}$ the \emph{gauge} of $E$.

We shall make use of the following notation: given a sequence of real numbers   $\{r_n\}_{n\in\N}$ and $r\in\R\cup\{\infty\}$, we write $r_n\nearrow r$ if $r_n$ is a strictly increasing sequence with limit $r$. 

\begin{Def}
The \emph{breadth ideal} of $E$ is
\begin{equation*}
\Br(E)=\{b\in K \mid v(b)>v(s_{n+1}-s_n),\forall n\in\N\}.
\end{equation*}
\end{Def}
In general, $\Br(E)$ is a fractional ideal of $V$ and may not be contained in $V$.

The following definition has been introduced in \cite{Kap}, even though already in \cite[p. 375]{Ostr} an equivalent concept appears (see \cite[X, p. 381]{Ostr} for the equivalence).
\begin{Def}
An element $\alpha\in K$ is a \emph{pseudo-limit} of $E$ if $v(\alpha-s_n)<v(\alpha-s_{n+1})$ for all $n\in\N$, or, equivalently, if $v(\alpha-s_n)=\delta_n$ for all $n\in\N$. We denote the set of pseudo-limits of $E$ by $\limiti_E$, or $\limiti_E^v$ if we need to emphasize the valuation.
\end{Def}
If $\Br(E)$ is the zero ideal then $E$  is a Cauchy sequence in $K$ and converges to an element of $\widehat{K}$, which is the unique pseudo-limit of $E$. In general, Kaplansky proved the following more general result.
\begin{lemma}\label{lemma:kaplansky}
\cite[Lemma 3]{Kap} Let $E\subset K$ be a pseudo-convergent sequence. If $\alpha\in K$ is a pseudo-limit of $E$, then the set of pseudo-limits of $E$ in $K$ is equal to $\alpha+\Br(E)$. 
\end{lemma}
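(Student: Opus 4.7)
The plan is to prove the two inclusions $\alpha + \Br(E) \subseteq \limiti_E$ and $\limiti_E \subseteq \alpha + \Br(E)$ separately, relying only on the ultrametric inequality and the definitions of pseudo-limit, breadth ideal, and the gauge $\{\delta_n\}$.

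For the inclusion $\alpha + \Br(E) \subseteq \limiti_E$, I would take $b \in \Br(E)$ and show that $\alpha + b$ is a pseudo-limit of $E$, i.e.\ $v((\alpha + b) - s_n) = \delta_n$ for all $n$. Writing $(\alpha + b) - s_n = (\alpha - s_n) + b$, I have $v(\alpha - s_n) = \delta_n$ by hypothesis on $\alpha$, while $v(b) > \delta_n$ by definition of the breadth ideal. Since these two values are distinct, the strong form of the ultrametric inequality gives $v((\alpha + b) - s_n) = \min\{\delta_n, v(b)\} = \delta_n$, as required.

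For the reverse inclusion, let $\beta \in \limiti_E$ and set $b := \beta - \alpha$. I need to show $b \in \Br(E)$, i.e.\ $v(b) > \delta_n$ for every $n$. Writing $b = (\beta - s_{n+1}) - (\alpha - s_{n+1})$, both terms on the right have valuation exactly $\delta_{n+1}$, so the ultrametric inequality yields $v(b) \geq \delta_{n+1} > \delta_n$. This holds for every $n$, so $b \in \Br(E)$ and $\beta = \alpha + b \in \alpha + \Br(E)$.

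There is no real obstacle here: the whole argument amounts to two applications of the ultrametric inequality, and the key trick in the second inclusion is that one must look at the \emph{next} index $n+1$ to pick up the strict inequality $v(b) > \delta_n$ for free from $\delta_n < \delta_{n+1}$. No properties of $V$ beyond the ultrametric law are used, so the proof works at arbitrary rank.
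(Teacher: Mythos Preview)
Your proof is correct. The paper does not actually prove this lemma: it merely states it with a citation to \cite[Lemma 3]{Kap}, so there is no in-paper argument to compare against. Your two-inclusion argument via the ultrametric inequality is precisely the standard elementary proof (and essentially Kaplansky's), including the small index-shift trick to obtain the strict inequality $v(b)>\delta_n$ from $v(b)\geq\delta_{n+1}$.
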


If $w$ is an extension of $v$ to a field $L$ containing $K$, and $E$ is a sequence in $K$, then $E$ is pseudo-convergent with respect to $w$ if and only if $E$ is pseudo-convergent with respect to $v$. Moreover, every pseudo-limit of $E$ under $v$ in $K$ is also a pseudo-limit under $w$. 

Suppose now that $V$ has rank one; then we consider $\Gamma_v$ and $\insQ\Gamma_v$ as totally ordered subgroups of $\insR$. The valuation $v$ induces an ultrametric distance $d$ on $K$, defined by
\begin{equation*}
d(x,y)=e^{-v(x-y)}.
\end{equation*}
In this metric, $V$ is the closed ball of center 0 and radius 1. Given $s\in K$ and $\gamma\in\Gamma_v$, we denote the ball of center $s$ and radius $r=e^{-\gamma}$ by:
$$B(s,r)=\{x\in K \mid d(x,s)\leq r\}=\{x\in K \mid v(x-y)\geq \gamma\}.$$
A ball in $\overline{K}$ with respect to an extension $u$ of $v$ is denoted by $B_u(s,r)$.

If $E\subset K$ is a pseudo-convergent sequence, then the gauge $\{\delta_n\}_{n\in\N}$ of $E$ is a strictly increasing sequence of real numbers, and so the following definition makes sense.
\begin{Def}
The \emph{breadth} of a pseudo-convergent sequence $E=\{s_n\}_{n\in\N}$ is the limit
\begin{equation*}
\delta_E=\lim_{n\to\infty}v(s_{n+1}-s_n)=\lim_{n\to\infty}\delta_n.
\end{equation*}
\end{Def}
The breadth $\delta$ is an element of $\insR\cup\{\infty\}$, and it may not lie in $\Gamma_v$. We can use the breadth to characterize the breadth ideal: indeed, $\Br(E)=\{b\in K\mid v(b)\geq\delta_E\}$, or equivalently $\delta_E=\inf\{v(b)\mid b\in\Br(E)\}$. If $\delta=+\infty$, then $\Br(E)$ is just the zero ideal and $E$ is a Cauchy sequence in $K$. If $V$ is a discrete valuation ring, then every pseudo-convergent sequence is actually a Cauchy sequence. Lemma \ref{lemma:kaplansky} can also be phrased in a geometric way: if $\alpha\in\limiti_E$, then $\limiti_E$ is the closed ball of center $\alpha$ and radius $e^{-\delta_E}$, i.e., $\limiti_E=B(\alpha,e^{-\delta_E})$.

The following concepts have been given by Kaplansky in \cite{Kap} in order to study the different kinds of immediate extensions of a valued field $K$. Recall that  if $L$ is a field extension of $K$, a  valuation domain $W$ of $L$ \emph{lies over} $V$ if $W\cap K=V$. In this case, the residue field of $W$ is naturally an extension of the residue field of $V$ and similarly the value group of $W$ is an extension of the value group of $V$. We say that $W$ is immediate over $V$ if both the residue fields and the value groups are the same.
\begin{Def}
Let $E$ be a pseudo-convergent sequence. We say that $E$ is of \emph{transcendental type} if $v(f(s_n))$ eventually stabilizes for every $f\in K[X]$; on the other hand, if $v(f(s_n))$ is eventually increasing for some $f\in K[X]$, we say that $E$ is of \emph{algebraic type}.
\end{Def}
The main difference between these two kind of sequences is the nature of the pseudo-limits: if $E$ is of algebraic type, then $E$ has pseudo-limits in $\overline{K}$ (for some extension $u$ of $v$), while if $E$ is of transcendental type then $E$ admits a pseudo-limit only in a transcendental extension \cite[Theorems 2 and 3]{Kap}. 

If $L=K(X)$ and $W$ lies over $V$, then $W$ is said to be a \emph{residually transcendental extension} of $V$ (or simply residually transcendental if $V$ is understood) if the residue field of $W$ is a transcendental extension of the residue field of $V$ \cite{APZ1}.

\begin{Def}
Let $\Gamma$ be a totally ordered group containing $\Gamma_v$, and take $\alpha\in K$ and $\delta\in\Gamma$. The \emph{monomial valuation} $v_{\alpha,\delta}$ is defined in the following way: if $f(X)\in K[X]$ is a polynomial, write $f(X)=a_0+a_1(X-\alpha)+\ldots+a_n(X-\alpha)^n$; then,
\begin{equation*}
v_{\alpha,\delta}(f)=\inf\{v(a_i)+i\delta \mid i=0,\ldots,n\}.
\end{equation*}
\end{Def}
It is well known that $v_{\alpha,\delta}$ naturally extends to a valuation on $K(X)$ \cite[Chapt. VI, \S. 10, Lemme 1]{Bourb}, and $v_{\alpha,\delta}$ is residually transcendental over $v$ if and only if  $\delta$ has finite order over $\Gamma_v$ \cite[Lemma 3.5]{PerPrufer}. Furthermore, every residually transcendental extension of $V$ can be written as $W\cap K(X)$, where $W$ is a valuation domain of $\overline{K}(X)$ associated to a monomial valuation \cite{AP,APZ1}.

%
Let $D$ be an integral domain and $L$ be a field containing $D$ (not necessarily the quotient field of $D$). The \emph{Zariski space of $D$ in $L$}, denoted by $\Zar(L|D)$, is the set of valuation domains of $L$ containing $D$ endowed with the so-called \emph{Zariski topology}, i.e., with the topology generated by the subbasic open sets
\begin{equation*}
B(\phi)=\{V\in\Zar(L|D)\mid \phi\in V\},
\end{equation*}
where $\phi\in L$. Under this topology, $\Zar(L|D)$ is a compact space \cite[Chapter VI, Theorem 40]{ZS2} that is almost never Hausdorff nor $T_1$ (indeed, $\Zar(L|D)$ is a $T_1$ space if and only if $D$ is a field and $L$ is an algebraic extension of $D$).

The \emph{constructible topology} (also called \emph{patch topology}) on $\Zar(L|D)$ is the coarsest topology such that the subsets $B(\phi_1,\ldots,\phi_k)$ are both open and closed; we denote this space by $\Zar(L|D)^\cons$. Clearly, the constructible topology is finer than the Zariski topology; however, $\Zar(L|D)^\cons$ is still compact, and furthermore it is always Hausdorff \cite[Theorem 1]{hochster_spectral}.

\section{A valuation domain associated to a pseudo-convergent sequence}\label{sect:VE}

The following valuation domain associated to a pseudo-convergent sequence has been introduced by Loper and Werner in \cite{LW} in the case of a valuation domain $V$ of $K$ of rank one. We generalize their construction to valuation domains of arbitrary rank.
\begin{Def}
Let $E=\{s_n\}_{n\in\N}\subset K$ be a pseudo-convergent sequence. Let
\begin{equation}\label{V_E}
V_{E}=\{\phi\in K(X) \mid \phi(s_n)\in V,\textnormal{ for all but finitely many }n\in\N\}.
\end{equation}
\end{Def}
The aim of this section is to prove that $V_E$ is a valuation domain of $K(X)$ for every pseudo-convergent sequence $E$. When the rank of $V$ is one and $E$ is of transcendental type or has zero breadth ideal, this result was already obtained, respectively, in  Proposition 5.5 and Theorem 5.8 of \cite{LW}. More generally, for any valuation domain, when $E$ is of transcendental type $V_E$ coincides with the valuation domain of $K(X)$ defined by Kaplansky in \cite[Theorem 2]{Kap}, which is an immediate extension of $V$. Since in this case for each $\phi\in K(X)$ we have that $v(\phi(s_n))$ is eventually constant, the value of $\phi$ with respect to the above valuation is equal to that constant value that $\phi(X)$ assumes over $E$. Following this example, our method is heavily based on understanding the values of $\phi(X)$ along a pseudo-convergent sequence.

For the next result, which is not a priori related to pseudo-convergent sequences,  we introduce some notations and definitions.

\begin{Def}\label{weighted sum}
Let $\phi\in K(X)$.  The \emph{multiset of critical points} of $\phi(X)$ is the multiset $\Omega_{\phi}$ of zeroes and poles of $\phi$ in $\overline{K}$ (each of them counted with multiplicity). 
Given a sub-multiset $S=\{\alpha_1,\ldots,\alpha_k\}$ of $\Omega_{\phi}$, by the \emph{weighted sum} of $S$ we mean the sum $\sum_{\alpha_i\in S}\epsilon_i$, where $\epsilon_i$ is equal either to $1$ or to $-1$, according to whether $\alpha_i$ is a zero or a pole of $\phi$, respectively. By the $S$-\emph{part} of $\phi$ we mean the rational function $\phi_S(X)=\prod_{\alpha_i\in S}(X-\alpha_i)^{\epsilon_i}$, where $\epsilon_i\in\{\pm1\}$ is as above. Note that $\phi_{\Omega_{\phi}}(X)$ is equal to $\phi(X)$ up to a constant. 
\end{Def}

Given a convex subset $\Delta$ of $\Gamma_v$, $\beta\in\overline{K}$ and an extension $u$ of $v$ to $\overline K$, we set 
\begin{equation}\label{CbetaDelta}
\corona_u(\beta,\Delta)=\{s\in\overline{K}\mid u(s-\beta)\in\Delta\}
\end{equation}
and, if $\gamma\in\insQ\Gamma_v$, we write $\gamma<\Delta$ ($\gamma>\Delta$, respectively) if $\gamma<\delta$ ($\gamma>\delta$, respectively) for every $\delta\in\Delta$.
\begin{teor}\label{teor:linear}
Let $\phi\in K(X)$ and let $s\in K$; let $u$ be an extension of $v$ to $\overline{K}$. Let $\Delta$ be a convex subset of  $\insQ\Gamma_v$ such that $C=\corona_u(s,\Delta)$ does not contain any critical point of $\phi$. Let $\lambda\inZ$ be equal to the weighted sum of the multiset $S$ of critical points $\alpha$ of $\phi$ which satisfy $u(\alpha-s)>\Delta$ and let $\gamma=u\left(\frac{\phi}{\phi_S}(s)\right)$. Then, for all $t\in C\cap K$, we have
\begin{equation}\label{eq:vphit-lineare}
v(\phi(t))=\lambda v(t-s)+\gamma.
\end{equation}
\end{teor}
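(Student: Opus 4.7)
My plan is to reduce the statement to a direct ultrametric computation after factoring $\phi$ according to its critical points. The key observation is that since $\Delta$ is a convex subset of the totally ordered group $\insQ\Gamma_v$ and the annulus $C=\corona_u(s,\Delta)$ contains no critical point of $\phi$, every $\alpha\in\Omega_\phi$ must satisfy either $u(\alpha-s)>\Delta$ or $u(\alpha-s)<\Delta$. By definition, $S$ is the sub-multiset of critical points satisfying the first condition, so $\Omega_\phi\setminus S$ consists precisely of those satisfying the second.

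Next, I would write $\phi(X)=c\cdot\phi_{\Omega_\phi}(X)=c\cdot\phi_S(X)\cdot\phi_{\Omega_\phi\setminus S}(X)$ for a suitable constant $c\in K^\times$, so that $(\phi/\phi_S)(X)=c\cdot\phi_{\Omega_\phi\setminus S}(X)$ and hence $\gamma = v(c)+\sum_{\alpha\in\Omega_\phi\setminus S}\epsilon_\alpha\,u(s-\alpha)$, where $\epsilon_\alpha\in\{\pm 1\}$ is the sign attached to the critical point $\alpha$ in the weighted product.

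The heart of the proof is then a term-by-term application of the strong triangle inequality for $u$ on $\overline{K}$. Fix $t\in C\cap K$, so that $u(t-s)=v(t-s)\in\Delta$. For $\alpha\in S$, the strict inequality $u(\alpha-s)>u(t-s)$ forces $u(t-\alpha)=u((t-s)-(\alpha-s))=u(t-s)$; for $\alpha\in\Omega_\phi\setminus S$ the inequality flips and $u(t-\alpha)=u(s-\alpha)$. Combining the contributions of the linear factors with the weights $\epsilon_\alpha$, I obtain
\[
v(\phi(t))=v(c)+\sum_{\alpha\in S}\epsilon_\alpha\,u(t-s)+\sum_{\alpha\in\Omega_\phi\setminus S}\epsilon_\alpha\,u(s-\alpha)=\lambda\,v(t-s)+\gamma,
\]
which is exactly \eqref{eq:vphit-lineare}.

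The only genuinely subtle point is justifying the exhaustiveness of the dichotomy on critical points: one has to use the convexity of $\Delta$ inside the totally ordered group $\insQ\Gamma_v$, together with the hypothesis that $C$ contains no critical point, to rule out the intermediate case $u(\alpha-s)\in\Delta$. Once this partition is in place, the rest is routine ultrametric bookkeeping.
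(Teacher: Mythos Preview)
Your proof is correct and follows essentially the same approach as the paper: factor $\phi$ over $\overline{K}$, use convexity of $\Delta$ to split the critical points into those with $u(\alpha-s)>\Delta$ and those with $u(\alpha-s)<\Delta$, and then apply the ultrametric inequality term by term to obtain $v(\phi(t))=\lambda\,v(t-s)+\gamma$. The paper's argument is identical in structure and detail, including the identification of $\gamma$ as $v(c)+\sum_{\alpha\notin S}\epsilon_\alpha\,u(s-\alpha)=u\bigl((\phi/\phi_S)(s)\bigr)$.
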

\begin{proof}
Over $\overline{K}$, we can write $\phi(X)$ as a product $c\prod_{i=1}^n(X-\alpha_i)^{\epsilon_i}$, where the $\alpha_i$ are the critical points of $\phi$, $\epsilon_i\in\{-1,1\}$ and $c\in K$. Let $C=\corona_u(s,\Delta)\subset\overline K$ and let $t\in K\cap C$. If $u(\alpha_i-s)<\Delta$ then $u(t-\alpha_i)=u(s-\alpha_i)$, while if $u(\alpha_i-s)> \Delta$ then $u(t-\alpha_i)=u(s-t)$ (note that, by assumption, there are no other possibilities for the critical points of $\phi(X)$). Therefore, we have
\begin{align*}
v(\phi(t)) & =v(c)+\sum_{i:u(\alpha_i-s)<\Delta}\epsilon_iu(t-\alpha_i)+\sum_{i:u(\alpha_i-s)> \Delta}\epsilon_iu(t-\alpha_i)=\\
& =v(c)+\sum_{i:u(\alpha_i-s)<\Delta}\epsilon_i u(\alpha_i-s)+\sum_{i:u(\alpha_i-s)> \Delta}\epsilon_i u(t-s)=\gamma+\lambda v(t-s)
\end{align*}
where $\lambda=\sum_{i: u(\alpha_i-s)>\Delta}\epsilon_i$ and $\gamma=v(c)+\sum_{i:u(\alpha_i-s)<\Delta}\epsilon_i u(s-\alpha_i)=u\left(\frac{\phi}{\phi_S}(s)\right)$, with $S$ being the multiset of critical points $\alpha_i$ of $\phi$ which satisfy $u(\alpha_i-s)>\Delta$. In particular, $\lambda\in\insZ$ and $\gamma\in\insQ\Gamma_v$ do not depend on $t$. The claim is proved.
\end{proof}

\begin{oss}\label{oss:linear}
Let $\alpha_1,\ldots,\alpha_n$ be the zeros and the poles of $\phi$, and let $\rho_i=u(s-\alpha_i)$; without loss of generality, suppose $\rho_1<\cdots<\rho_n$. Then, the sets $\Delta_i=(\rho_i,\rho_{i+1})$, for $i=0,\ldots,n$ (with the convention $\rho_0=-\infty$ and $\rho_{n+1}=+\infty$) are the maximal convex sets on which Theorem \ref{teor:linear} can be applied: that is, they satisfy (by definition) the hypothesis of the theorem, and if $\Delta_i\subsetneq \Delta$ for some other convex set $\Delta$ then the theorem cannot be applied to $\Delta$.
\end{oss}

In order to apply Theorem \ref{teor:linear} to pseudo-convergent sequences, we need the following definition.

\begin{Def}
Let $E=\{s_n\}_{n\inN}$ be a pseudo-convergent sequence in $K$, let $u$ be an extension of $v$ to $\overline{K}$ and let $\phi\in K(X)$. The \emph{dominating degree} $\degdom_{E,u}(\phi)$ of $\phi$ with respect to $E$ and $u$ is the weighted sum of the critical points of $\phi(X)$ (according to Definition \ref{weighted sum}) which are pseudo-limits of $E$ with respect to $u$.
\end{Def}
Note that, by definition, if $E$ is a pseudo-convergent sequence of transcendental type, then $\degdom_{E,u}(\phi)=0$ for every $\phi\in K(X)$.

The following result shows that the values of a rational function over a pseudo-convergent sequence $E\subset K$ form a sequence that is eventually monotone, either strictly increasing, strictly decreasing or stationary, according to whether the dominating degree of $\phi$ with respect to $E$ is positive, negative or equal to zero,  respectively.
\begin{Prop}\label{prop:qsn}
Let $E=\{s_n\}_{n\inN}\subset K$ be a pseudo-convergent sequence with gauge $\{\delta_n\}_{n\inN}$, and let $u$ be an extension of $v$ to $\overline{K}$. Let $\phi\in K(X)$.
\begin{enumerate}[(a)]
\item\label{prop:qsn:qsn} If $\lambda=\degdom_{E,u}\phi$, then there is $\gamma\in\Gamma_v$ such that, for all sufficiently large $n$, we have
\begin{equation*}
v(\phi(s_n))=\lambda\delta_n+\gamma.
\end{equation*}
\item\label{prop:qsn:alg} If $\beta\in\overline K$ is a pseudo-limit of $E$ with respect to $u$, then $\gamma=u\left(\frac{\phi}{\phi_S}(\beta)\right)$, where $S$ is the set of critical points of $\phi(X)$ which are pseudo-limits of $E$.
\item\label{prop:qsn:ext} The dominating degree of $\phi$ does not depend on $u$; that is, if $u'$ is another extension of $v$ to $\overline{K}$, then $\degdom_{E,u}\phi=\degdom_{E,u'}\phi$.
\end{enumerate}
\end{Prop}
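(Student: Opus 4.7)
The plan is to factor $\phi$ completely over $\overline{K}$ and track how each linear factor contributes when evaluated along $E$. Write $\phi(X)=c\prod_{i=1}^{k}(X-\alpha_i)^{\epsilon_i}$ with $c\in K$, $\alpha_i\in\overline{K}$ and $\epsilon_i\in\{\pm 1\}$, so that $v(\phi(s_n))=v(c)+\sum_i \epsilon_i\, u(s_n-\alpha_i)$; the proposition then reduces to understanding, for each critical point $\alpha_i$, the behaviour of the sequence $\{u(\alpha_i-s_n)\}_{n\in\N}$.

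The key technical input I would establish first is the following dichotomy, valid for every $\alpha\in\overline{K}$: either $u(\alpha-s_n)=\delta_n$ for every $n\in\N$, in which case $\alpha$ is by definition a pseudo-limit of $E$ with respect to $u$; or the sequence $\{u(\alpha-s_n)\}_{n}$ is eventually constant at some value $\gamma_\alpha\in\Gamma_v$ with $\gamma_\alpha<\delta_n$ for $n$ large. This follows from a short ultrametric analysis of the identity $\alpha-s_{n+1}=(\alpha-s_n)-(s_{n+1}-s_n)$, splitting on whether $u(\alpha-s_n)$ is less than, equal to, or greater than $\delta_n=v(s_{n+1}-s_n)$: in the first case the value is already stable, in the third case it becomes $\delta_n$ at the next step and is stable thereafter.

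Apply this dichotomy to each critical point $\alpha_i$ and let $S$ collect those that are pseudo-limits of $E$. For $n$ large enough that $\gamma_{\alpha_j}<\delta_n$ for every $j\notin S$, the factorization above becomes
\begin{equation*}
v(\phi(s_n)) = \Bigl(v(c)+\sum_{j\notin S}\epsilon_j\gamma_{\alpha_j}\Bigr) + \Bigl(\sum_{i\in S}\epsilon_i\Bigr)\delta_n = \gamma+\lambda\delta_n,
\end{equation*}
where $\lambda=\degdom_{E,u}(\phi)$ by definition; since $v(\phi(s_n))$ and $\lambda\delta_n$ both lie in $\Gamma_v$, so does $\gamma$, which proves (a). Part (c) is then immediate: subtracting two consecutive instances gives $\lambda(\delta_{n+1}-\delta_n)=v(\phi(s_{n+1}))-v(\phi(s_n))$, and because $\Gamma_v$ is torsion-free and $\delta_{n+1}>\delta_n$ this determines $\lambda$ uniquely from data that does not involve $u$. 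For (b), if $\beta\in\overline{K}$ is a pseudo-limit of $E$ with respect to $u$, then for each $j\notin S$ the values $u(\beta-s_n)=\delta_n$ and $u(\alpha_j-s_n)=\gamma_{\alpha_j}$ differ for large $n$, so the ultrametric equality forces $u(\beta-\alpha_j)=\gamma_{\alpha_j}$; summing the contributions over $j\notin S$ yields $u\bigl(\tfrac{\phi}{\phi_S}(\beta)\bigr)=v(c)+\sum_{j\notin S}\epsilon_j\gamma_{\alpha_j}=\gamma$.

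The only genuine obstacle is establishing the dichotomy for $\{u(\alpha-s_n)\}_n$; once it is in place, each part reduces to bookkeeping on the factorization of $\phi$ over $\overline{K}$. One could alternatively package (a) by invoking Theorem~\ref{teor:linear} applied to the non-pseudo-limit part of $\phi$ with centre $s=s_N$ and a half-line convex set $\Delta$, but the direct argument has the advantage of avoiding any rational function whose coefficients lie in $\overline{K}\setminus K$.
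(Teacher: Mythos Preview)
Your proof is correct and uses the same core idea as the paper---factor $\phi$ over $\overline{K}$ and apply the ultrametric inequality to each linear factor; the paper packages this as an application of Theorem~\ref{teor:linear} centred at a pseudo-limit $\beta\in\overline{K}$, while you unfold the same computation directly via the dichotomy on each critical point (which has the minor advantage of treating the transcendental case uniformly rather than separately). One small slip that does not affect the argument: the stable value $\gamma_\alpha=u(\alpha-s_n)$ lies in $\insQ\Gamma_v$, not necessarily $\Gamma_v$, since $\alpha\in\overline{K}$; your deduction that $\gamma\in\Gamma_v$ from $\gamma=v(\phi(s_n))-\lambda\delta_n$ remains valid regardless.
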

\begin{proof}
If $E$ is of transcendental type, then $\lambda=0$ and all claims follow from the definition.

Suppose that $E$ is of algebraic type and $\beta\in\mathcal L_E^u$; we will prove \ref{prop:qsn:qsn} and \ref{prop:qsn:alg} together. Let $\Delta=\Delta_E$ be the least initial segment of $\insQ\Gamma_v$ containing the gauge of $E$. There exists $\tau\in\Gamma_v\cap\Delta$ such that $C=\mathcal C_u(\beta,\Delta\cap(\tau,+\infty))$ contains no critical points of $\phi$. Let $\lambda$ be the weighted sum of the subset $S$ of $\Omega_\phi$ of those elements $\alpha$ such that $u(\alpha-\beta)>\Delta\cap(\tau,+\infty)$ (or, equivalently, $u(\alpha-\beta)>\Delta$) and $\gamma=u\left(\frac{\phi}{\phi_S}(\beta)\right)$. For all $n$ sufficiently large $s_n\in C$: by Theorem \ref{teor:linear}, it follows that for each such $n$ we have
\begin{equation*}
v(\phi(s_n))=\lambda u(\beta-s_n)+\gamma=\lambda\delta_n+\gamma.
\end{equation*}
Note that $\gamma\in\Gamma_v$ and, by Lemma \ref{lemma:kaplansky}, $S$ is the set of critical points of $\phi(X)$ which are pseudo-limits of $E$, so $\lambda$  is the dominating degree of $\phi$ with respect to $E$.

For \ref{prop:qsn:ext}, we note that $v(\phi(s_n))$ does not depend on the extension $u$; hence, if $\lambda=\degdom_{E,u}\phi$, $\lambda'=\degdom_{E,u'}\phi$, $\gamma=u\left(\frac{\phi}{\phi_S}(\beta)\right)$, $\gamma'=u'\left(\frac{\phi}{\phi_S}(\beta)\right)$, we have
\begin{equation*}
v(\phi(s_n))=\lambda\delta_n+\gamma=\lambda'\delta_n+\gamma'
\end{equation*}
for all large $n$. However, this clearly implies $\lambda=\lambda'$, as claimed.
\end{proof}

In view of point \ref{prop:qsn:ext} of the previous proposition, we denote the dominating degree of $\phi$ with respect to $E$ simply as $\degdom_E\phi$.

The term \emph{dominating degree} comes from the following property. We remark that, in a different context, a similar argument has been given in the proof of \cite[Proposition 4.8]{ChabPolCloVal}.
\begin{prop}\label{prop:degdom}
Let $E=\{s_n\}_{n\inN}\subset K$ be a pseudo-convergent sequence with pseudo-limit $\beta\in K$, and let $f(X)=\sum_{i=0,\ldots,d} a_i(X-\beta)^i\in K[X]$. Then, $\degdom_Ef$ is the non-negative integer $k$ such that $v(f(s_n))=v(a_k(s_n-\beta)^k)$ for all large $n$.
\end{prop}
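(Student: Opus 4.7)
The plan is to combine Proposition~\ref{prop:qsn}\ref{prop:qsn:qsn} with a direct ultrametric expansion of $f(s_n)$. Since $\beta\in\limiti_E$, we have $v(s_n-\beta)=\delta_n$, so each summand satisfies $v(a_i(s_n-\beta)^i)=v(a_i)+i\delta_n$, and the strong triangle inequality yields
\[
v(f(s_n)) \,\geq\, \min_{0\leq i\leq d}\bigl(v(a_i)+i\delta_n\bigr),
\]
with equality whenever the minimum on the right is uniquely attained.

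First I would show that, for all sufficiently large $n$, this minimum is uniquely attained at an index $k$ that does not depend on $n$. For the \emph{uniqueness} of the minimizer, an equality $v(a_i)+i\delta_n=v(a_{i'})+i'\delta_n$ with $i<i'$ and $a_i,a_{i'}\neq 0$ would force $\delta_n$ to equal the fixed element $(v(a_i)-v(a_{i'}))/(i'-i)\in\insQ\Gamma_v$; since $\{\delta_n\}$ is strictly increasing and there are only finitely many such pairs $(i,i')$, only finitely many $n$ are exceptional. For the \emph{stabilization}, given $k<k'$ with $a_k,a_{k'}\neq 0$, the difference $\bigl(v(a_{k'})+k'\delta_n\bigr)-\bigl(v(a_k)+k\delta_n\bigr)$ is an affine function of $\delta_n$ with strictly positive slope $k'-k$, so once it becomes positive it stays so. Equivalently, past the finitely many exceptional indices, the minimizer $k(n)$ is a non-increasing function of $n$ with values in $\{0,\dots,d\}$, hence it stabilizes to some value $k$. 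Consequently $v(f(s_n))=v(a_k)+k\delta_n$ for all large $n$.

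Finally I would identify $k$ with the dominating degree. By Proposition~\ref{prop:qsn}\ref{prop:qsn:qsn} applied to $\phi=f$, there exists $\gamma\in\Gamma_v$ such that $v(f(s_n))=\lambda\delta_n+\gamma$ for all large $n$, where $\lambda=\degdom_E f$. Equating the two formulas, $v(a_k)+k\delta_n=\lambda\delta_n+\gamma$ holds for infinitely many distinct values of $\delta_n$, which forces $k=\lambda$ (and, as a byproduct, $\gamma=v(a_k)$). The only delicate step is the stabilization of the minimizer; the monotonicity argument sketched above works uniformly, without any assumption on the rank of $V$ or on whether $\Br(E)=0$.
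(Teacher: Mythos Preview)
Your proof is correct and follows essentially the same strategy as the paper: show that among the linear functions $\lambda_i(\eta)=v(a_i)+i\eta$ there is a unique one, say $\lambda_r$, that is strictly smallest for all $\delta_n$ with $n$ large, deduce $v(f(s_n))=r\delta_n+v(a_r)$ via the ultrametric inequality, and then compare with the expression $v(f(s_n))=\lambda\delta_n+\gamma$ from Proposition~\ref{prop:qsn}\ref{prop:qsn:qsn} to conclude $r=\lambda$. The only difference is cosmetic: the paper phrases the stabilization in terms of an initial segment $\Delta$ of $\Gamma_v$ and asserts the existence of $\tau\in\Delta$ and $r$ with $\lambda_r(\eta)<\lambda_i(\eta)$ on $\Delta\cap(\tau,+\infty)$, whereas you supply the explicit monotonicity argument (the minimizer $k(n)$ is eventually unique and non-increasing, hence stabilizes).
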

\begin{proof}
Clearly, if $v(f(s_n))=v(a_k(s_n-\beta)^k)$ for all large $n$ then $v(f(s_n))=k\delta_n+v(a_k)$ and so $k=\degdom_Ef$.

Conversely, suppose $k=\degdom_Ef$. Then, by definition, $v(f(s_n))=k\delta_n+\gamma$ for some $\gamma\in\Gamma_v$ (for all large $n$), where $\{\delta_n\}_{n\inN}$ is the gauge of $E$.  We consider the following linear functions from $\Gamma_v$ to $\Gamma_v$:
\begin{equation*}
\lambda_i(\eta)=i\eta+v(a_i),\;i\in\{0,\ldots,d\}.
\end{equation*}
Let $\Delta$ be the least initial segment of $\Gamma_v$ containing the gauge of $E$: then, since the $\lambda_i$ are linear, there is a $\tau\in\Delta$ and an $r\in\{0,\ldots,d\}$ such that $\lambda_r(\eta)<\lambda_i(\eta)$ for all $\eta\in\Delta\cap(\tau,+\infty)$ and for all $i\not=r$. Therefore, whenever $\delta_n\in\Delta\cap(\tau,+\infty)$ we must have
\begin{equation*}
v(f(s_n))=v\left(\sum_ia_i(s_n-\beta)^i\right)=\inf_i\{v(a_i(s_n-\beta)^i)\}= r\delta_n+v(a_r).
\end{equation*}
In particular, it must be $r=k$, as claimed.
\end{proof}
%

\begin{Thm}\label{Thm:VE valuation domain}
Let $E=\{s_n\}_{n\in\N}$ be a pseudo-convergent sequence. Then $V_E\subset K(X)$ is a valuation domain lying over $V$ with maximal ideal equal to $M_{V_E}=\{\phi\in K(X)\mid v(\phi(s_n))\in M,\textnormal{ for all but finitely many }n\in\N\}$. Moreover, $X$ is a pseudo-limit of $E$ with respect to the valuation $v_E$ associated to $V_E$.
\end{Thm}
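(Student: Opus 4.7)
The plan is to verify directly, from the definition of $V_E$ and from Proposition \ref{prop:qsn}, that $V_E$ is a valuation ring of $K(X)$. The main input is the following consequence of Proposition \ref{prop:qsn}: for every nonzero $\phi\in K(X)$, there exist $\lambda=\degdom_E\phi\in\Z$ and $\gamma\in\Gamma_v$ such that $v(\phi(s_n))=\lambda\delta_n+\gamma$ for all sufficiently large $n$. Since $\{\delta_n\}$ is strictly increasing, the sequence $\{v(\phi(s_n))\}_n$ is eventually strictly increasing if $\lambda>0$, strictly decreasing if $\lambda<0$, or eventually constant if $\lambda=0$.

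From this I extract the fundamental dichotomy: for every $\phi\in K(X)^*$, exactly one of the following holds --- either $v(\phi(s_n))\geq 0$ for all large $n$, or $v(\phi(s_n))<0$ for all large $n$. Indeed, a strictly monotone (or constant) sequence in a totally ordered abelian group changes sign at most once, so having infinitely many negative terms forces all sufficiently large terms to be negative, and symmetrically for non-negative terms. This dichotomy yields the valuation property at once: if $\phi\notin V_E$ then $v(\phi(s_n))<0$ eventually, whence $v(\phi^{-1}(s_n))>0$ eventually and $\phi^{-1}\in V_E$ (in fact $\phi^{-1}\in M_{V_E}$).

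The ring axioms reduce to the standard inequalities $v((\phi+\psi)(s_n))\geq\min\{v(\phi(s_n)),v(\psi(s_n))\}$ and $v((\phi\psi)(s_n))=v(\phi(s_n))+v(\psi(s_n))$, which preserve the condition ``$\geq 0$ for all large $n$''; one must also note that, since a rational function has only finitely many poles and the $s_n$ are pairwise distinct, $\phi(s_n)$ is defined for all large $n$. The lying-over statement is immediate because a constant $c\in K$ satisfies $v(c(s_n))=v(c)$ independently of $n$, so $V_E\cap K=V$; and the maximal ideal description follows from the dichotomy applied to $\phi^{-1}$: $\phi\in M_{V_E}$ iff $\phi^{-1}\notin V_E$ iff $v(\phi(s_n))>0$ for all large $n$.

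For the last claim, I would evaluate $(X-s_n)/(X-s_{n+1})$ at $s_k$ with $k>n+1$: the valuation is $v(s_k-s_n)-v(s_k-s_{n+1})=\delta_n-\delta_{n+1}<0$, so this rational function lies outside $V_E$, giving $v_E(X-s_n)<v_E(X-s_{n+1})$ for every $n$; this is exactly the condition that $X$ be a pseudo-limit of $E$ with respect to $v_E$. The only nontrivial step is the dichotomy, but it follows directly from Proposition \ref{prop:qsn} together with the elementary monotonicity argument above, so I do not expect a serious obstacle.
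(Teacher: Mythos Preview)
Your proof is correct and follows essentially the same approach as the paper: both rely on Proposition~\ref{prop:qsn} to obtain the eventual linearity $v(\phi(s_n))=\lambda\delta_n+\gamma$, deduce the dichotomy that $v(\phi(s_n))$ is eventually of constant sign, and verify the pseudo-limit claim via the same rational function (up to taking the reciprocal). The only cosmetic difference is that the paper singles out the transcendental case by citing Kaplansky's \cite[Theorem~2]{Kap} directly, whereas you treat both cases uniformly through Proposition~\ref{prop:qsn}; since that proposition already subsumes the transcendental case (with $\lambda=0$), your streamlining is harmless.
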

\begin{proof}
Clearly, $V_E$ is a ring and $V_E\cap K=V$.

If $E$ is of transcendental type then $V_E$ is exactly the valuation domain of the immediate extension of the valuation $v$ to $K(X)$ induced by $E$ as in \cite[Theorem 2]{Kap}. We have that $X$ is a pseudo-limit of $E$ by \cite[Theorem 2]{Kap}.

Suppose now that $E$ is of algebraic type, and let $\phi\in K(X)$. By Proposition \ref{prop:qsn}, $v(\phi(s_n))$ is a linear function of $\delta_n$; hence, it is either eventually positive, eventually zero or eventually negative. Since $v(\phi^{-1}(s_n))=-v(\phi(s_n))$ (provided that $\phi(s_n)\neq 0$, which happens only finitely many times), we have that $\phi\in V_E$, in the first and second case, while in the third case $\phi^{-1}\in V_E$. Hence, $V_E$ is a valuation domain, and the claim about the maximal ideal follows easily.



Finally, we show that $X$ is a pseudo-limit of $E$ with respect to  $v_E$. Fix $n\inN$, and let $\phi(X)=\frac{X-s_{n+1}}{X-s_n}$. Then, for $m>n+1$ we have $v(\phi(s_m))=\delta_{n+1}-\delta_n>0$, and thus $v_E(X-s_{n+1})>v_E(X-s_n)$. It follows that $X$ is a pseudo-limit of $E$, as claimed.
\end{proof}

We want now to link the valuation domain $V_E$ to another class of valuation domains, which for example have been recently considered in \cite{PerTransc}.

\begin{Def}
Let $\alpha\in\widehat{K}$. We denote by $W_{\alpha}$ the ring of rational functions over $K$ which are integer-valued over $\alpha$, namely:
$$W_{\alpha}=\{\phi\in K(X) \mid v(\phi(\alpha))\geq0\}$$
\end{Def}
It is straightforward to verify that $W_{\alpha}$ is a valuation domain of $K(X)$ which lies over $V$ (see also \cite[Proposition 2.2]{PerTransc}).

\begin{Rem}\label{remark:BrE=0}
In case $E$ is a pseudo-convergent sequence with zero breadth ideal, and $\alpha\in \widehat K$ is the (unique) limit of $E$, since rational functions are continuous in the topology induced by $v$, we have $V_E=W_{\alpha}$.
 Moreover, $\alpha$ is algebraic (transcendental, respectively) over $K$ if and only if $E$ is of algebraic (transcendental, respectively) type.  These kind of valuations domains have been characterized in \cite[Proposition 2.2]{PerTransc}. We will deal with the case of non-zero breadth ideal in Theorem \ref{teor:rank-VE}.
\end{Rem}

We conclude this section by describing the valuation $v_E$, its residue field and its value group when $E$ is a pseudo-convergent sequence of algebraic type.
\begin{Prop}\label{description v_E}
Let $E=\{s_n\}_{n\in\N}\subset K$ be a pseudo-convergent sequence and suppose that $\beta\in K$ is a pseudo-limit of $E$; let $\alpha\in K$. Then, the following hold.
\begin{enumerate}[(a)]
\item $v_{E}(X-\alpha)\leq v_{E}(X-\beta)$ and equality holds if and only if $\alpha\in\limiti_E$.
\end{enumerate}
Let $\vEXbeta_E=v_{E}(X-\beta)\in\Gamma_{v_E}$ (which by above does not depend on the  choice of the pseudo-limit $\beta$ of $E$).
\begin{enumerate}[(a),resume]
\item $\vEXbeta_{E}$ is not a torsion element in $\Gamma_{v_E}/\Gamma_v$ (i.e., if $k\in\N$ is such that $k\cdot \vEXbeta_{E}\in\Gamma_v$, then $k=0$). In particular, $v_E=v_{\beta,\Delta_{E}}$.
\item $\Gamma_{v_E}=\Z\vEXbeta_{E}\oplus\Gamma_v$  (as groups).
\item $V_E/M_E\cong V/M$.
\end{enumerate}
\end{Prop}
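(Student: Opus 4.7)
The plan is to handle (a)--(d) in sequence; the heart of the argument is the non-torsion claim in (b), after which everything else will follow from standard properties of monomial valuations.

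For (a), I would directly compute $v_E(X-\alpha)-v_E(X-\beta)=v_E((X-\alpha)/(X-\beta))$ by looking at the eventual behaviour of $v((s_n-\alpha)/(s_n-\beta))$ in $K$. Lemma \ref{lemma:kaplansky} says $\alpha\in\limiti_E$ iff $\alpha-\beta\in\Br(E)$, equivalently $v(\alpha-\beta)>\delta_n$ for every $n$: in that case the ultrametric inequality forces $v(s_n-\alpha)=v(s_n-\beta)=\delta_n$, so the ratio has $v$-value $0$ eventually and $v_E(X-\alpha)=v_E(X-\beta)$. Otherwise $v(\alpha-\beta)<\delta_n$ eventually, $v(s_n-\alpha)=v(\alpha-\beta)$ is constant while $v(s_n-\beta)=\delta_n$ strictly increases, so the ratio is eventually negative and $v_E(X-\alpha)<v_E(X-\beta)$.

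For (b), suppose toward a contradiction that $k\vEXbeta_E=v(c)$ for some integer $k\geq1$ and $c\in K^\ast$; then $\phi=(X-\beta)^k/c$ would satisfy $v_E(\phi)=0$. But $v(\phi(s_n))=k\delta_n-v(c)$ is strictly monotone in $\Gamma_v$, so it crosses $0$ at most once, and is either eventually positive (giving $\phi\in M_E$, hence $v_E(\phi)>0$) or negative for all large $n$ (giving $\phi\notin V_E$, hence $v_E(\phi)<0$), contradicting $v_E(\phi)=0$. Once non-torsion is known, the identification $v_E=v_{\beta,\vEXbeta_E}$ follows automatically: for $f=\sum_i a_i(X-\beta)^i$ the triangle inequality gives $v_E(f)\geq\inf_i\{v(a_i)+i\vEXbeta_E\}=v_{\beta,\vEXbeta_E}(f)$, and non-torsion makes the terms $v(a_i)+i\vEXbeta_E$ pairwise distinct in $\Gamma_{v_E}$, so the infimum is uniquely attained and equality is forced, first on $K[X]$ and then on $K(X)$.

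Items (c) and (d) are then essentially formal. For (c): non-torsion gives $\mathbb{Z}\vEXbeta_E\cap\Gamma_v=\{0\}$, so $\mathbb{Z}\vEXbeta_E+\Gamma_v$ is an internal direct sum, and by (b) this subgroup is all of $\Gamma_{v_E}$. For (d): given $\phi=f/g\in V_E$ with $v_E(\phi)=0$, expand $f,g$ in powers of $X-\beta$; the direct-sum decomposition in (c) combined with the explicit formula in (b) forces $\degdom_E f=\degdom_E g=:k$ and $v(a_k)=v(b_k)$ for the coefficients of $(X-\beta)^k$ in $f$ and $g$, and a short calculation then yields $\phi-a_k/b_k\in M_E$, so the natural injection $V/M\hookrightarrow V_E/M_E$ (injective because $V_E\cap K=V$) is surjective. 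The main obstacle is the non-torsion assertion in (b): the strict monotonicity of the gauge $\{\delta_n\}$ is precisely what obstructs $k\vEXbeta_E$ from landing in $\Gamma_v$, and one must use it to rule out both sign regimes of $k\delta_n-v(c)$ at once; the rest is bookkeeping with Proposition \ref{prop:degdom} and standard monomial-valuation theory.
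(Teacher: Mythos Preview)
Your proposal is correct and follows essentially the same route as the paper. Part (a) is identical up to working with the reciprocal ratio; part (b) is the same contradiction (strict monotonicity of $k\delta_n-v(c)$ forbids $(X-\beta)^k/c$ from being a unit of $V_E$). The only real difference is that for the identification $v_E=v_{\beta,\Delta_E}$ and for (c)--(d) the paper simply invokes \cite[Chapt.~VI, \S 10, Prop.~1]{Bourb}, whereas you unpack those standard monomial-valuation facts by hand; your explicit residue computation in (d) is a fine substitute for that citation.
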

\begin{proof}
The condition $v_{E}(X-\alpha)\leq v_{E}(X-\beta)$ is equivalent to $\phi(X)=\frac{X-\beta}{X-\alpha}\in V_{E}\Leftrightarrow\phi(s_n)\in V$, for almost all $n\in\N$. For each $n\in\N$, we have:
$$v(\phi(s_n))=v(s_n-\beta)-v(s_n-\alpha)$$
Now, we write $v(s_n-\alpha)=v(s_n-\beta+\beta-\alpha)$. Note that $\beta-\alpha\in\Br(E)\Leftrightarrow \alpha\in\mathcal{L}_E$ (Lemma \ref{lemma:kaplansky}). If these conditions hold, then $v(\beta-\alpha)>v(s_{n+1}-s_n)=v(s_n-\beta)$ for each $n\in\N$ and therefore $v(s_n-\alpha)=v(s_n-\beta)$. Note that in this case $\phi\in V_{E}^*$ and so, in particular, $\vEXbeta_E=v_E(X-\beta)$ does not depend on the pseudo-limit $\beta$ of $E$ we have chosen (in $K$). If instead $\alpha\not\in\mathcal{L}$ then there exists $N\in\N$ such that $v(\beta-\alpha)<v(s_{n+1}-s_n)=v(\beta-s_n)$ for all $n\geq N$. Hence, $v(s_n-\alpha)=v(\beta-\alpha)<v(\beta-s_n)$ for all $n\geq N$, that is, $\phi\in M_{E}\subset V_{E}$. 

We prove now the other three claims. Suppose there exists $k\in\N$ such that $k\cdot \vEXbeta_E\in\Gamma_v$, that is, $k\cdot v_E(X-\beta)=v_E((X-\beta)^k)=v(a)$, for some $a\in K$. This implies that $\frac{(X-\beta)^k}{a}\in V_E^*$, which is a contradiction, since $k\cdot v(s_n-\beta)-v(a)$ is strictly increasing.

Since $\vEXbeta_E=v_E(X-\beta)\in\Gamma_{v_E}$ is not torsion over $\Gamma_v$, by \cite[Chapt. VI, \S 10, Proposition 1]{Bourb} (see also \cite[p. 289]{APZ3}) we have that for each $f\in K[X]$, $f(X)=a_0+a_1(X-\beta)+\ldots+a_n(X-\beta)^n$, 
$$v_E(f(X))=\inf\{v(a_i)+i\vEXbeta_{ E} \mid i=0,\ldots,n\}$$
(where the $\inf$ is in $\Gamma_{v_E}$). In fact, we have $v_E(a_i(X-\beta)^i)\not=v_E(a_j(X-\beta)^j)$, for all $i\not=j\in\{0,\ldots,n\}$, otherwise $(i-j)\vEXbeta_{ E}=v(a_j)-v(a_i)$ and $\vEXbeta_{E}$ would be torsion over $\Gamma_v$. This implies that $v_E=v_{\beta,\vEXbeta_{E}}$. Moreover, by the same reference, $\Gamma_{v_E}=\Z\vEXbeta_{E}\oplus\Gamma_v$ and the residue field of $V_E$ is isomorphic to the residue field of $V$.
\end{proof} 

In the general case, where $E$ is algebraic but has no pseudo-limits in $K$, we only need to pass to an extension of $V$.
\begin{Cor}
Let $E\subset K$ be a pseudo-convergent sequence of algebraic type, and let $u$ be an extension of $v$ to $\overline{K}$. Let $\beta\in\overline K$ be a pseudo-limit of $E$ with respect to $u$, and let $\vEXbeta=u_E(X-\beta)$. Then, $v_E$ is equal to the restriction to $K(X)$ of $u_E=u_{\beta,\vEXbeta}$.
\end{Cor}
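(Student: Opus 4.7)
The plan is to apply Proposition \ref{description v_E} after passing from $K$ to $\overline{K}$, then descend by restriction. Since $E\subset K$ and $u$ extends $v$, $E$ remains a pseudo-convergent sequence (of algebraic type) with respect to $u$, and now the pseudo-limit $\beta$ lies in the ground field $\overline{K}$. First I would apply Theorem \ref{Thm:VE valuation domain} to $E$ viewed over the valued field $(\overline{K},u)$: this produces a valuation domain $\overline{V}_E\subset\overline{K}(X)$, lying over the valuation ring of $u$, whose associated valuation is precisely $u_E$. Then, because $\beta$ is a pseudo-limit of $E$ already in $\overline{K}$, Proposition \ref{description v_E}, read over the field $\overline{K}$ and the valuation $u$, applies verbatim and yields $u_E=u_{\beta,\vEXbeta}$ with $\vEXbeta=u_E(X-\beta)$.

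It then remains to identify $v_E$ with the restriction of $u_E$ to $K(X)$. I would do this at the level of valuation rings, proving $V_E=\overline{V}_E\cap K(X)$. For any $\phi\in K(X)$ the values $\phi(s_n)$ lie in $K$, and since the valuation ring of $u$ meets $K$ in $V$, the condition ``$\phi(s_n)\in V$ for almost all $n$'' is literally the same as ``$\phi(s_n)$ is in the valuation ring of $u$ for almost all $n$''. Hence $V_E=\overline{V}_E\cap K(X)$, so the two associated valuations coincide on $K(X)$, and combined with the previous step we obtain $v_E=u_{\beta,\vEXbeta}|_{K(X)}$.

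The only point requiring care, rather than a real obstacle, is to verify that Proposition \ref{description v_E} transfers verbatim to $(\overline{K},u)$: its proof uses only Lemma \ref{lemma:kaplansky} and the monomial-valuation description from \cite[Chapt. VI, \S 10, Proposition 1]{Bourb}, both of which are insensitive to enlarging the base field. Once this compatibility is noted, the argument reduces to the bookkeeping outlined above.
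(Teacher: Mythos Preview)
Your proposal is correct and follows essentially the same approach as the paper: apply Proposition \ref{description v_E} over $(\overline{K},u)$ to obtain $u_E=u_{\beta,\vEXbeta}$, then restrict to $K(X)$ via the identity $V_E=\overline{V}_E\cap K(X)$. Your justification of this last identity (using that the valuation ring of $u$ meets $K$ in $V$) is actually more explicit than the paper's, which simply asserts it.
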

\begin{proof}
Let $U_E$ be the valuation domain of $\overline{K}(X)$ associated to $E$ with respect to $u$. By Proposition \ref{description v_E}, $u_E=u_{\beta,\vEXbeta}$, where $\vEXbeta=u_E(X-\beta)$. Since $U_E\cap K(X)=V_E$, the claim follows immediately.
\end{proof}

\section{The rank of $V_E$ and the Ostrowski valuation $w_E$}\label{sect:WE}

We assume for the rest of the article that $V$ has rank one.

If $W$ is an extension of $V$ to $K(X)$, then the rank of $W$ is either $1$ or $2$ (\cite[Chapitre VI, \S 10, Corollaire 1, p. 162]{Bourb}). In this section, we want to determine when each of the two possibilities occurs for $W=V_E$, where $E$ is a pseudo-convergent sequence. To this end, we need to introduce another kind of valuation on $K(X)$ which lies over $V$; also this valuation arise from pseudo-convergent sequences and have been first introduced and studied by Ostrowski in \cite[65. p. 374]{Ostr}. 

\begin{Def}
Let $E=\{s_n\}_{n\inN}\subset K$ be a pseudo-convergent sequence. We define $w_E$ as the map
\begin{equation*}
\begin{aligned}
w_E\colon K(X) & \longrightarrow \insR\cup\{\pm\infty\}\\
\phi & \longmapsto \lim_{n\to\infty}v(\phi(s_n)).
\end{aligned}
\end{equation*}
If $w_E$ happens to define a valuation on $K(X)$, we denote by $W_E$ the associated valuation domain, namely, $W_E=\{\phi\in K(X)\mid w_E(\phi)\geq0\}$.
\end{Def}
Note that, for large $n$, $s_n$ is neither a zero nor a pole of $\phi$, so $v(\phi(s_n))$ is defined for all large $n$. We are going to show under which cases $w_E$ is a valuation on $K(X)$.

One of the main accomplishments of Ostrowski (and also the motivation for the introduction of the notion of pseudo-convergent sequence) in his work \cite{Ostr} is the \emph{Fundamentalsatz}, which we now recall.
\begin{teor}
\cite[66. IX, p. 378]{Ostr} Let $K$ be an algebraically closed field and let $v$ be a rank one valuation on $K$. If $w$ is a rank one valuation of $K(X)$ extending $v$, then there is a pseudo-convergent sequence $E=\{s_n\}_{n\inN}\subset\overline{K}$ such that $w=w_E$.
\end{teor}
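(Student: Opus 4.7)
The plan is to construct a pseudo-convergent sequence $\{s_n\}\subset K$ along which $w(X-s_n)$ strictly increases to $\sigma:=\sup\{w(X-a):a\in K\}\in\insR\cup\{+\infty\}$, and then verify $w=w_E$ by factoring arbitrary $\phi\in K(X)$ into linear pieces. Since $K$ is algebraically closed, $\Gamma_v$ is divisible (for every $a\in K$ and $n\inN$ one has $v(a)\in n\Gamma_v$), and being of rank one it is therefore dense in $\insR$. Moreover, since $w$ has rank one and $X-a\neq 0$ for every $a\in K$, each value $w(X-a)$ is a finite real number.

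First I would choose $\gamma_n\in\Gamma_v$ with $\gamma_n\nearrow\sigma$ (any unbounded increasing sequence in $\Gamma_v$ will do if $\sigma=+\infty$). For each $n$, pick $\alpha_n\in K$ with $w(X-\alpha_n)>\gamma_n$ (possible since $\gamma_n<\sigma$) and $u_n\in K$ with $v(u_n)=\gamma_n$, and set $s_n:=\alpha_n+u_n$. Applying the ultrametric to $X-s_n=(X-\alpha_n)-u_n$, we obtain $w(X-s_n)=\min\{w(X-\alpha_n),\gamma_n\}=\gamma_n$. For $m>n$, the element $s_m-s_n\in K$ satisfies
\begin{equation*}
v(s_m-s_n)=w\bigl((X-s_n)-(X-s_m)\bigr)=\min\{\gamma_n,\gamma_m\}=\gamma_n,
\end{equation*}
which is strictly increasing in $n$; hence $\{s_n\}$ is a pseudo-convergent sequence with gauge $\{\gamma_n\}$ and breadth $\sigma$.

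It remains to check that $w_E(\phi)=w(\phi)$ for every $\phi\in K(X)$, which will simultaneously show that the limit defining $w_E$ exists. Factoring $\phi(X)=c\prod_{i=1}^k(X-\alpha_i)^{\epsilon_i}$ (possible since $K$ is algebraically closed), by linearity it suffices to prove $\lim_n v(s_n-a)=w(X-a)$ for every $a\in K$. If $w(X-a)<\sigma$, then eventually $\gamma_n>w(X-a)$, so $v(s_n-a)=\min\{w(X-a),\gamma_n\}=w(X-a)$ is eventually constant. If instead $w(X-a)=\sigma$ (which forces $\sigma<+\infty$ to be attained), then $v(s_n-a)=\min\{\sigma,\gamma_n\}=\gamma_n\nearrow\sigma=w(X-a)$. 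Either way the limit equals $w(X-a)$, and summing over the finite factorization yields $\lim v(\phi(s_n))=w(\phi)$. The main technical point is ensuring that the approximation $\gamma_n\nearrow\sigma$ from strictly below can be realized inside $\Gamma_v$: this is precisely where the hypothesis that $K$ is algebraically closed (forcing divisibility, and hence density of $\Gamma_v$ in $\insR$) is used decisively, since otherwise $\sigma$ could be an element of $\insR\setminus\Gamma_v$ cofinally isolated from $\Gamma_v\cap(-\infty,\sigma)$.
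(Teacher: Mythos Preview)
Your argument is correct. The paper does not supply its own proof of this statement; it is quoted as Ostrowski's \emph{Fundamentalsatz} with a citation, so there is nothing to compare your approach against on the paper's side.

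Your construction is the natural one and essentially Ostrowski's: set $\sigma=\sup_{a\in K}w(X-a)$, force $w(X-s_n)$ to hit a prescribed strictly increasing sequence $\gamma_n\nearrow\sigma$ in $\Gamma_v$ by translating a good approximant $\alpha_n$ by an element of value exactly $\gamma_n$, and then read off $w=w_E$ on linear factors. The ultrametric computations you give are all sound; in particular, in the attained case $w(X-a)=\sigma$ you correctly get $v(s_n-a)=\gamma_n$ because $w(X-s_n)=\gamma_n<\sigma$ are distinct, so the strict form of the ultrametric inequality applies. One small remark: your proof also implicitly shows that the resulting $w_E$ is a genuine valuation (not merely the pseudo-valuation of Proposition~\ref{prop:pseudoval}), since when $\sigma=+\infty$ no $a\in K$ can be a pseudo-limit of $E$ (you showed $v(s_n-a)$ is eventually constant), and $K=\overline K$ forces $E$ to be of transcendental type; when $\sigma<+\infty$ the breadth ideal is nonzero. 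It may be worth stating this explicitly.
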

When $K$ is not algebraically closed, this means that the rank one valuations of $K(X)$ extending $v$ can be realized as the contraction to $K(X)$ of the valuations $w_E$ on $\overline K(X)$ for some pseudo-convergent sequence $E\subset\overline{K}$ and some extension of $v$ to $\overline K$.

For the sake of completeness, in the next two propositions we prove the basic properties of the function $w_E$.
\begin{prop}\label{prop:limsup}
Let $E=\{s_n\}_{n\inN}$ be a pseudo-convergent sequence  that is either of transcendental type or of algebraic type and non-zero breadth ideal. Then the map $w_E:K(X)\to \R\cup\{\infty\}$  extends $v$ and is a valuation of rank one on $K(X)$. Furthermore, the valuation ring $W_E$ relative to $w_E$ contains $V_E$.
\end{prop}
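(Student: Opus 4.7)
The strategy is to leverage Proposition \ref{prop:qsn} directly: for every $\phi\in K(X)^*$, the sequence $v(\phi(s_n))$ is eventually given by the affine expression $\lambda\delta_n+\gamma$, where $\lambda=\degdom_E\phi\in\Z$ and $\gamma\in\Gamma_v$. Since the gauge satisfies $\delta_n\nearrow\delta_E\in\R\cup\{+\infty\}$, the limit defining $w_E(\phi)$ automatically exists in $\R\cup\{\pm\infty\}$ and equals $\lambda\delta_E+\gamma$. (For all sufficiently large $n$, $s_n$ avoids the finite set of critical points of $\phi$, so $\phi(s_n)$ is a well-defined nonzero element of $K$.)

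The next step is to show that under the hypotheses the limit is a real number, so that $w_E$ is finite on $K(X)^*$. If $E$ is of transcendental type, then as noted right after the definition of $\degdom$, $\lambda=0$ for every $\phi$, so $w_E(\phi)=\gamma\in\Gamma_v\subset\R$. If $E$ is of algebraic type with $\Br(E)\neq(0)$, then by the discussion preceding Proposition \ref{description v_E} we have $\delta_E<+\infty$, so $\lambda\delta_E+\gamma$ is a real number. In both cases $w_E$ takes values in $\R$ on nonzero rational functions and sends $0$ to $+\infty$.

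Then I would verify the valuation axioms by passing to the limit. Multiplicativity $w_E(\phi\psi)=w_E(\phi)+w_E(\psi)$ follows from $v(\phi(s_n)\psi(s_n))=v(\phi(s_n))+v(\psi(s_n))$ for all large $n$; both terms converge in $\R$ by the previous step, so the sum converges to the sum of limits. For the ultrametric inequality, $v((\phi+\psi)(s_n))\geq\min\{v(\phi(s_n)),v(\psi(s_n))\}$ holds pointwise for all large $n$; taking limits (which all exist thanks to the affine formula) yields $w_E(\phi+\psi)\geq\min\{w_E(\phi),w_E(\psi)\}$. That $w_E$ extends $v$ is immediate: for $c\in K^*$, $v(c(s_n))=v(c)$ is constant. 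The rank-one claim follows because $w_E(K(X)^*)\subseteq\R$, which is Archimedean, hence has no nontrivial convex subgroups. Finally, $V_E\subseteq W_E$ is immediate: if $\phi\in V_E$, then $v(\phi(s_n))\geq 0$ for all but finitely many $n$, so $w_E(\phi)=\lim v(\phi(s_n))\geq 0$.

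The only nontrivial input is Proposition \ref{prop:qsn}, which handles the existence and computability of the limit; once one knows $v(\phi(s_n))$ is eventually affine in $\delta_n$, everything else (valuation axioms, rank, extension, containment) reduces to taking limits in $\R$. The main conceptual obstacle is recognizing that the transcendental case and the algebraic case with nonzero breadth ideal are exactly the two situations that prevent $w_E(\phi)$ from being $-\infty$: the former because $\lambda=0$, the latter because $\delta_E$ is finite.
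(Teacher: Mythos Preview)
Your proof is correct and follows essentially the same approach as the paper: both arguments invoke Proposition~\ref{prop:qsn} to obtain the eventual affine formula $v(\phi(s_n))=\lambda\delta_n+\gamma$, then split into the transcendental case ($\lambda=0$) and the algebraic case with finite breadth ($\delta_E<\infty$) to conclude the limit is real, and finally deduce the valuation axioms, the extension property, and the containment $V_E\subseteq W_E$ by passing to limits. The only cosmetic difference is that the paper handles the transcendental case directly from the definition rather than through Proposition~\ref{prop:qsn}, but this is not a substantive distinction.
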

\begin{proof}
Suppose first that $E$ is of transcendental type. Then for each $\phi\in K(X)$, $v(\phi(s_n))$ is eventually constant, and furthermore $w_E(\phi)=\infty$ if and only if $\phi=0$. 

Suppose now that $E$ is of algebraic type and the breadth ideal is non-zero. Then also in this case $w_E$ is well-defined, since by Proposition \ref{prop:qsn} for every $\phi\in K(X)$ there is a $k\inZ$ and $\gamma\in\Gamma_v$ such that $v(\phi(s_n))=k\delta_n+\gamma$, and $\delta_n\to\delta$ as $n\to\infty$. Moreover, since $\delta<\infty$, and since $\phi$ has only finitely many zeros and points where it is not defined, we have $w_E(\phi)=\infty$ if and only if $\phi=0$. 

In either case, if $\phi=a\in K$ is a constant, then $w_E(\phi)=v(a)$; thus, $w_E$ extends $v$.

If now $\phi_1,\phi_2\in K(X)$ then
\begin{equation*}
v((\phi_1+\phi_2)(s_n))=v(\phi_1(s_n)+\phi_2(s_n))\geq\min\{v(\phi_1(s_n)),v(\phi_2(s_n))\};
\end{equation*}
hence, $w_E(\phi_1+\phi_2)\geq\min\{w_E(\phi_1),w_E(\phi_2)\}$. In the same way, $w_E(\phi_1\phi_2)=w_E(\phi_1)+w_E(\phi_2)$. Hence, $w_E$ is a valuation.

If now $\phi\in V_E$, then $\phi(s_n)\in V$ for large $n$, or equivalently $v(\phi(s_n))\geq 0$ for large $n$. In particular, $\lim v(\phi(s_n))\geq 0$, i.e., $w_E(\phi)\geq 0$. Therefore, $\phi\in W_E$.
\end{proof}

If $E$ is of algebraic type and its breadth ideal is zero, on the other hand, $w_E$ is not a valuation: this is due to the fact that $w_E(\phi)$ tends to $\infty$ when the pseudo-limit of $E$ (in $\overline{K}$) is a zero of $\phi$. It is, however, very close to a valuation: recall that a \emph{pseudo-valuation} of a field $K$ is a map $v$ from $K$ to $\Gamma_v\cup\{\infty\}$, where $\Gamma_v$ is a totally ordered abelian group, which satisfies the same axioms of a valuation except that we are not assuming that $v(x)=\infty\Rightarrow x=0$. The set $\{x\in K\mid v(x)=\infty\}$ is a prime ideal of the valuation domain $V$ of $v$, called the \emph{socle} of $v$. The following proposition is straightforward.
\begin{prop}\label{prop:pseudoval}
Let $E=\{s_n\}_{n\in\N}\subset K$ be a pseudo-convergent sequence of algebraic type and zero breadth ideal. If $q\in K[X]$ is the minimal polynomial of the limit of $E$ in $\widehat{K}$, then the map $w_E:K[X]_{(q)}\to \R\cup\{\infty\}$  extends $v$ and is a pseudo-valuation with socle $q(X)K[X]_{(q)}$. Moreover, the valuation ring of $w_E$, that is, $\{\phi\in K[X]_{(q)}\mid w_E(\phi)\geq0\}$, is equal to $V_E$.
\end{prop}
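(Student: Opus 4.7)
The plan is to base everything on the identity
\[
w_E(\phi)=v(\phi(\alpha))
\]
for $\phi\in K[X]_{(q)}$, where $\alpha\in\widehat K$ is the unique limit of the Cauchy sequence $E$ (which exists because $\Br(E)=0$). First I would check that $\alpha$ is algebraic over $K$, so that $q$ makes sense: since $E$ is of algebraic type, some $f\in K[X]$ has $v(f(s_n))$ eventually strictly increasing, hence $f(\alpha)=\lim f(s_n)=0$, and then $q$ is the irreducible minimal polynomial of $\alpha$.

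The key identity is then proved as follows. Writing $\phi=f/g$ with $q\nmid g$, one has $g(\alpha)\neq 0$, so $\phi(s_n)$ is eventually defined and $\phi(s_n)\to\phi(\alpha)$ in $\widehat K$. If $\phi(\alpha)\neq 0$ (equivalently $q\nmid f$), continuity of $v$ gives $v(\phi(s_n)-\phi(\alpha))\to\infty$, and the ultrametric inequality forces $v(\phi(s_n))=v(\phi(\alpha))$ for all large $n$. If instead $\phi(\alpha)=0$ (equivalently $q\mid f$), then $v(\phi(s_n))\to+\infty$ by definition of convergence in $\widehat K$. From the identity everything else is formal: $w_E$ extends $v$ on constants, and the pseudo-valuation axioms $w_E(\phi_1\phi_2)=w_E(\phi_1)+w_E(\phi_2)$ and $w_E(\phi_1+\phi_2)\geq\min\{w_E(\phi_1),w_E(\phi_2)\}$ are inherited from $v$ evaluated at $\alpha$, with the usual conventions on $\infty$. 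The socle $\{\phi\mid w_E(\phi)=\infty\}$ is precisely the kernel of evaluation at $\alpha$ on $K[X]_{(q)}$, which equals $qK[X]_{(q)}$ by irreducibility of $q$ (since $q\mid f$ iff $\phi\in qK[X]_{(q)}$).

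For the equality $V_E=\{\phi\in K[X]_{(q)}\mid w_E(\phi)\geq 0\}$, I would first establish $V_E\subseteq K[X]_{(q)}$ by contradiction: if $\phi=f/g\in V_E$ in lowest terms had $q\mid g$, write $g=q^kh$ with $q\nmid h$ and $k\geq 1$; then $v(q(s_n))\to\infty$ while $v(f(s_n))$ and $v(h(s_n))$ stabilize at the finite values $v(f(\alpha))$ and $v(h(\alpha))$, so $v(\phi(s_n))\to-\infty$, contradicting $\phi(s_n)\in V$ eventually. Once one is inside $K[X]_{(q)}$, the containment $V_E\subseteq W_E$ is immediate by passing to the limit in $v(\phi(s_n))\geq 0$. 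For the reverse inclusion, $w_E(\phi)\geq 0$ splits into the case $\phi(\alpha)=0$, where $v(\phi(s_n))\to\infty$ trivially gives $\phi(s_n)\in V$ eventually, and the case $v(\phi(\alpha))\geq 0$, where the stabilization $v(\phi(s_n))=v(\phi(\alpha))\geq 0$ for large $n$ directly gives $\phi\in V_E$.

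The only delicate point is this stabilization: the inequality $v(\phi(s_n))\geq 0$ does not in general pass cleanly through a limit, so one genuinely needs the ultrametric fact that $v(\phi(s_n))$ is eventually \emph{equal} to $v(\phi(\alpha))$ rather than merely convergent to it. This is what makes the reverse inclusion work in the borderline case $w_E(\phi)=0$, and it is where the hypothesis that $\alpha$ lives in $\widehat K$ (so that $\phi(s_n)\to\phi(\alpha)$ in the $v$-topology) carries all the weight.
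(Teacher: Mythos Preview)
Your proof is correct and is exactly the natural argument; the paper itself omits the proof entirely, declaring the proposition ``straightforward'' and implicitly relying on Remark~\ref{remark:BrE=0}, which already records the identification $V_E=W_\alpha$ via continuity of rational functions. Your key identity $w_E(\phi)=v(\phi(\alpha))$ is precisely what underlies that remark, and your careful treatment of the stabilization $v(\phi(s_n))=v(\phi(\alpha))$ for large $n$ (rather than mere convergence) is indeed the point that makes the borderline case $w_E(\phi)=0$ go through.
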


\begin{Def}
Let $E\subset K$ be a pseudo-convergent sequence which is either of transcendental type or of algebraic type and non-zero breadth ideal. We call the associated rank one valuation $w_E:K(X)\to\R\cup\{\infty\}$ the \emph{Ostrowski valuation} associated to $E$, and the corresponding valuation domain $W_E$ the \emph{Ostrowski valuation domain}  associated to $E$.
\end{Def}


The following corollary is an easy consequence of Proposition \ref{prop:qsn}. It also follows from Lemma \ref{VII} below.
\begin{cor}\label{cor:wEphi}
Let $E=\{s_n\}_{n\inN}\subset K$ be a pseudo-convergent sequence of algebraic type with breadth $\delta$, and let $\phi\in K(X)$.  If $\lambda=\degdom_E(\phi)$ and $\gamma\in\Gamma_v$ is as in Proposition \ref{prop:qsn}, then we have
\begin{equation}\label{wEphi}
w_E(\phi)=\lambda\delta+\gamma
\end{equation}
In particular, $\Gamma_{w_E}=\Z\delta+\Gamma_v$.
\end{cor}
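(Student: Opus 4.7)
The plan is to obtain both statements directly from Proposition \ref{prop:qsn}(a) by passing to the limit. For the formula $w_E(\phi)=\lambda\delta+\gamma$, Proposition \ref{prop:qsn}(a) already produces $\lambda=\degdom_E(\phi)\in\Z$ and $\gamma\in\Gamma_v$, both independent of $n$, such that $v(\phi(s_n))=\lambda\delta_n+\gamma$ for all sufficiently large $n$. Since by definition $w_E(\phi)=\lim_n v(\phi(s_n))$ and $\delta_n\nearrow\delta$ with $\delta<\infty$ (which must be the case for $w_E$ to be a valuation; see Proposition \ref{prop:limsup}), taking the limit on both sides immediately gives the claimed formula.

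For the value group identity $\Gamma_{w_E}=\Z\delta+\Gamma_v$, the inclusion $\subseteq$ is an immediate consequence of the formula just proved. For the reverse inclusion, $\Gamma_v\subseteq\Gamma_{w_E}$ holds trivially because $w_E$ extends $v$, so the real content is the claim $\delta\in\Gamma_{w_E}$, equivalent to the existence of some $\phi\in K(X)$ with $\degdom_E(\phi)=1$. If $E$ admits a pseudo-limit $\alpha\in K$, this is immediate: taking $\phi=X-\alpha$ one has $v(\phi(s_n))=\delta_n$ for every $n$, so $w_E(\phi)=\delta$. In the genuinely algebraic case where every pseudo-limit $\beta$ of $E$ lies in $\overline K\setminus K$, one instead starts from the minimal polynomial $\mu\in K[X]$ of such a $\beta$ and modifies it by an auxiliary rational function in $K(X)$ in order to trim $\degdom_E(\mu)$ (which, by definition, counts the Galois conjugates of $\beta$ sitting inside the pseudo-limit ball $B_u(\beta,e^{-\delta})$) down to exactly $1$.

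The main obstacle is precisely this last construction: while the existence of some $\phi\in K(X)$ with positive dominating degree is automatic from the definition of algebraic type, producing one with dominating degree exactly $1$ requires tracking how many Galois conjugates of a pseudo-limit fall inside the pseudo-limit ball, a question that depends on the finer geometry of the extension $K(\beta)/K$ and is not directly addressed by Proposition \ref{prop:qsn}. The authors' remark that the corollary also follows from the forthcoming Lemma \ref{VII} suggests that this combinatorial step can be bypassed by a more structural argument.
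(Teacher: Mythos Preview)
Your derivation of the formula $w_E(\phi)=\lambda\delta+\gamma$ by passing to the limit in Proposition~\ref{prop:qsn}\ref{prop:qsn:qsn} is correct and is exactly what the paper intends: the paper offers no more than the sentence ``an easy consequence of Proposition~\ref{prop:qsn}'' together with a forward reference to Lemma~\ref{VII}.

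For the value group equality, your argument for $\Gamma_{w_E}\subseteq\Z\delta+\Gamma_v$ and for $\Gamma_v\subseteq\Gamma_{w_E}$ is fine, and the case $\limiti_E\cap K\neq\emptyset$ via $w_E(X-\alpha)=\delta$ is handled correctly. But the proposal does not actually prove the remaining case: you sketch a strategy (trim the minimal polynomial to dominating degree~$1$) and then explicitly label it the ``main obstacle'' without carrying it out, so the argument is left incomplete. One small sharpening of your analysis: since $\Gamma_v\subseteq\Gamma_{w_E}$ and $\lambda_\phi\delta=w_E(\phi)-\gamma_\phi\in\Gamma_{w_E}$ for every $\phi$, what is really needed is that the greatest common divisor of all integers arising as $\degdom_E(\phi)$ over $K(X)$ be~$1$ (modulo the order of $\delta$ over $\Gamma_v$), not literally a single $\phi$ with dominating degree~$1$. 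That said, your diagnosis of the difficulty is accurate: this gcd is governed by how the Galois conjugates of a pseudo-limit distribute among balls of radius $e^{-\delta}$, and nothing in Proposition~\ref{prop:qsn} controls this.

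The paper does not address this point either, and you should note that every subsequent use of Corollary~\ref{cor:wEphi} in the paper (in Theorems~\ref{teor:rank-VE} and~\ref{teor:valut-dim2} and in Remark~\ref{Rem:osservazioni su wE}) invokes only the formula~\eqref{wEphi} or the inclusion $\Gamma_{w_E}\subseteq\Z\delta+\Gamma_v$, both of which your argument fully establishes.
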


\begin{Rem}\label{Rem:osservazioni su wE}
\begin{enumerate}[(a)]
\item\label{Rem:osservazioni su wE:X} Let $u$ be an extension of $v$ to $\overline 
K$. It follows at once from Corollary \ref{cor:wEphi} that if $E=\{s_n\}_{n\inN}\subset K$ is a pseudo-convergent sequence of algebraic type with breadth $\delta$ and $\beta\in\mathcal{L}_E^u$, then, for each $s\in K$ we have:
\begin{equation}\label{wE(X-beta)}
w_E(X-s)=\lim_{n\to\infty}v(s_n-s)=
\left\{
\begin{array}{cc}
\delta,&\; \textnormal{ if }s\in\mathcal{L}_E\\
u(s-\beta)<\delta,&\textnormal{ if }s\notin\mathcal{L}_E
\end{array}
\right.
\end{equation}
Note that, in case $s\notin\mathcal{L}_E$ and $\beta'\in\mathcal{L}_E^u$, we have $w_E(X-s)=u(s-\beta)=u(s-\beta')$, thus this value is independent of the chosen pseudo-limit of $E$. Similarly, if $E$ is of transcendental type, then $w_E(X-s)<\delta$ for any $s\in K$.
\item Under the assumption of Corollary \ref{cor:wEphi}, let $u$ be a fixed extension of $v$ to $\overline{K}$ and let $w_E$ be extended to $\overline{K}(X)$ along $u$ (i.e., $w_E(\psi)=\lim u(\psi(s_n))$, for any $\psi\in\overline{K}(X)$). Let $S$ be the multiset of critical points of $\phi$ which are pseudo-limits of $E$. Then by (\ref{wEphi}) and (\ref{wE(X-beta)}) we have
$$w_E(\phi)=w_E(\phi_S)+w_E\left(\frac{\phi}{\phi_S}\right)=\lambda\delta+\gamma$$
where $\lambda\delta=w_E(\phi_S)$ and $w_E\left(\frac{\phi}{\phi_S}\right)=u\left(\frac{\phi}{\phi_S}(\beta)\right)=\gamma$, where $\beta\in\mathcal{L}_E^u$, since $w_E(X-\alpha)=v(\beta-\alpha)$ for every $\alpha\notin S$ by the previous remark. We stress the strong analogy between this expression of $w_E$ and the valuation associated to a valuation domain of the form $W_{\alpha}$, for $\alpha\in\widehat{K}$ which is algebraic over $K$. See \cite[p. 126]{CaCh} and \cite[Remark 2.3]{PerTransc}.
\end{enumerate}
\end{Rem}

The next lemma is taken from \cite{Ostr} and gives an important connection between monomial valuations and Ostrowski valuations; we repeat it here for the convenience of the reader.
\begin{lemma}\cite[VII, p. 377]{Ostr}\label{VII}
Let $E=\{s_n\}_{n\in\N}$ be  a pseudo-convergent sequence of algebraic type and let $\alpha\in\mathcal{L}_E^u$, for some extension $u$ of $v$ to $\overline{K}$. Then $w_E=(u_{\alpha,\delta_E})_{|K(X)}=v_{\alpha,\delta_E}$.
\end{lemma}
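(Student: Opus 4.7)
The plan is to reduce the statement to a computation on polynomials in $\overline{K}[X]$ expanded around $\alpha$, then use multiplicativity and restriction. Since both $w_E$ (extended to $\overline{K}(X)$ along $u$ as in Remark \ref{Rem:osservazioni su wE}\ref{Rem:osservazioni su wE:X}) and the monomial valuation $u_{\alpha,\delta_E}$ are valuations, it suffices to show they agree on $\overline{K}[X]$; the restriction to $K(X)$ then gives the first equality $w_E = (u_{\alpha,\delta_E})_{|K(X)}$, and the second identity ``$= v_{\alpha,\delta_E}$'' is the notational convention identifying $v_{\alpha,\delta_E}$ with this restriction (mirroring what is done in the Corollary to Proposition \ref{description v_E}).

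Concretely, I would fix $f(X) = \sum_{i=0}^d a_i(X-\alpha)^i \in \overline{K}[X]$ with $a_i \in \overline{K}$. Because $\alpha \in \mathcal{L}_E^u$, we have $u(s_n - \alpha) = \delta_n$ for every $n$, so the $i$-th summand of $f(s_n)$ has value $\lambda_i(\delta_n)$, where $\lambda_i(\eta) := u(a_i) + i\eta$ is an affine function of slope $i$. The $\overline{K}$-version of Proposition \ref{prop:degdom} then supplies a unique index $k$ and a threshold $\tau < \delta_E$ with $\lambda_k(\eta) < \lambda_i(\eta)$ for every $i \neq k$ and every $\eta \in \Delta_E \cap (\tau, +\infty)$; for $n$ large enough that $\delta_n > \tau$, the ultrametric inequality forces
\begin{equation*}
u(f(s_n)) \;=\; \lambda_k(\delta_n) \;=\; u(a_k) + k\delta_n.
\end{equation*}

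Passing to the limit as $\delta_n \nearrow \delta_E$ (a finite limit, since $\Br(E) \neq 0$ is required for $w_E$ to be a valuation in the algebraic case), I would conclude
\begin{equation*}
w_E(f) \;=\; u(a_k) + k\delta_E \;=\; \inf_i\{u(a_i) + i\delta_E\} \;=\; u_{\alpha,\delta_E}(f),
\end{equation*}
where the middle equality holds because $k$ is the unique minimizer of the $\lambda_i$ on a left-neighbourhood of $\delta_E$ and, by continuity of the $\lambda_i$ in $\eta$, it remains the minimizer at $\eta = \delta_E$ itself. Multiplicativity of both valuations then extends the identification from $\overline{K}[X]$ to all of $\overline{K}(X)$, after which restriction to $K(X)$ closes the argument.

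The only real obstacle is the minimizer--stability step, namely that a single index $k$ achieves the minimum of $\lambda_i(\delta_n)$ for all sufficiently large $n$ and still realizes the infimum at $\delta_E$; but this is precisely the content of the argument in the proof of Proposition \ref{prop:degdom}, so the lemma essentially falls out of that proposition combined with the limit definition of $w_E$. If one prefers to avoid invoking the proposition directly, the same stabilization can be obtained ``by hand'' from the observation that finitely many distinct affine functions $\lambda_i$ with distinct slopes admit only finitely many pairwise intersection points, hence a unique eventual minimizer on any increasing sequence $\delta_n \nearrow \delta_E$.
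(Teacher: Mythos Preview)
Your argument is correct. The one step worth scrutinizing is the passage from ``$k$ is the unique minimizer on a left-neighbourhood of $\delta_E$'' to ``$u(a_k)+k\delta_E=\inf_i\{u(a_i)+i\delta_E\}$''; as you note, this follows because the finitely many affine functions $\lambda_i$ (with pairwise distinct slopes, after discarding the indices with $a_i=0$) cross only finitely often, so the eventual minimizer $k$ satisfies $\lambda_k(\eta)<\lambda_i(\eta)$ on some interval $(\tau,\delta_E)$ and hence $\lambda_k(\delta_E)\leq\lambda_i(\delta_E)$ by letting $\eta\to\delta_E$. That is enough even if the minimum at $\delta_E$ is not uniquely attained.

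Your route is genuinely different from the paper's. The paper reduces to algebraically closed $K$ and then invokes Ostrowski's characterization \cite[IV, p.~366]{Ostr}: a valuation $w$ on $K(X)$ extending $v$ with $w(X-\alpha)=\delta$ coincides with $v_{\alpha,\delta}$ as soon as $w(X-\alpha+\beta)=\min\{\delta,v(\beta)\}$ for every $\beta\in K$; this single identity is then checked in two lines. Your proof instead computes $w_E$ on an arbitrary polynomial directly, essentially recombining Proposition~\ref{prop:degdom} and Corollary~\ref{cor:wEphi} with a continuity argument at $\delta_E$. The paper's version is shorter but leans on an external black box; yours is self-contained within the machinery already developed in Sections~\ref{sect:VE}--\ref{sect:WE}, and makes transparent \emph{why} the limit of $u(f(s_n))$ lands on the monomial-valuation infimum. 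Either approach is perfectly acceptable here.
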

\begin{proof}
We can reduce to proving the statement when $K$ is algebraically closed, so in particular $u=v$. We have to show that $w_E=v_{\alpha,\delta}$, where $\delta=\delta_E$ and $\alpha\in K$ is a pseudo-limit of $E$. Let $\beta\in K$. To this end, by \cite[IV, p. 366]{Ostr}, it is sufficient to show that $w_E(X-\alpha+\beta)=\min\{w_E(X-\alpha),v(\beta)\}=\{\delta,v(\beta)\}$. If $\delta\not=v(\beta)$ then this is clear, so suppose that $\delta=v(\beta)$. We have:
$$w_E(X-\alpha+\beta)=\lim_{n\to\infty}v(s_n-\alpha+\beta)=\lim_{n\to\infty}v(s_n-\alpha)=\delta$$
so that also in this case we have the claimed equality.
\end{proof}

We now show under which cases $V_E$ has rank $1$ or $2$.
\begin{teor}\label{teor:rank-VE}
Let $E\subset K$ be a pseudo-convergent sequence.
\begin{enumerate}[(a)]
\item\label{teor:rank-VE:trasc} If $E$ is of transcendental type, then $V_E=W_E$ has rank 1.
\item\label{teor:rank-VE:alg-inf} If $E$ is of algebraic type and its breadth is infinite, then $V_E$ has rank 2; furthermore, if $q$ is the minimal polynomial of the pseudo-limit of $E$, then the one-dimensional overring of $V_E$ is $K[X]_{(q)}$.
\item\label{teor:rank-VE:alg-fin} Suppose that $E$ is of algebraic type with breadth $\delta\inR$. The following conditions are equivalent:
\begin{enumerate}[(i)]
\item\label{teor:rank-VE:alg-fin:tors} $\delta$ is not torsion over $\Gamma_v$;
\item\label{teor:rank-VE:alg-fin:rtrasc} $W_E$ is not residually transcendental over $V$;
\item\label{teor:rank-VE:alg-fin:rank} $V_E$ has rank one;
\item\label{teor:rank-VE:alg-fin:ug} $V_E=W_E$;
\item\label{teor:rank-VE:alg-fin:trace} $V_E\cap K[X]=W_E\cap K[X]$.
\end{enumerate}
\end{enumerate}
\end{teor}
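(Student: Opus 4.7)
The three parts have rather different flavors. Part (a) is essentially immediate: for $E$ of transcendental type, $\degdom_E\phi=0$ for every $\phi\in K(X)$, so by Proposition~\ref{prop:qsn} the sequence $v(\phi(s_n))$ is eventually constant; this common value is simultaneously $v_E(\phi)$ and $w_E(\phi)$, whence $V_E=W_E$, which has rank one by Proposition~\ref{prop:limsup}. For part (b), one identifies $V_E=W_\alpha$ via Remark~\ref{remark:BrE=0}, where $\alpha\in\widehat K$ is the algebraic limit of $E$, and uses Proposition~\ref{prop:pseudoval} to exhibit $K[X]_{(q)}$ as a proper one-dimensional valuation overring of $V_E$; combined with the rank bound recalled at the start of this section, this forces $V_E$ to have rank exactly $2$, with $K[X]_{(q)}$ as its unique one-dimensional overring.

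The bulk of the theorem lies in (c), which I would prove via the cycle $(v)\Rightarrow(i)\Rightarrow(iv)\Rightarrow(v)$ together with the separate equivalences $(i)\Leftrightarrow(ii)$ and $(iii)\Leftrightarrow(iv)$. The equivalence $(i)\Leftrightarrow(ii)$ follows immediately from Lemma~\ref{VII}, which identifies $w_E$ with the monomial valuation $v_{\alpha,\delta}$ for any pseudo-limit $\alpha\in\overline K$, together with the cited \cite[Lemma 3.5]{PerPrufer}. For $(iii)\Leftrightarrow(iv)$, the inclusion $V_E\subseteq W_E$ from Proposition~\ref{prop:limsup} makes $(iv)\Rightarrow(iii)$ trivial; conversely, if $V_E$ has rank one, its only overrings inside $K(X)$ are $V_E$ and $K(X)$ itself, and since $W_E$ is a proper overring of $V_E$ distinct from $K(X)$ we obtain $V_E=W_E$.

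The heart of the argument is $(i)\Rightarrow(iv)$. Starting from the formula $v(\phi(s_n))=\lambda\delta_n+\gamma$ of Proposition~\ref{prop:qsn}, with $\lambda=\degdom_E(\phi)\in\Z$ and $\gamma\in\Gamma_v$, and its limit $w_E(\phi)=\lambda\delta+\gamma$ given by Corollary~\ref{cor:wEphi}, a case analysis on the sign of $\lambda$ (exploiting $\delta_n\nearrow\delta$) shows that $\phi\in W_E\setminus V_E$ occurs exactly when $\lambda>0$ and $\lambda\delta+\gamma=0$: in this regime $\lambda\delta_n+\gamma$ increases strictly to $0$, so $v(\phi(s_n))<0$ for every large $n$, while $w_E(\phi)=0$. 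The equation $\lambda\delta=-\gamma\in\Gamma_v$ with $\lambda\geq 1$ is precisely the failure of $(i)$; consequently, under (i), $W_E\setminus V_E$ is empty, hence $V_E=W_E$.

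Finally $(iv)\Rightarrow(v)$ is trivial, and $(v)\Rightarrow(i)$ would be proved contrapositively. Assume $\delta$ has finite order $r\geq 1$ modulo $\Gamma_v$ and construct an explicit $f\in(W_E\setminus V_E)\cap K[X]$. Fixing an extension $u$ of $v$ to $\overline K$, a pseudo-limit $\alpha\in\overline K$ and its minimal polynomial $q\in K[X]$, one has $w_E(q)\in\Gamma_{w_E}=\Z\delta+\Gamma_v$, whence $rw_E(q)\in\Gamma_v$, so there exists $c\in K$ with $v(c)=-rw_E(q)$. The polynomial $f=cq^r$ then satisfies $w_E(f)=0$ and $\degdom_E(f)=rm\geq 1$, where $m\geq 1$ is the number of conjugates of $\alpha$ lying in $\mathcal L_E^u$; by the criterion from the previous step, $f\in W_E\setminus V_E$, contradicting (v). The main hurdle is precisely this last construction: the torsion assumption on $\delta$ is what allows $c$ to be chosen inside $K$, and using $q$ rather than a linear factor $X-\beta$ accommodates the general case in which no pseudo-limit of $E$ lies in $K$.
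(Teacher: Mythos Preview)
Your proof is correct and shares the paper's key ideas: the case analysis on the sign of $\lambda=\degdom_E\phi$ via Proposition~\ref{prop:qsn} and Corollary~\ref{cor:wEphi}, and the construction of an explicit polynomial in $W_E\setminus V_E$ as a normalized power of the minimal polynomial of a pseudo-limit. The organization of part~(c) differs slightly. The paper proves $(v)\Rightarrow(iv)$ directly---showing that any $\phi\in W_E\setminus V_E$ yields a polynomial $cp^\lambda\in W_E\setminus V_E$, without yet invoking torsion of $\delta$---and then establishes $(i)\Leftrightarrow(iv)$ after first reducing to the case where $E$ has a pseudo-limit $\beta\in K$ (using $(X-\beta)^k/c$ for the $\neg(i)\Rightarrow\neg(iv)$ direction). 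Your cycle $(v)\Rightarrow(i)\Rightarrow(iv)$ bypasses that reduction by working from the outset with the minimal polynomial $q\in K[X]$ of a pseudo-limit in $\overline K$, which is arguably more streamlined; the trade-off is that the paper's $(v)\Rightarrow(iv)$ isolates the cleaner general fact that $W_E\setminus V_E$ is detected already on polynomials. One small wording issue: in your argument for $(iii)\Rightarrow(iv)$, the phrase ``$W_E$ is a proper overring of $V_E$'' should read ``$W_E$ is an overring of $V_E$ properly contained in $K(X)$'', since $W_E\supsetneq V_E$ is precisely what you are ruling out.
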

\begin{proof}
\ref{teor:rank-VE:trasc} follows directly from \cite[Theorem 2]{Kap} and the proof of Theorem \ref{Thm:VE valuation domain}, while \ref{teor:rank-VE:alg-inf} is a direct consequence of Remark \ref{remark:BrE=0} (and Proposition \ref{prop:pseudoval}).

\ref{teor:rank-VE:alg-fin} Let $E=\{s_n\}_{n\inN}$ be of algebraic type with finite breadth, and let $\{\delta_n\}_{n\inN}$ be the gauge of $E$. Since $V_E\subseteq W_E$ and $W_E$ has rank 1, conditions \ref{teor:rank-VE:alg-fin:rank} and \ref{teor:rank-VE:alg-fin:ug} are clearly equivalent. Since by Lemma \ref{VII} $w_E=v_{\alpha,\delta}$, by \cite[Lemma 3.5]{PerPrufer} we have that \ref{teor:rank-VE:alg-fin:tors} is equivalent to \ref{teor:rank-VE:alg-fin:rtrasc}. Clearly, \ref{teor:rank-VE:alg-fin:ug} implies \ref{teor:rank-VE:alg-fin:trace}.

\ref{teor:rank-VE:alg-fin:trace} $\Longrightarrow$ \ref{teor:rank-VE:alg-fin:ug} Let $\phi\in W_E$ , i.e., $w_E(\phi)\geq 0$. Clearly, if $w_E(\phi)>0$ then $\phi\in V_E$, so suppose $w_E(\phi)=0$. By Proposition \ref{prop:qsn} and Corollary \ref{cor:wEphi}, there exist $\lambda\in\Z$ and $\gamma\in\Gamma_v$ such that $v(\phi(s_n))=\lambda\delta_n+\gamma$ for all large $n$; its limit $\lambda\delta+\gamma$ is equal to $w_E(\phi)$, and thus it is $0$. If $\lambda\leq 0$, then $v(\phi(s_n))$ is eventually positive, and $\phi\in V_E$.

Suppose $\lambda>0$ and let $p\in K[X]$ be the minimal polynomial of some pseudo-limit $\beta$ of $E$ (with respect to some extension $u$ of $v$ in $\overline{K}$). By Corollary \ref{cor:wEphi}, there are $\lambda_p\inZ$, $\gamma_p\in\Gamma_v$ such that $v(p(s_n))=\lambda_p\delta_n+\gamma_p$ for all large $n$; furthermore, $\lambda_p>0$ since $p$ is a polynomial and one of the roots of $p(X)$ is a pseudo-limit of $E$. Let $c\in K$ be an element of value $\lambda_p\gamma-\lambda\gamma_p$ (which exists since $\lambda,\lambda_p\in\Z$ and $\gamma,\gamma_p\in\Gamma_v$) and consider $\psi(X)=c p(X)^{\lambda}\in K[X]$. Then, for all $n\in\N$ sufficiently large we have
\begin{equation*}
v(\psi(s_n))=\lambda_p\gamma-\lambda\gamma_p+\lambda(\lambda_p\delta_n+\gamma_p)=\lambda_p(\gamma+\lambda\delta_n)=\lambda_pv(\phi(s_n)).
\end{equation*}
This quantity has limit $0$ as $n\to\infty$ and is strictly increasing, because $\lambda\lambda_p>0$; hence $v(\psi(s_n))<0$ for all $n\in\N$ sufficiently large. Therefore, $\psi\in W_E\setminus V_E$. However, this contradicts the hypothesis because $\psi(X)$ is a polynomial; hence, the claim is proved.

We show now that \ref{teor:rank-VE:alg-fin:tors}  $\iff$ \ref{teor:rank-VE:alg-fin:ug}, and we claim that it is sufficient to prove  the equivalence under the further assumption that $E$ has a pseudo-limit $\beta$ in $K$. Suppose that \ref{teor:rank-VE:alg-fin:tors} is equivalent to \ref{teor:rank-VE:alg-fin:ug} under this assumption and let $\beta\in\mathcal{L}_{E}^{v'}$ where $v'$ is an extension of $v$ to $K(\beta)$. Let $V_E'\subseteq W_E'$ be the valuation domains of $K(\beta)(X)$ associated to $E$ with respect to the valuation $v'$. If $\delta$ is not torsion over $\Gamma_v$ then $V_E'=W_E'$ and contracting down to $K(X)$ we get $V_E=W_E$. Conversely, if the rank of $V_E$ is one (thus, $V_E=W_E$) then also the rank of $V_E'$ is one (because $K(X)\subseteq K(\beta)(X)$ is an algebraic extension) and so $\delta$ is not torsion over $\Gamma_v$.

Suppose thus that $\beta\in K$ is a pseudo-limit of $E$. By \eqref{wE(X-beta)} we have $w_E(X-\beta)=\delta$. If $\delta$ is torsion over $\Gamma_v$, then $k\delta\in\Gamma_v$ for some $k\in\N$, i.e., there is $c\in K$ such that $w_E((X-\beta)^k)=v(c)$; let $\phi(X)=\frac{(X-\beta)^k}{c}$. Then, $w_E(\phi)=0$ and thus $\phi\in W_E$, while
\begin{equation}\label{vE(X-beta)^k/c}
v\left(\frac{(s_n-\beta)^k}{c}\right)=k\delta_n-v(c)<0,
\end{equation}
and thus $\phi(s_n)\notin V$ for every $n$, which implies $\phi\notin V_E$. Hence, $V_E\neq W_E$.

Conversely, suppose that $\delta$ is not torsion over $\Gamma_v$, and let $\phi\in W_E$. If $w_E(\phi)>0$ then $\phi$ belongs to the maximal ideal of $W_E$, which is contained in $V_E$. Suppose $w_E(\phi)=0$, and let $k$ be the dominating degree of $E$. By definition we have $w_E(\phi)=k\delta+\gamma$ for some $\gamma\in\Gamma_v$ (see also Corollary \ref{cor:wEphi} and (\ref{wEphi})); since this quantity is 0 and $\delta$ is not torsion, we must have $k=0$, and so also $\gamma=0$. But this means that $v(\phi(s_n))=0$ for large $n$; in particular, $\phi(s_n)\in V$ for large $n$. Thus $\phi\in V_E$ and $V_E=W_E$.
\end{proof}


When the rank of $V_E$ is 1, then its valuation $v_E$ is exactly the Ostrowski valuation $w_E$; on the other hand, if $E$ is algebraic with infinite breadth, then $V_E$ has been characterized in Remark \ref{remark:BrE=0} and its valuation is described in \cite[Remark 2.3]{PerTransc}. When $V_E$ has rank 2 and $E$ has finite breadth, a description of $v_E$ has been obtained in Proposition \ref{description v_E}; we now want to embed $\Gamma_{v_E}$ as a totally ordered subgroup in $\insR^2$, endowed with the lexicographic order (this can be done by Hahn's theorem \cite[Th\'eor\`eme 2, p. 22]{Ribenboim}).

\begin{teor}\label{teor:valut-dim2}
Let $E=\{s_n\}_{n\inN}$ be a pseudo-convergent sequence with non-zero breadth  ideal such that $V_E$ has rank 2. Then the map
\begin{equation*}
\begin{aligned}
\nu\colon K(X)\setminus\{0\}& \longrightarrow  \R^2\\
\phi & \longmapsto (w_E(\phi),-\degdom_E(\phi))
\end{aligned}
\end{equation*}
is a valuation on $K(X)$ whose associated valuation ring is $V_E$.
\end{teor}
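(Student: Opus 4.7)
The plan is to reduce everything to the concrete asymptotic description of the sequence $v(\phi(s_n))$ provided by Proposition~\ref{prop:qsn} and Corollary~\ref{cor:wEphi}. For each nonzero $\phi\in K(X)$, there exist $\lambda_\phi=\degdom_E(\phi)\in\Z$ and $\gamma_\phi\in\Gamma_v$ such that $v(\phi(s_n))=\lambda_\phi\delta_n+\gamma_\phi$ for all sufficiently large $n$, and $w_E(\phi)=\lambda_\phi\delta+\gamma_\phi$. Thus $\nu(\phi)=(\lambda_\phi\delta+\gamma_\phi,-\lambda_\phi)$ encodes the eventual linear function $n\mapsto\lambda_\phi\delta_n+\gamma_\phi$ through its limit and (the negative of) its slope.

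The crucial observation I would establish first is that, because $\delta_n\nearrow\delta$, the lexicographic order of $\nu(\phi)$ against $\nu(\psi)$ on $\R^2$ coincides with the \emph{eventual} order of $v(\phi(s_n))$ against $v(\psi(s_n))$. The first coordinate handles the case $w_E(\phi)\neq w_E(\psi)$ immediately by convergence. When $w_E(\phi)=w_E(\psi)$, one has $\gamma_\phi-\gamma_\psi=-(\lambda_\phi-\lambda_\psi)\delta$, so the difference $v(\phi(s_n))-v(\psi(s_n))$ simplifies to $(\lambda_\phi-\lambda_\psi)(\delta_n-\delta)$, whose sign is \emph{opposite} to that of $\lambda_\phi-\lambda_\psi$. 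This is precisely what justifies the minus sign in the second coordinate of $\nu$.

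With this reformulation of the lex order, the three valuation axioms become routine. For multiplicativity $\nu(\phi\psi)=\nu(\phi)+\nu(\psi)$, I would use $v((\phi\psi)(s_n))=v(\phi(s_n))+v(\psi(s_n))$ together with the uniqueness of $(\lambda,\gamma)$ in the representation $\lambda\delta_n+\gamma$ for large $n$ (since $\delta_n$ takes infinitely many distinct values). For the ultrametric inequality, I would split into two cases. If $\nu(\phi)<\nu(\psi)$, the key observation gives $v(\phi(s_n))<v(\psi(s_n))$ eventually, whence $v((\phi+\psi)(s_n))=v(\phi(s_n))$ and so $\nu(\phi+\psi)=\nu(\phi)$. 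If $\nu(\phi)=\nu(\psi)$, then $\lambda_\phi=\lambda_\psi$ and $\gamma_\phi=\gamma_\psi$, so $v((\phi+\psi)(s_n))\geq\lambda_\phi\delta_n+\gamma_\phi$ for large $n$; applying the key observation to the pair $\phi+\psi$ versus $\phi$ then gives $\nu(\phi+\psi)\geq\nu(\phi)$. Finally, $\nu(\phi)\geq(0,0)$ lexicographically means either $w_E(\phi)>0$, or $w_E(\phi)=0$ with $\lambda_\phi\leq0$; by the key observation, this is equivalent to $\lambda_\phi\delta_n+\gamma_\phi\geq0$ for large $n$, that is, $\phi\in V_E$.

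The main obstacle is conceptual rather than technical: guessing the correct valuation $\nu$. Once one recognises that the information of the linear function $\lambda_\phi\delta_n+\gamma_\phi$ is captured by its limit together with its slope, and that the approach $\delta_n<\delta$ reverses the contribution of the slope to the eventual order, the choice $\nu=(w_E,-\degdom_E)$ is forced, and the proof falls out of Proposition~\ref{prop:qsn} with essentially no further computation.
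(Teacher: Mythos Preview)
Your proof is correct and follows essentially the same approach as the paper: both reduce to the linear asymptotics $v(\phi(s_n))=\lambda_\phi\delta_n+\gamma_\phi$ from Proposition~\ref{prop:qsn} and Corollary~\ref{cor:wEphi}, and both exploit $\delta_n<\delta$ to compare slopes when the limits coincide. Your ``key observation''---that the lexicographic order on $\nu$ coincides with the eventual order of $v(\phi(s_n))$---packages the paper's four-case verification of the ultrametric inequality into a single statement and lets the identification of the valuation ring fall out by comparing against the constant $1$; this is a cleaner organization of the same computations rather than a different argument.
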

\begin{proof}
By Theorem \ref{teor:rank-VE}, $E$ is of algebraic type and its breadth $\delta$ is torsion over $\Gamma_v$. Let $\{\delta_n\}_{n\in\N}$ be the gauge of $E$. Since $w_E$ is a valuation, we have $w_E(\phi_1\phi_2)=w_E(\phi_1)+w_E(\phi_2)$ for every $\phi_1,\phi_2\in K(X)$; the same formula holds for the dominating degree, since the multiset of zeros of $\phi_1\phi_2$ is exactly the union of the multisets of zeros of $\phi_1$ and $\phi_2$. Hence, $\nu(\phi_1\phi_2)=\nu(\phi_1)+\nu(\phi_2)$.

We now want to show that $\nu(\phi_1+\phi_2)\geq\min\{\nu(\phi_1),\nu(\phi_2)\}$. Let $\lambda_1=\degdom_E(\phi_1)$, $\lambda_2=\degdom_E(\phi_2)$, $\lambda=\degdom_E(\phi_1+\phi_2)$. By Proposition \ref{prop:qsn}, there are $\gamma_1,\gamma_2,\gamma\in\Gamma_v$ such that $v(\phi_i(s_n))=\lambda_i\delta_n+\gamma_i$, $i=1,2$ and $v((\phi_1+\phi_2)(s_n))=\lambda\delta_n+\gamma$ for all large $n$. Furthermore, by Corollary \ref{cor:wEphi}, $w_E(\phi_i)=\lambda_i\delta+\gamma_i$, $i=1,2$ and $w_E(\phi_1+\phi_2)=\lambda\delta+\gamma$.

We distinguish four cases.

If $w_E(\phi_1)\neq w_E(\phi_2)$, then without loss of generality $w_E(\phi_1)<w_E(\phi_2)$.  Hence, we have $v(\phi_1(s_n))<v(\phi_2(s_n))$ for all large $n$. Thus,
\begin{equation*}
v((\phi_1+\phi_2)(s_n))=v(\phi_1(s_n))=\lambda_1\delta_n+\gamma_1.
\end{equation*}
Hence, $\lambda_1\delta_n+\gamma_1=\lambda\delta_n+\gamma$ infinitely many times. Thus, it must be $\lambda_1=\lambda$, and so $\nu(\phi_1+\phi_2)=\nu(\phi_1)=\min\{\nu(\phi_1),\nu(\phi_2)\}$.

If $w_E(\phi_1)=w_E(\phi_2)<w_E(\phi_1+\phi_2)$, then $\nu(\phi_1+\phi_2)$ is bigger than both $\nu(\phi_1)$ and $\nu(\phi_2)$, and we are done.

Suppose that $w_E(\phi_1)=w_E(\phi_2)=w_E(\phi_1+\phi_2)$ and that $\lambda_1\neq\lambda_2$; without loss of generality, $\lambda_1>\lambda_2$ (i.e., $\nu(\phi_1)<\nu(\phi_2)$). Then,  $\lambda_1\delta_n+\gamma_1<\lambda_2\delta_n+\gamma_2$ for all large $n$. Therefore, as in the first case, $\lambda_1\delta_n+\gamma_1=\lambda\delta_n+\gamma$ for all large $n$, and so $\nu(\phi_1+\phi_2)=\nu(\phi_1)$.

Suppose now that $w_E(\phi_1)=w_E(\phi_2)=w_E(\phi_1+\phi_2)$ and that $\lambda_1=\lambda_2=:\lambda'$. Since the sequences $v(\phi_1(s_n))=\lambda'\delta_n+\gamma_1$ and $v(\phi_2(s_n))=\lambda'\delta_n+\gamma_2$ have the same limit, they must be eventually equal, and so $\gamma_1=\gamma_2=:\gamma'$. Since $v$ is a valuation, $v((\phi_1+\phi_2)(s_n))=\lambda\delta_n+\gamma\geq\lambda'\delta_n+\gamma'$. Since $w_E(\phi_1+\phi_2)=w_E(\phi_1)$, furthermore, the limits $\lambda\delta+\gamma$ and $\lambda'\delta+\gamma'$ are equal; it follows that $\lambda\leq\lambda'$. Hence,
\begin{equation*}
\nu(\phi_1+\phi_2)=(w_E(\phi_1),-\lambda)\geq(w_E(\phi_1),-\lambda')=\nu(\phi_1)=\nu(\phi_2).
\end{equation*}
Therefore, $\nu$ is a valuation.

The fact that $\nu$ extends $v$ follows from the fact that $w_E$ extends $v$.

Let $V'$ be the valuation ring associated to $\nu$. Suppose $\phi\in V_E$. If $w_E(\phi)>0$ then $\nu(\phi)>0$. If $w_E(\phi)=0$ then $v(\phi(s_n))=\lambda\delta_n+\gamma$ must tend to 0 from above, and thus $\lambda\leq 0$, i.e., $\nu(\phi)\geq 0$. Thus, $V_E\subseteq V'$. Conversely, if $\nu(\phi)\geq 0$ then either $w_E(\phi)>0$ (and so $\phi\in M_{W_E}\subset V_E$) or $w_E(\phi)=0$ and $\lambda\leq 0$; in the latter case, $\lambda\delta_n+\gamma\geq 0$, and so $v(\phi(s_n))\geq 0$. Therefore, $V'\subseteq V_E$, and so $V'=V_E$, as claimed.
\end{proof}

\begin{Rem}
Let $E$ be a pseudo-convergent sequence of algebraic type with breadth $\delta$ which is torsion over $\Gamma_v$. Since $\nu$ is a valuation relative to $V_E$, without loss of generality we can set $v_E=\nu$.

Let $\vEXbeta'=(\delta,-1)$. Take $\phi\in K(X)$ and let $\lambda=\degdom_E(\phi)$. By Theorem \ref{teor:valut-dim2}, we have
\begin{equation*}
v_E(\phi)=(w_E(\phi),-\lambda);
\end{equation*}
by Corollary \ref{cor:wEphi}, moreover, $w_E(\phi)=\lambda\delta+\gamma_x$ for some $\gamma_x\in\Gamma_v$. It follows that
\begin{equation*}
v_E(\phi)=\lambda\vEXbeta'+\gamma,
\end{equation*}
where $\gamma=(\gamma_x,0)\in\Gamma_{v_E}$. If, furthermore, $E$ has a pseudo-limit $\beta\in K$, then $\vEXbeta'=\vEXbeta=v_E(X-\beta)$.
\end{Rem}

\section{Equivalence of pseudo-convergent sequences}\label{sect:equiv}

We recall that $V$ is a rank one valuation domain.

Classically, two Cauchy sequences $E,F\subset K$ are equivalent if the distance induced by the valuation $v$ between their corresponding terms goes to zero. If $\alpha$ and $\beta$ are the limits in $\widehat K$ of $E$ and $F$, respectively, it is known that $E$ and $F$ are equivalent if and only if the valuation domains $V_E=W_{\alpha}, V_F=W_{\beta}$ (see Remark \ref{remark:BrE=0}) are the same; in particular, $E$ and $F$ determine the same extension of the valuation $v$ to $K(X)$.  Ostrowski investigated in  \cite[p. 387]{Ostr} the similar problem for the valuation domains of the form $W_E$, for $E$ a pseudo-convergent sequence in $K$, which led him to give the notion of equivalent pseudo-convergent sequences. In this section, we consider a definition of equivalence for pseudo-convergent sequence as it appears in \cite[Section 3.2]{KarpWah}, even though we correct a mistake there. 


\begin{Def}\label{defin:equiv}
Let $E=\{s_n\}_{n\inN}$ and $F=\{t_n\}_{n\inN}$ be two pseudo-convergent sequences in $K$. We say that $E$ and $F$ are \emph{equivalent} if the breadths $\delta_E$ and $\delta_F$ are equal and, for every $k\inN$, there are $i_0,j_0\inN$ such that, whenever $i\geq i_0$, $j\geq j_0$, we have
\begin{equation*}
v(s_i-t_j)>v(t_{k+1}-t_k).
\end{equation*}
\end{Def} 
Note that the previous definition boils down to the classical notion of equivalence if $E$ and $F$ are Cauchy sequences.
\begin{oss}\label{mistake definition equivalence}
The previous definition was also considered in \cite[Section 3.2]{KarpWah} without the hypothesis $\delta_E=\delta_F$. However, without this condition the definition is not symmetric: for example, let $F=\{t_n\}_{n\inN}$ be a sequence in $V$ with $v(t_n)=\delta_n$, where $\{\delta_n\}_{n\inN}$ is a  positive increasing sequence, and let $E=\{s_n=t_n^2\}_{n\inN}$. Then, for every $k$ and every $i,j\geq k+1$ we have
\begin{equation*}
v(s_i-t_j)=\delta_j>\delta_k=v(t_{k+1}-t_k);
\end{equation*}
on the other hand, if $\delta_k>\inv{2}\delta$, then there are no $i,j$ such that
\begin{equation*}
v(s_i-t_j)>2\delta_k=v(s_{k+1}-s_k);
\end{equation*}
hence, $E$ and $F$ are equivalent according to \cite{KarpWah}, but $F$ and $E$ are not.

On the other hand, suppose that $E$ and $F$ are two pseudo-convergent sequence of $K$ which are equivalent according to Definition \ref{defin:equiv}. Then, for every $k$ there is a $k'$ such that $v(s_{k+1}-s_k)<v(t_{k'+1}-t_{k'})$. If now $i_0$ and $j_0$ are such that $v(s_i-t_j)>v(t_{k'+1}-t_{k'})$ for all $i\geq i_0$, $j\geq j_0$, then clearly $v(s_i-t_j)>v(s_{k+1}-s_k)$, so $F$ and $E$ are equivalent.
\end{oss}

We need first the following preliminary lemma.
\begin{lemma}\label{bothtrans bothalgebr}
Let $E,F\subset K$ two equivalent pseudo-convergent sequences. Then either $E$ and $F$ are both of transcendental type, or $E$ and $F$ are both of algebraic type. In the latter case, {$\limiti_E^u=\limiti_F^u$ for every} extension $u$ of $v$ to $\overline{K}$.
\end{lemma}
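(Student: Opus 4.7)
The plan is to establish the stronger statement that $\limiti_E^u = \limiti_F^u$ for every extension $u$ of $v$ to $\overline{K}$; both assertions of the lemma then follow at once. Indeed, by the Kaplansky characterization recalled in the paragraph after the definition of algebraic type, a pseudo-convergent sequence is of algebraic type if and only if its pseudo-limit set in $\overline{K}$ is non-empty for some extension $u$, so preservation of $\limiti^u$ under equivalence forces preservation of the type; in the algebraic case, the equality of pseudo-limit sets is exactly the second assertion, while in the transcendental case it is vacuous.

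By Remark \ref{mistake definition equivalence}, the equivalence relation is symmetric, so it suffices to prove the single inclusion $\limiti_E^u \subseteq \limiti_F^u$. Fix $\alpha \in \limiti_E^u$, write $\delta = \delta_E = \delta_F$, and denote the gauges of $E$ and $F$ by $\{\delta_n\}$ and $\{\delta_n'\}$ respectively; the goal is to show $u(\alpha - t_j) = \delta_j'$ for every $j \in \N$. Fix $j$. Since $\delta_n' \nearrow \delta$ strictly, we have $\delta_j' < \delta$. Applying the equivalence condition with $k = j$ yields indices $i_0, j_0$ such that $v(s_i - t_n) > \delta_j'$ whenever $i \geq i_0$ and $n \geq j_0$. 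Choose any $n > \max(j, j_0)$, and any $i \geq i_0$ with $\delta_i > \delta_j'$, which exists because $\delta_i \nearrow \delta > \delta_j'$. Since $n > j$, we have $v(t_n - t_j) = \delta_j'$, and because $v(s_i - t_n) > \delta_j' = v(t_n - t_j)$, the ultrametric identity applied to $s_i - t_j = (s_i - t_n) + (t_n - t_j)$ yields $v(s_i - t_j) = \delta_j'$. A second application of the ultrametric identity to $\alpha - t_j = (\alpha - s_i) + (s_i - t_j)$, together with $u(\alpha - s_i) = \delta_i > \delta_j' = v(s_i - t_j)$, gives $u(\alpha - t_j) = \delta_j'$, as required.

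The main obstacle, such as it is, lies in coordinating the quantifiers in the definition of equivalence. The threshold $j_0 = j_0(j)$ may well exceed $j$, so one cannot directly use the inequality $v(s_i - t_j) > \delta_j'$; the detour through an auxiliary index $n > \max(j, j_0)$ is precisely what allows the bound to be brought back down to index $j$. Simultaneously one must enlarge $i$ both to meet $i \geq i_0$ and to force $\delta_i > \delta_j'$, so that the two ultrametric reductions land on the desired value. No separate treatment is needed for $\delta = +\infty$ (the Cauchy case), since the only property of $\delta$ used is that $\delta_j'$ is always strictly less than it.
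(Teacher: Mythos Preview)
Your proof is correct and takes a cleaner, more elementary route than the paper's. The paper first reduces to the algebraically closed case, then uses the Ostrowski valuation to show $w_F(X-\beta)\geq\delta$, rules out $w_F(X-\beta)>\delta$ by contradiction, and finally appeals to Lemma~\ref{lemma:kaplansky} (equality of balls of the same radius) to upgrade $\limiti_E\subseteq\limiti_F$ to equality. You bypass all of this: by choosing indices carefully and applying the ultrametric identity twice, you verify directly that $u(\alpha-t_j)=\delta_j'$ for \emph{every} $j$, not merely eventually, and you handle the reverse inclusion by the symmetry established in Remark~\ref{mistake definition equivalence} rather than via the ball description. Your argument never invokes $w_F$, never needs the reduction to $K$ algebraically closed, and never uses the structure of $\limiti_F$ as a coset of the breadth ideal. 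The paper's approach has the minor advantage of fitting into the $w_E$ framework developed elsewhere, but yours is self-contained and sharper.
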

\begin{proof}
Let $E=\{s_n\}_{n\inN}$  and $F=\{t_n\}_{n\inN}$; let $\{\delta_n\}_{n\in\N},\{\delta'_n\}_{n\in\N}$  be the gauges of $E$ and $F$, respectively, and $\delta$ the breadth of $E$ and $F$. It is sufficient to prove that if either one of the two pseudo-convergent sequences, say $E$, is of algebraic type, then also the other is of algebraic type.

Suppose first that $K$ is algebraically closed and let $\beta$ be a pseudo-limit of $E$. Fix $k\in\N$. Then there exist $i_0,j_0\in\N$ such that, for all $m\geq i_0, n\geq j_0$, $v(s_m-t_n)>\delta_k$. For such $n$ and $m$, suppose also that $m\geq k$. Then $v(t_n-\beta)=v(t_n-s_m+s_m-\beta)\geq \delta_k$. Therefore, $w_F(X-\beta)=\lim_{n\to\infty} v(t_n-\beta)\geq \delta$.
If $w_F(X-\beta)>\delta$, then there is a $n_0$ such that, for all $n\geq n_0$, $v(t_n-\beta)>\delta$;  since $v(s_m-\beta)=\delta_m<\delta$, this means that, for every $m$ sufficiently large, $v(t_n-s_m)=v(s_m-\beta)$. This would imply that $t_n$ is a pseudo-limit of $E$ for all $n\geq n_0$, and thus that, in particular, $v(t_{n+1}-t_n)\geq\delta$, which is a contradiction since $v(t_{n+1}-t_n)=\delta'_n\nearrow\delta$.  Hence, $\delta_n\leq v(t_n-\beta)<\delta$ for all large $n$ and $w_F(X-\beta)=\delta$; this shows that $v(t_n-\beta)$ is eventually strictly increasing, that is, $\beta$ is a pseudo-limit of $F$ and thus also $F$ is of algebraic type. Moreover, since $\beta\in\mathcal{L}_E$ was arbitrarily chosen, we also have $\mathcal{L}_E\subseteq\mathcal{L}_F$, which shows that these sets are equal since they are closed balls of the same radius (Lemma \ref{lemma:kaplansky}). 

If now $K$ is not algebraically closed, let $u$ be any extension of $v$ to $\overline{K}$. Then, $E$ and $F$ are equivalent with respect to $u$; applying the previous part of the proof, we have that $F$ is of algebraic type and $\limiti_E^u=\limiti_F^u$, as claimed.
\end{proof}

\begin{teor}\label{teor:equiv-VE}
Let $E,F\subset K$ be two pseudo-convergent sequences  that are of transcendental type or of algebraic type with nonzero breadth ideal. Then, the following are equivalent:
\begin{enumerate}[(i),noitemsep]
\item\label{prop:equiv-VE:equiv} $E$ and $F$ are equivalent;
\item\label{prop:equiv-VE:V} $V_E=V_F$;
\item\label{prop:equiv-VE:W} $W_E=W_F$;
\item\label{prop:equiv-VE:w} $w_E=w_F$.
\end{enumerate}
Furthermore, if $E$ and $F$ are of algebraic type, the previous conditions are equivalent to the following:
\begin{enumerate}[(i),resume]
\item\label{prop:equiv-VE:limall} $\limiti_E^u=\limiti_F^u$ for all extensions $u$ of $v$ to $\overline{K}$;
\item\label{prop:equiv-VE:limun} $\limiti_E^u=\limiti_F^u$ for an extension $u$ of $v$ to $\overline{K}$.
\end{enumerate}
\end{teor}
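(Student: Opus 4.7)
The equivalences (iii)$\Leftrightarrow$(iv) (a valuation and its valuation ring determine each other) and (v)$\Rightarrow$(vi) are immediate, and (i)$\Rightarrow$(v) in the algebraic case is already Lemma~\ref{bothtrans bothalgebr} (which additionally forces $E$ and $F$ to be of the same type). My plan is to establish (i)$\Rightarrow$(iv), then (iv)$\Rightarrow$(vi) in the algebraic setting, then (iv)$\Rightarrow$(ii)$\Rightarrow$(iii), and finally (vi)$\Rightarrow$(i) together with the direct (iv)$\Rightarrow$(i) in the transcendental case.

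For (i)$\Rightarrow$(iv), fix $\phi\in K(X)$ and an extension $u$ of $v$ to $\overline{K}$. Corollary~\ref{cor:wEphi} writes $w_E(\phi)=\lambda\delta_E+\gamma$ with $\lambda=\degdom_E(\phi)$ the weighted sum of the critical points of $\phi$ lying in $\limiti_E^u$ and $\gamma=u((\phi/\phi_S)(\beta))$ for any $\beta\in\limiti_E^u$. In the algebraic case $\delta_E=\delta_F$ by Definition~\ref{defin:equiv} and $\limiti_E^u=\limiti_F^u$ by Lemma~\ref{bothtrans bothalgebr}, so the same data $(\lambda,\beta,\gamma)$ computes $w_F(\phi)$. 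In the transcendental case $\lambda=0$ and $w_E(\phi)=v(c)+\sum_i\epsilon_i u(\alpha_i-s_n)$ for $n$ large, where $\phi(X)=c\prod_i(X-\alpha_i)^{\epsilon_i}$; the equivalence forces $u(s_n-t_m)$ to eventually exceed each $u(\alpha_i-s_n)<\delta_E$, so the ultrametric inequality gives $u(\alpha_i-t_m)=u(\alpha_i-s_n)$, and $w_E(\phi)=w_F(\phi)$ once more.

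To extract (vi) from (iv) in the algebraic case, evaluate $w_E=w_F$ on $X-s_n$: one gets $\lim_m v(t_m-s_n)=w_F(X-s_n)=\delta_{E,n}$, which forces $s_n\notin\limiti_F^u$ (else the limit would be $\delta_F$ and equal $\delta_{E,n}$ for all large $n$, contradicting $\delta_{E,n}\nearrow\delta_E$) and, by symmetry, $\delta_E=\delta_F=:\delta$ together with the eventual equality $v(t_m-s_n)=\delta_{E,n}$. For $\alpha\in\limiti_E^u$, the ultrametric inequality applied to $\alpha-t_m=(\alpha-s_n)+(s_n-t_m)$ then forces $\liminf_m u(\alpha-t_m)\geq\delta$, hence $\alpha\in\limiti_F^u$; by symmetry $\limiti_E^u=\limiti_F^u$. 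With this in hand, (iv)$\Rightarrow$(ii) follows in all cases: when $V_E$ has rank one (Theorem~\ref{teor:rank-VE}, parts (a) and (c)) $V_E=W_E$ and (iv) coincides with (ii), while in the algebraic rank-two case Theorem~\ref{teor:valut-dim2} realizes $v_E$ as $(w_E,-\degdom_E)$ and the equality $\limiti_E^u=\limiti_F^u$ yields $\degdom_E=\degdom_F$, so $v_E=v_F$. Finally (ii)$\Rightarrow$(iii) is automatic since $W_E$ is either $V_E$ itself or its unique rank-one overring.

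It remains to prove (vi)$\Rightarrow$(i) in the algebraic case and (iv)$\Rightarrow$(i) in the transcendental case. From (vi), pick $\beta\in\limiti_E^u=\limiti_F^u$; the ultrametric inequality gives $u(s_i-t_j)\geq\min\{u(s_i-\beta),u(t_j-\beta)\}=\min\{\delta_{E,i},\delta_{F,j}\}$, which exceeds any prescribed $\delta_{F,k}$ once $i,j$ are large enough. In the transcendental case, where $\limiti_E^u=\emptyset$, the identity $w_F(X-s_n)=\delta_{E,n}$ (established above) still yields $v(t_m-s_n)=\delta_{E,n}$ for $m$ large and $\delta_E=\delta_F$; picking an intermediate term $s_{n_0}$ with $\delta_{E,n_0}>\delta_{F,k}$, for $i>n_0$ we have $v(s_i-s_{n_0})=\delta_{E,n_0}$, and for $j$ large $v(t_j-s_{n_0})=\delta_{E,n_0}$, so $v(s_i-t_j)\geq\delta_{E,n_0}>\delta_{F,k}$. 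The main subtleties I anticipate are the uniform bookkeeping in these ultrametric manipulations and, for the rank-two half of (iv)$\Rightarrow$(ii), the fact that $\degdom_E$ cannot be read off $w_E$ directly (because $\delta$ is torsion over $\Gamma_v$ exactly in that case) but must be recovered by first establishing (vi).
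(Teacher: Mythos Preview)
Your argument is essentially correct and traverses a different cycle of implications than the paper; two points deserve tightening. First, (iii)$\Leftrightarrow$(iv) is not quite immediate: a valuation ring determines its valuation only up to an order-isomorphism of value groups, so from $W_E=W_F$ you must still argue that the induced isomorphism fixes $\Gamma_v$ pointwise and hence (by density of $\Gamma_v$ in $\mathbb{R}$, or by immediacy in the DVR case) is the identity. Second, in the rank-one half of (iv)$\Rightarrow$(ii) you write ``$V_E=W_E$ and (iv) coincides with (ii)'', but this also needs $V_F=W_F$; that does follow from your own (iv)$\Rightarrow$(vi) argument applied symmetrically (it forces $E,F$ to have the same type and the same breadth, hence the same rank for $V_E,V_F$), but you should make that link explicit, since otherwise the mixed-type case is not visibly excluded.

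The paper organises things differently: it proves (iv)$\Rightarrow$(i) in one stroke, independent of type, via the identity $w_E(X-t_k)=\delta'_k$, and in the algebraic case it closes the loop by (vi)$\Rightarrow$(ii) directly through Proposition~\ref{prop:qsn}, showing that $v(\phi(s_n))$ and $v(\phi(t_n))$ share the same pair $(\lambda,\gamma)$ and the same limit, hence the same eventual sign. Your route instead inserts (iv)$\Rightarrow$(vi) as an intermediate step and then deduces (ii) by splitting on the rank of $V_E$, invoking Theorem~\ref{teor:valut-dim2} to recover $v_E=(w_E,-\degdom_E)$ in the rank-two case. This is a legitimate alternative: it makes the two components of $v_E$ explicitly visible and explains \emph{why} the dominating degree must be recovered separately when $\delta$ is torsion, at the cost of a case distinction the paper's direct computation avoids.
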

\begin{proof}
As usual, we set $E=\{s_n\}_{n\inN}$  and $F=\{t_n\}_{n\inN}$; let $\{\delta_n\}_{n\in\N},\{\delta'_n\}_{n\in\N}$  be the gauges of $E$ and $F$, respectively, and $\delta, \delta'$ the breadths of $E$, $F$, respectively. Recall that, by Proposition \ref{prop:limsup}, if $E$ and $F$ are of transcendental type, then $V_E=W_E$ and $V_F=W_F$.

The structure of the proof is as follows:
\begin{itemize}
\item[-] we first prove \ref{prop:equiv-VE:V} $\Longrightarrow$ \ref{prop:equiv-VE:W} $\iff$ \ref{prop:equiv-VE:w} $\Longrightarrow$ \ref{prop:equiv-VE:equiv} in both the algebraic and the transcendental case;
\item[-] then we prove \ref{prop:equiv-VE:equiv} $\Longrightarrow$ \ref{prop:equiv-VE:w} and \ref{prop:equiv-VE:W} $\Longrightarrow$ \ref{prop:equiv-VE:V} in the transcendental case;
\item[-] finally, we prove \ref{prop:equiv-VE:equiv} $\Longrightarrow$ \ref{prop:equiv-VE:limall} $\Longrightarrow$ \ref{prop:equiv-VE:limun} $\Longrightarrow$ \ref{prop:equiv-VE:V} in the algebraic case.
\end{itemize}

\ref{prop:equiv-VE:V} $\Longrightarrow$ \ref{prop:equiv-VE:W} and \ref{prop:equiv-VE:w} $\Longrightarrow$ \ref{prop:equiv-VE:W} are obvious.

\ref{prop:equiv-VE:W} $\Longrightarrow$ \ref{prop:equiv-VE:w} Suppose there is a $\phi\in K(X)$ such that $w_E(\phi)\neq w_F(\phi)$; without loss of generality, $w_E(\phi)>w_F(\phi)$. We claim that there is a $c\in K$ such that $w_E(\phi)\geq v(c)>w_F(\phi)$. This is obvious if $\Gamma$ is dense in $\insR$; otherwise, $\Gamma$ must be isomorphic to $\insZ$, and $V$ is a discrete valuation ring. In this case, the breadth of $E$ and $F$ must be infinite, and thus (by hypothesis) $E$ and $F$ must be transcendental. However, by \cite[Theorem 2]{Kap}, it follows that $V_E=W_E$ is an immediate extension of $V$; in particular, the value group of $W_E$ coincide with $\Gamma$, and thus we can take a $c\in K$ such that $v(c)=w_E(\phi)$. The existence of $c$ implies that $\frac{\phi}{c}\in W_E$ while $\frac{\phi}{c}\notin W_F$, contradicting $W_E=W_F$. Hence, \ref{prop:equiv-VE:w} holds.

\ref{prop:equiv-VE:w} $\Longrightarrow$ \ref{prop:equiv-VE:equiv} By definition, for every $k$ and every $l> 0$,
\begin{equation}\label{deltak'=wEXtk}
\delta'_k=v(t_{k+l}-t_k)=\lim_{n\to\infty}v(t_n-t_k)=w_F(X-t_k)=w_E(X-t_k)=\lim_{n\to\infty}v(s_n-t_k).
\end{equation}
If $\delta_E<\delta_F$, then $\delta'_k>\delta_E$ for large $k$; thus,
\begin{equation*}
v(s_n-t_{k+1})=v(s_n-s_{n+1}+s_{n+1}-t_{k+1})=\delta_n
\end{equation*}
and thus $\delta_n=\delta'_{k+1}$, a contradiction; hence $\delta_E\geq\delta_F$. By symmetry, we have also $\delta_F\geq\delta_E$, and thus $\delta_E=\delta_F=\delta$.

Fix now $k$, take $n'_0$ such that $\delta_n>\delta'_k$ for every $n\geq n'_0$; since $\delta'_m>\delta'_k$ if $m>k$, there are $n_0>n'_0$ and $m_0>k$ such that $v(s_{n_0}-t_{m_0})>\delta'_k$. For all $n\geq n_0$, $m\geq m_0$, we have
\begin{equation*}
v(s_n-t_m)=v(s_n-s_{n_0}+s_{n_0}-t_{m_0}+t_{m_0}-t_m).
\end{equation*}
The three quantities $v(s_n-s_{n_0})$, $v(s_{n_0}-t_{m_0})$ and $v(t_{m_0}-t_m)$ are all bigger than $\delta'_k$; hence, so is $v(s_n-t_m)$. Since $k$ was arbitrary, $E$ and $F$ are equivalent.

Suppose now that $E$ is of transcendental type. If \ref{prop:equiv-VE:W} holds, then by the previous part of the proof also \ref{prop:equiv-VE:equiv} holds; thus, by Lemma \ref{bothtrans bothalgebr} both $E$ and $F$ are of transcendental type, and \ref{prop:equiv-VE:V} follows from Theorem \ref{teor:rank-VE}\ref{teor:rank-VE:trasc}.

If \ref{prop:equiv-VE:equiv} holds, then again $F$ is of transcendental type, and the fact that \ref{prop:equiv-VE:w} holds is exactly \cite[Satz 3.10]{KarpWah} (though note the slight difference in the definition -- see Remark \ref{mistake definition equivalence}); we give here  a proof for the sake of the reader. Without loss of generality, suppose that $K$ is algebraically closed. In order to show that $w_E(\phi)=w_F(\phi)$ for all $\phi\in K(X)$, it is sufficient to show that $w_E(X-\alpha)=w_F(X-\alpha)$ for every $\alpha\in K$. We have
\begin{align*}
w_E(X-\alpha)=\lim_{n\to\infty}v(s_n-\alpha)=v(s_{n}-\alpha),\;\;\forall n\geq n_1\\
w_F(X-\alpha)=\lim_{n\to\infty}v(t_n-\alpha)=v(t_{n}-\alpha),\;\;\forall n\geq m_1
\end{align*}
for some $n_1,m_1\in\N$, since both quantities are eventually constant. We also have that $w_E(X-\alpha)$ and $w_F(X-\alpha)$ are both strictly less $\delta$, since $\alpha\in K$ cannot be a pseudo-limit of $E$ and $F$, respectively. Hence, there exists $k_1\in\N$ such that for all $k>k_1$ we have $\delta_k>w_E(X-\alpha)$ and $\delta_k'>w_F(X-\alpha)$. Let $k>\max\{k_1,n_1,m_1\}$. There exists $k_2\geq k$ such that $\delta_k'<\delta_{k_2}$. Also, there exist $i_0,j_0\in\N$ such that for each $i\geq i_0$ and $j\geq j_0$ we have $v(s_i-t_j)>\delta_k'$. We have
$$v(s_{k_2}-\alpha)=v(s_{k_2}-s_m+s_m-t_m+t_m-\alpha)$$
Choose $m>\max\{k_2,i_0,j_0\}$. Then $v(s_{k_2}-s_m)$ and $v(s_m-t_m)$ are both strictly bigger than $\delta_k'>v(t_m-\alpha)$. Hence, $w_E(X-\alpha)=v(s_{k_2}-\alpha)=v(t_m-\alpha)=w_F(X-\alpha)$, and the claim is proved.

Suppose now that $E$ is of algebraic type. If \ref{prop:equiv-VE:equiv} holds, then by Lemma \ref{bothtrans bothalgebr} also $F$ is of algebraic type, and $E$ and $F$ have the same pseudo-limits with respect to any extension $u$ of $v$ to $\overline{K}$; hence, \ref{prop:equiv-VE:equiv} $\Longrightarrow$ \ref{prop:equiv-VE:limall}. Furthermore, \ref{prop:equiv-VE:limall} $\Longrightarrow$ \ref{prop:equiv-VE:limun} is obvious.

We now show that \ref{prop:equiv-VE:limun} implies \ref{prop:equiv-VE:V}. Let $\phi\in V_E$. By Proposition \ref{prop:qsn}, we have $v(\phi(s_n))=\lambda\delta_n+\gamma$, where $\lambda=\degdom_E\phi$ and $\gamma=u\left(\frac{\phi}{\phi_S}(\beta)\right)\in\Gamma_v$, where $\beta$ is a pseudo-limit of $E$ (and $\phi_S$ is defined as in the proposition). Similarly, $v(\phi(t_n))=\lambda'\delta'_n+\gamma'$; however, since $\limiti_E^v=\limiti_F^u$, it follows that $\lambda=\lambda'$ and $\gamma=\gamma'$. Furthermore, by Lemma \ref{lemma:kaplansky}, $\delta_E=\delta_F$, and thus $v(\phi(s_n))$ and $v(\phi(t_n))$ have the same limit $L$ as $n\to\infty$. Since $\phi\in V_E$, we have $v(\phi(s_n))\geq 0$ for large $n$, and so $L\geq 0$. If $L>0$, then also $v(\phi(t_n))>0$ for large $n$; this implies that $\phi\in V_F$. If $L=0$, then $\lambda\leq 0$; in particular, it must be $v(\phi(t_n))\geq 0$ for large $n$. Again, it follows that $\phi\in V_F$; therefore, $V_E\subseteq V_F$. Symmetrically, $V_F\subseteq V_E$, and thus $V_E=V_F$, as claimed.
\end{proof}

\subsection{A geometric interpretation of equivalence}\label{sect:geomint}
In this section, we give a geometric interpretation of Theorem \ref{teor:equiv-VE}. Let $\insiemeVEalg$ be the set of the valuation domains $V_E$, where $E\subset K$ is a pseudo-convergent sequence of algebraic type. Fix an extension $u$ of $v$ to the algebraic closure $\overline{K}$ of $K$. Then, to every valuation ring $V_E\in\insiemeVEalg$ is uniquely associated its set of pseudo-limits $\limiti_E^u\subset\overline{K}$; furthermore, since $\limiti_E^u=\beta_E+\Br_u(E)$, where $\beta_E\in\overline{K}$ is a pseudo-limit of $E$ with respect to $u$ (see Lemma \ref{lemma:kaplansky}), there is a well-defined and injective map
\begin{equation}\label{Sigma}
\begin{aligned}
\Sigma\colon\insiemeVEalg & \longrightarrow \cball_u(\overline{K})\\
V_E & \longmapsto \limiti_E^u,
\end{aligned}
\end{equation}
where $\cball_u(\overline{K})$ is the set of closed balls of the ultrametric space $\overline{K}$, endowed with the metric induced by $u$.

In general, $\Sigma$ is not surjective; to find its range, we introduce the following definition. For any $\beta\in\overline{K}$, we consider the minimum distance of the elements of $K$ from $\beta$, namely:
\begin{equation*}
d_u(\beta,K)=\inf\{d_u(\beta,x)=e^{-u(\beta-x)}\mid x\in K\}.
\end{equation*}
Note that $d_u(\beta,K)$ may be 0 even if $\beta\notin K$: this happens if and only if $\beta$ is in the completion of $K$ under $v$. If $V$ is a DVR, then the only closed balls of center $\beta\in\overline{K}$ which can arise as the set of pseudo-limits of a pseudo-convergent sequence $E\subset K$, are those of radius $0$ and with $\beta\in\widehat{K}$. If $V$ is non-discrete, we have the following result.

\begin{prop}\label{prop:immSigma}
Let $V$ be a non-discrete rank one valuation domain. Let $\beta\in\overline{K}$, $r\inR^+$ and $u$ an extension of $v$ to $\overline{K}$; let $B$ be the closed ball of center $\beta$ and radius $r$ with respect to $u$. Then, $B=\limiti_E^u$ for some pseudo-convergent sequence $E\subset K$ if and only if $r\geq d_u(\beta,K)$.
\end{prop}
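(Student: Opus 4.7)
The plan is to handle the two implications separately; the forward one is essentially immediate, while the reverse direction requires an explicit construction. For ($\Rightarrow$), suppose $B=\limiti_E^u$ for some pseudo-convergent $E=\{s_n\}_{n\inN}\subset K$ with gauge $\{\delta_n\}_{n\inN}$ and breadth $\delta_E$, so that $r=e^{-\delta_E}$. Since $\beta$ is a pseudo-limit of $E$ with respect to $u$, we have $u(\beta-s_n)=\delta_n$, and $\delta_n\nearrow\delta_E$ gives $d_u(\beta,s_n)=e^{-\delta_n}\searrow r$. Taking the infimum over $n\inN$ yields $d_u(\beta,K)\leq\inf_n d_u(\beta,s_n)=r$.

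For ($\Leftarrow$), let $\delta\inR$ be such that $r=e^{-\delta}$, so the hypothesis $r\geq d_u(\beta,K)$ rewrites as
\begin{equation*}
\sup\{u(\beta-x)\mid x\in K\}\geq\delta,
\end{equation*}
i.e., for every $\eta<\delta$ there exists $x\in K$ with $u(\beta-x)>\eta$. I would build a pseudo-convergent $E=\{s_n\}_{n\inN}\subset K$ realising $u(\beta-s_n)=\delta_n$ for a prescribed strictly increasing sequence $\delta_n\nearrow\delta$. Since $V$ is non-discrete of rank one, $\Gamma_v$ is dense in $\insR$, so such a sequence $\{\delta_n\}_{n\inN}$ with values in $\Gamma_v$ exists. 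For each $n$, the hypothesis (applied with $\eta=\delta_n$) yields $x_n\in K$ with $u(\beta-x_n)>\delta_n$, while $\delta_n\in\Gamma_v$ furnishes $c_n\in K$ with $v(c_n)=\delta_n$. Setting $s_n=x_n+c_n$, the strict inequality $v(c_n)<u(\beta-x_n)$ forces $u(\beta-s_n)=v(c_n)=\delta_n$, and a standard ultrametric computation gives $u(s_{n+1}-s_n)=\min\{\delta_{n+1},\delta_n\}=\delta_n$. Hence $E$ is pseudo-convergent with gauge $\{\delta_n\}_{n\inN}$, breadth $\delta$, and pseudo-limit $\beta$, and Lemma \ref{lemma:kaplansky} gives $\limiti_E^u=B_u(\beta,e^{-\delta})=B$.

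The only subtlety is simultaneously arranging, at each step $n$, the approximant $x_n\in K$ close enough to $\beta$ and the scaling element $c_n\in K$ of prescribed value $\delta_n$. The former is precisely what the hypothesis $r\geq d_u(\beta,K)$ supplies; the latter is where the non-discreteness of $V$ is essential, since it is both what makes $\Gamma_v$ dense (so the prescribed gauge $\{\delta_n\}_{n\inN}$ converging to an arbitrary real $\delta$ can be found inside $\Gamma_v$) and what guarantees the existence of an element of $K$ of any given value in $\Gamma_v$. In the discrete case only breadths lying in $\Gamma_v\cup\{\infty\}$ can be realised, recovering the restriction mentioned in the paragraph preceding the proposition.
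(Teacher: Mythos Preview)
Your proof is correct. The forward direction matches the paper's argument. For the converse, the paper splits into two cases according to whether $r=d_u(\beta,x)$ is actually attained by some $x\in K$: if so, it fixes that single $x$ and adds a sequence $z_k$ with $v(z_k)\nearrow\delta$; if not, it directly picks $s_n\in K$ with $d_u(\beta,s_n)\searrow r$. Your construction unifies the two cases by allowing the approximant $x_n$ to vary with $n$ and then correcting with $c_n$ of prescribed value $\delta_n$, which is a mild streamlining. One small inaccuracy in your closing remark: in the discrete case the gauge is a strictly increasing sequence in $\Gamma_v\cong\insZ$, so the breadth is always $\infty$, not merely in $\Gamma_v\cup\{\infty\}$.
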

\begin{proof}
Suppose $B=\limiti_E^u$, and let $E=\{s_n\}_{n\inN}$. Then, $\{d_u(\beta,s_n)\}_{n\inN}$ is a decreasing sequence of real numbers with limit $e^{-\delta}=r$, where $\delta$ is the breadth of $E$. By definition,
\begin{equation*}
d_u(\beta,K)=\inf\{d_u(\beta,s)\mid s\in K\}\leq d_u(\beta,s_n)
\end{equation*}
for every $n$, and thus $d_u(\beta,K)\leq r$.

Conversely, suppose $r\geq d_u(\beta,K)$. If $r=d(\beta,x)$ for some $x\in K$, take a sequence $Z=\{z_k\}_{k\inN}\subseteq K$ such that $v(z_k)$ is increasing and has limit $\delta=-\log(r)$. Then, $x+Z=\{x+z_k\}_{k\inN}$ is a pseudo-convergent sequence whose set of limits (in ($\overline{K},u)$) is $B$.

If $r\neq d(\beta,x)$ for every $x\in K$, we can take a sequence $E=\{s_n\}_{n\inN}$ such that $d_u(\beta,s_n)=r_n$ decreases to $r$. Then, $E$ is a pseudo-convergent sequence, and $\limiti_E^u=B$.
\end{proof}

If $V$ is not discrete, the next corollary gives a necessary and sufficient condition in order for the map defined in (\ref{Sigma}) to be  surjective. 

\begin{cor}
Suppose $V$ is not discrete. Then the map $\Sigma$ defined in (\ref{Sigma}) is surjective if and only if $\widehat{K}$ is algebraically closed.
\end{cor}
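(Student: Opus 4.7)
The plan is to reduce the claim to Proposition~\ref{prop:immSigma}, which characterizes exactly those closed balls of $\overline{K}$ that arise as $\limiti_E^u$ for some pseudo-convergent sequence $E\subset K$, and then to verify that the characterization ``$r\geq d_u(\beta,K)$'' holds for every ball in $\cball_u(\overline{K})$ precisely when $\overline{K}\subseteq\widehat{K}$ (equivalently, when $\widehat{K}$ is algebraically closed, using the characterization recalled just before the statement).

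For the ``if'' direction, I would assume $\widehat{K}$ is algebraically closed, so $\overline{K}\subseteq\widehat{K}$. Given an arbitrary $B\in\cball_u(\overline{K})$ with center $\beta\in\overline{K}$ and radius $r$, I would observe that $d_u(\beta,K)=0\leq r$ because $\beta\in\widehat{K}$. When $r>0$, Proposition~\ref{prop:immSigma} supplies a pseudo-convergent sequence $E=\{s_n\}_{n\inN}\subset K$ with $\limiti_E^u=B$; to land in $\insiemeVEalg$, I need $E$ to be of algebraic type, which follows because $\beta$ is algebraic over $K$: taking $p\in K[X]$ to be the minimal polynomial of $\beta$, the equality $v(p(s_n))=\degdom_E(p)\,\delta_n+\gamma$ from Proposition~\ref{prop:qsn} together with $\degdom_E(p)\geq 1$ (since $\beta$ is a root of $p$ and a pseudo-limit of $E$) shows that $v(p(s_n))$ is eventually strictly increasing. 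The edge case $r=0$, i.e., the singleton $\{\beta\}$, is not covered by Proposition~\ref{prop:immSigma} and must be handled separately: since $\beta\in\widehat{K}$, I can take $E$ to be any Cauchy sequence in $K$ converging to $\beta$, which is pseudo-convergent of algebraic type (same polynomial argument) and has unique pseudo-limit $\beta$ in $\overline{K}$.

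For the ``only if'' direction, I would pick $\beta\in\overline{K}\setminus\widehat{K}$ so that $c:=d_u(\beta,K)>0$ and consider the ball $B=B_u(\beta,r)$ for any $r$ with $0<r<c$. The choice $r<c$ forces $B\cap K=\emptyset$, and by the ultrametric property any other point $\beta'\in B$ satisfies $d_u(\beta',x)=d_u(\beta,x)\geq c>r$ for every $x\in K$, so $d_u(\beta',K)\geq c>r$ independently of the chosen center. Proposition~\ref{prop:immSigma} then rules out $B\in\mathrm{Im}(\Sigma)$, and $\Sigma$ is not surjective.

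The main subtlety I expect is the verification in the ``if'' direction that the pseudo-convergent sequence furnished by Proposition~\ref{prop:immSigma} can actually be taken of algebraic type, since the proposition itself does not assert this; the minimal-polynomial argument via Proposition~\ref{prop:qsn} is the key input. A secondary technical nuisance is the singleton case $r=0$, which lies outside the scope of Proposition~\ref{prop:immSigma} (stated for $r\in\R^+$) and must be supplied by an ad hoc Cauchy-sequence construction.
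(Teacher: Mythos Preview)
Your proof is correct and follows the same route as the paper, which simply invokes Proposition~\ref{prop:immSigma} together with the equivalence $\overline{K}\subseteq\widehat{K}\iff\widehat{K}$ is algebraically closed. One simplification: the algebraic-type verification you flag as the ``main subtlety'' is immediate from the background material, since any pseudo-convergent sequence with a pseudo-limit in $\overline{K}$ is automatically of algebraic type (transcendental-type sequences admit pseudo-limits only in transcendental extensions), so the minimal-polynomial argument via Proposition~\ref{prop:qsn} is unnecessary.
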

\begin{proof}
By Proposition \ref{prop:immSigma}, $\Sigma$ is surjective if and only if $\widehat{K}$ contains an algebraic closure of $K$. This happens if and only if $\widehat{K}$ is algebraically closed.
\end{proof}

\subsection{Extension of an Ostrowski valuation}
Let $w_E$ be the Ostrowski valuation on $K(X)$ associated to a pseudo-convergent sequence $E\subset K$, and let $u$ be an extension of $v$ to $\overline{K}$. The \emph{extension of $w_E$ to $\overline{K}(X)$ along $u$} is the valuation $\overline{w}_E$ defined by
\begin{equation*}
\overline{w}_E(\psi)=\lim_{n\to\infty} u(\psi(s_n))
\end{equation*}
for every $\psi\in\overline{K}(X)$. Clearly, $\overline{w}_E$ extends $w_E$ and has rank 1. A consequence of Theorem \ref{teor:equiv-VE} is that, if $E$ and $F$ are two pseudo-convergent sequences in $K$, the equality $w_E=w_F$ implies $\overline w_E=\overline w_F$, since these equalities are both equivalent to the fact that $E$ and $F$ are equivalent pseudo-convergent sequences (which does not depend on the field containing $E$ and $F$).

We show now that any extension of an Ostrowski valuation on $K(X)$ to $\overline K(X)$ is of this kind.

\begin{Thm}\label{Teor:extension wE}
Let $E=\{s_n\}_{n\in\N}\subset K$ be a pseudo-convergent sequence such that the associated map $w_E$ is a valuation. If $\overline{w}$ is an extension of $w_E$ to $\overline{K}(X)$ and $u$ is the restriction of $\overline{w}$ to $\overline{K}$, then $\overline{w}$ is equivalent to the extension $\overline{w}_E$ of $w_E$ to $\overline{K}(X)$ along $u$ (or, equivalently, the valuation domain of $\overline{w}$ is equal to $\overline{W}_E$).
\end{Thm}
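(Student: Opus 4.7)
My plan is to show that $\overline{w}$ and $\overline{w}_E$ agree on every linear polynomial $X-\alpha$ with $\alpha\in\overline{K}$. Since both valuations restrict to $u$ on $\overline{K}$ and every nonzero rational function in $\overline{K}(X)$ factors over $\overline{K}$ as a constant times a product $\prod_i(X-\alpha_i)^{\pm1}$, agreement on such linear factors will propagate to all of $\overline{K}(X)$, giving the desired equality of the valuation domains.

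For the comparison I would use, for each $n$, the decomposition $X-\alpha=(X-s_n)+(s_n-\alpha)$, together with $\overline{w}(X-s_n)=w_E(X-s_n)=\delta_n$ (because $X-s_n\in K(X)$) and $\overline{w}(s_n-\alpha)=u(s_n-\alpha)$ (because $\overline{w}|_{\overline{K}}=u$). When $\alpha\notin\mathcal{L}_E^u$, which is automatic for every $\alpha$ if $E$ is of transcendental type, standard pseudo-convergence facts give that $u(s_n-\alpha)$ is eventually a constant $\gamma<\delta_E$; for $n$ large enough that $\delta_n>\gamma$, the ultrametric inequality forces $\overline{w}(X-\alpha)=\gamma$, which equals $\overline{w}_E(X-\alpha)=\lim_n u(s_n-\alpha)$ by the definition of $\overline{w}_E$.

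The only remaining case is $\alpha\in\mathcal{L}_E^u$, which requires $E$ to be of algebraic type; here the analogous ultrametric bound, applied for all $n$, yields only $\overline{w}(X-\alpha)\geq\delta_E$, while $\overline{w}_E(X-\alpha)=\delta_E$ by Remark \ref{Rem:osservazioni su wE}. To upgrade to equality I would exploit that $\alpha$ is algebraic over $K$: let $p(X)\in K[X]$ be its minimal polynomial and factor $p(X)=c\prod_{i=1}^m(X-\alpha_i)$ over $\overline{K}$, with $c\in K^*$ and $\alpha_1=\alpha$. Since $p\in K[X]$ and both $\overline{w}$ and $\overline{w}_E$ extend $w_E$, we have
$$\sum_{i=1}^m\overline{w}(X-\alpha_i)=w_E(p)-u(c)=\sum_{i=1}^m\overline{w}_E(X-\alpha_i).$$
Indices with $\alpha_i\notin\mathcal{L}_E^u$ already contribute equally on both sides by the previous step, while for indices with $\alpha_i\in\mathcal{L}_E^u$ the difference $\overline{w}(X-\alpha_i)-\overline{w}_E(X-\alpha_i)$ is non-negative; a sum of non-negative quantities that vanishes must be termwise zero, yielding $\overline{w}(X-\alpha)=\delta_E$ as required.

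The hard part is precisely this upgrading step: the local ultrametric computation only produces a lower bound on $\overline{w}(X-\alpha)$ for pseudo-limits $\alpha$, and the only way I see to pin the value down exactly is through the global relation that the minimal polynomial imposes, which couples the values at all $K$-conjugates of $\alpha$ simultaneously. Once these linear evaluations are known, the reduction in the first paragraph finishes the argument, with no separate treatment needed for the transcendental case beyond the observation that $\mathcal{L}_E^u=\emptyset$ in that situation.
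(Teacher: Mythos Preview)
Your argument is correct and takes a genuinely different route from the paper. The paper proceeds abstractly: given another extension $w'$ of $w_E$ restricting to $u$ on $\overline{K}$, it invokes the Bourbaki result that any two extensions of $w_E$ to the algebraic extension $\overline{K}(X)$ are conjugate by some $K(X)$-automorphism $\sigma$, and then checks that such a $\sigma$ must send $\overline{W}_E$ to itself (using that $\sigma$ fixes each $s_n$ and that $u\circ\sigma=u$ on $\overline{K}$). Your approach is instead a direct computation of $\overline{w}(X-\alpha)$ via the splitting $X-\alpha=(X-s_n)+(s_n-\alpha)$ and the identification of the two summands' values through the restrictions of $\overline{w}$ to $K(X)$ and to $\overline{K}$; the minimal-polynomial trick for pseudo-limits---comparing $\sum_i\overline{w}(X-\alpha_i)$ with $\sum_i\overline{w}_E(X-\alpha_i)$ and using that the discrepancy is a vanishing sum of non-negative terms---is a nice device that the paper does not use. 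Your proof is more elementary and self-contained (no appeal to conjugacy of extensions), and it yields the exact value $\overline{w}(X-\alpha)=\delta_E$ for $\alpha\in\mathcal{L}_E^u$ as a by-product; the paper's proof is shorter and more conceptual but leans on the external structural fact. One small point worth making explicit: since $\overline{K}(X)/K(X)$ is algebraic, $\overline{w}$ has rank one, so its value group embeds in $\mathbb{R}$ compatibly with $\Gamma_{w_E}$, which is what allows you to pass from $\overline{w}(X-\alpha)\geq\delta_n$ for all $n$ to $\overline{w}(X-\alpha)\geq\delta_E$ and to make the ``sum of non-negatives'' argument in $\mathbb{R}$.
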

\begin{proof}
The valuation $\overline{w}_E$ restricts of $w_E$ on $K(X)$ and to $u$ on $\overline{K}$. Suppose there is another valuation $w'$ on $\overline K(X)$ with these properties: then, by \cite[Chapt. VI, \S 8, 6., Corollaire 1]{Bourb}, there is a $K(X)$-automorphism $\sigma$ of $\overline{K}(X)$ such that $w'\circ\sigma$ is equivalent to $\overline{w}_E$, that is, $\rho(\overline{W}_E)=W'$, where $\rho=\sigma^{-1}$ and $W'$ is the valuation ring of $w'$.

Now
\begin{equation*}
\begin{aligned}
\rho(\overline{W}_E)= & \{\rho\circ\phi\in \overline{K}(X)\mid \lim_nu(\phi(s_n))\geq 0\}=\\
& \{\rho\circ\phi\in \overline{K}(X)\mid \lim_nu\circ\sigma\circ\rho(\phi(s_n))\geq 0\}.
\end{aligned}
\end{equation*}
Since $s_n\in K$ and $\rho|_K$ is the identity, $\rho(\phi(s_n))=(\rho\circ\phi)(s_n)$; hence,
\begin{equation*}
\begin{aligned}
\rho(\overline{W}_E)= & \{\rho\circ\phi\in \overline{K}(X)\mid \lim_n(u\circ\sigma)((\rho\circ\phi)(s_n))\geq 0\}=\\
& \{\psi\in \overline{K}(X)\mid \lim_n(u\circ\sigma)(\psi(s_n))\geq 0\}.
\end{aligned}
\end{equation*}

Since both $\overline{W}_E$ and $W'=\rho(\overline{W}_E)$ are extensions of $U$, the valuation domain of $u$, we have $u(t)=(u\circ\sigma)(t)$ for every $t\in\overline{K}$; in particular, this happens for $t=\psi(s_n)$. It follows that $\rho(\overline{W}_E)=W'=\overline{W}_E$, as claimed.
\end{proof}

\begin{oss}
We note that it is possible for two valuations $w_1,w_2$ of $\overline{K}(X)$ to be different even if their restriction to $K(X)$ and $\overline{K}$ are equal. For example, let $v$ be a valuation on $K$, and let $w$ be an extension of $v$ to $K(X)$. If $K$ is complete under the topology induced by $v$, then there exists a unique extension of $v$ to $\overline{K}$; on the other hand, $w$ can have more than one extension to $\overline{K}(X)$.

For an explicit example, suppose that $K$ is complete under $v$, let $\overline{v}$ be the unique extension of $v$ to $\overline{K}$ and let $\overline{V}$ be the valuation domain of $\overline{K}$ associated to $\overline{v}$. Let $\alpha,\beta\in\overline{K}$  be two distinct elements which are conjugate over $K$, and let $w$ be the valuation associated to the valuation domain
\begin{equation*}
W=\{\phi\in K(X)\mid \phi(\alpha)\in \overline V\}=\{\phi\in K(X)\mid \phi(\beta)\in \overline V\};
\end{equation*}
note that the second equality follows from the fact that $\alpha$ and $\beta$ are conjugate over $K$ (see also \cite[Theorem 3.2]{PerTransc}, where such valuation domains are studied; note that they belong  to the same class  of the valuation domains considered in Remark \ref{remark:BrE=0}). Then, $W$ extends to the following valuation rings  of $\overline{K}(X)$:
\begin{equation*}
\overline{W}_\alpha=\{\psi\in \overline{K}(X)\mid \psi(\alpha)\in \overline{V}\},\;\;\overline{W}_\beta=\{\psi\in \overline{K}(X)\mid \psi(\beta)\in \overline{V}\}.
\end{equation*}
%
However, $\overline{W}_\alpha\neq\overline{W}_\beta$: for example, if $t\in\overline{K}$ satisfies $\overline v(t)>\overline v(\beta-\alpha)$, then $f(X):=\inv{t}(X-\alpha)$ belongs to $\overline{W}_\alpha$ but not to $\overline{W}_\beta$ (again, the same conclusion follows from the aforementioned result \cite[Theorem 3.2]{PerTransc}).
\end{oss}

By means of Theorem \ref{Teor:extension wE}, in the next result without loss of generality we assume that the extension of $w_E$ to $\overline{K}(X)$ is equal to $\overline{w}_E$ (for some extension $u$ of $v$ to $\overline{K}$; clearly, $u=(\overline{w}_E)_{\mid\overline{K}}$).

The following is a variant of Theorem \ref{teor:linear}.
\begin{prop}\label{prop:linear wE}
Let $\phi\in K(X)$ and let $E=\{s_n\}_{n\in\N}\subset K$ be a pseudo-convergent sequence. Let $\overline w_E$ be an extension of $w_E$ to $\overline{K}(X)$, and let $\theta_1,\theta_2\inR$ be  such that $C=\{t\in \overline K\mid\theta_1<\overline w_E(X-t)<\theta_2\}$ does not contain any critical point of $\phi$. Then, there are $\lambda\inZ$, $\gamma\in\insQ\Gamma_v$ such that
\begin{equation*}
v(\phi(t))=\lambda w_E(X-t)+\gamma
\end{equation*}
for every $t\in K\cap C$. More precisely, if $S$ is the multiset of critical points $\alpha$ of $\phi$ such that $\overline w_E(X-\alpha)\geq\theta_2$, then $\lambda$ is the weighted sum of $S$ and $\gamma=\overline w_E\left(\frac{\phi}{\phi_{S}}\right)$.
\end{prop}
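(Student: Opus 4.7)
The plan is to mimic the proof of Theorem \ref{teor:linear} almost verbatim, but with the center-free "annular" condition encoded through $\overline{w}_E(X-t)$. Factor $\phi(X)$ over $\overline{K}$ as $\phi(X)=c\prod_{i=1}^n(X-\alpha_i)^{\epsilon_i}$, where the $\alpha_i$ are the critical points of $\phi$ and $\epsilon_i\in\{\pm 1\}$. For any $t\in K\cap C$ we have
\begin{equation*}
v(\phi(t))=u(c)+\sum_{i=1}^n \epsilon_i\, u(t-\alpha_i),
\end{equation*}
so the task is to express each term $u(t-\alpha_i)$ either as a constant or as $w_E(X-t)$.

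The crucial identity is $u(t-\alpha_i)=\overline{w}_E\bigl((X-\alpha_i)-(X-t)\bigr)$, which simply uses that $\overline{w}_E$ restricts to $u$ on $\overline{K}$. Since $t\in C$ forces $\theta_1<\overline{w}_E(X-t)<\theta_2$ and the hypothesis on $C$ rules out any critical point $\alpha_i$ with $\theta_1<\overline{w}_E(X-\alpha_i)<\theta_2$, each $\alpha_i$ falls into one of two mutually exclusive regimes in which the comparison is strict:
\begin{itemize}
\item if $\overline{w}_E(X-\alpha_i)\leq\theta_1<\overline{w}_E(X-t)$, the strong triangle inequality gives $u(t-\alpha_i)=\overline{w}_E(X-\alpha_i)$, a constant independent of $t$;
\item if $\overline{w}_E(X-\alpha_i)\geq\theta_2>\overline{w}_E(X-t)$, the same inequality yields $u(t-\alpha_i)=\overline{w}_E(X-t)=w_E(X-t)$.
\end{itemize}

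Collecting the contributions, the second regime is exactly the set $S$ of critical points with $\overline{w}_E(X-\alpha_i)\geq\theta_2$, and it contributes $\bigl(\sum_{\alpha_i\in S}\epsilon_i\bigr)\,w_E(X-t)=\lambda\,w_E(X-t)$; the first regime contributes the constant $u(c)+\sum_{\alpha_i\notin S}\epsilon_i\,\overline{w}_E(X-\alpha_i)$, which, by the same factorisation applied to $\phi/\phi_S=c\prod_{\alpha_i\notin S}(X-\alpha_i)^{\epsilon_i}$, equals $\overline{w}_E(\phi/\phi_S)=\gamma$. Hence $v(\phi(t))=\lambda\,w_E(X-t)+\gamma$ on $K\cap C$, with $\lambda\in\Z$ by construction and $\gamma\in\insQ\Gamma_v$ because the $\overline{w}_E(X-\alpha_i)$ for $\alpha_i\notin S$ are values taken by $u$ on $\overline{K}$.

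There is no serious obstacle: the whole argument hinges on the observation that the hypothesis on $C$ automatically forces a \emph{strict} inequality between $\overline{w}_E(X-\alpha_i)$ and $\overline{w}_E(X-t)$ for every critical point $\alpha_i$, so the ultrametric identity applies in both regimes. The only conceptual step beyond Theorem \ref{teor:linear} is recognising that $\overline{w}_E(X-\cdot)$ is the right substitute for $u(\cdot-s)$; once this is done, the proof of Theorem \ref{teor:linear} transfers line by line.
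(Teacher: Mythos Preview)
Your proof is correct and follows essentially the same approach as the paper: factor $\phi$ over $\overline{K}$, use the identity $u(t-\alpha_i)=\overline{w}_E\bigl((X-\alpha_i)-(X-t)\bigr)$ together with the ultrametric inequality to split the critical points into the two regimes $\overline{w}_E(X-\alpha_i)\leq\theta_1$ and $\overline{w}_E(X-\alpha_i)\geq\theta_2$, and collect terms. Your justification that $\gamma\in\insQ\Gamma_v$ (because each $\overline{w}_E(X-\alpha_i)$ with $\alpha_i\notin S$ equals $u(t-\alpha_i)$ for any $t\in K\cap C$) is in fact slightly more explicit than the paper's, which leaves this point implicit.
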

\begin{proof}
Let $\phi(X)=c\prod_{\alpha\in S}(X-\alpha)^{\epsilon_\alpha}\prod_{\beta\in S'}(X-\beta)^{\epsilon_\beta}$, where $S'$ is the multiset of critical points of $\phi$ with $\overline{w}_E<\theta_2$. Let $t\in K\cap C$ and let $u$ be the restriction of $\overline{w}_E$ to $\overline{K}$. As in the proof of Theorem \ref{teor:linear}, writing $u(t-\alpha)=\overline w_E(t-\alpha)=\overline w_E(t-X+X-\alpha)$ we see that $u(t-\alpha)=w_E(X-t)$ if $\overline w_E(X-\alpha)\geq\theta_2$, while $u(t-\alpha)=\overline w_E(X-\alpha)$ if $\overline w_E(X-\alpha)\leq\theta_1$ (note that by assumption on $C$ there is no critical point $\alpha$ of $\phi$ such that $\theta_1<\overline w_E(X-\alpha)<\theta_2$). Hence,
\begin{equation*}
v(\phi(t))=v(c)+\sum_{\alpha\in S}\epsilon_\alpha w_E(X-t)+\sum_{\beta\in S'}\epsilon_\beta \overline w_E(X-\beta) =\lambda w_E(X-t)+\gamma,
\end{equation*}
as claimed.
\end{proof}

Given a pseudo-convergent sequence $E\subset K$, an extension $\overline{w}_E$ of $w_E$ and a rational function $\phi\in K(X)$, we define
$$\delta_{\phi,E}=\max\{\overline w_E(X-\alpha)\mid \alpha\text{~is a critical point of~}\phi\}$$
(which we simply write $\delta_\phi$ if $E$ is understood from the context). By Remark \ref{Rem:osservazioni su wE}\ref{Rem:osservazioni su wE:X},  $\delta_{\phi}\leq\delta$, and $\delta_\phi<\delta$ if no critical point of $\phi$ is a pseudo-limit of $E$; in particular, this happens if $E$ is of transcendental type.

\begin{Cor}\label{cor:constant value phi}
Let $E\subset K$ be a pseudo-convergent sequence and let $\phi\in K(X)$, and suppose that none of the  critical points of $\phi$ is a pseudo-limit of $E$ (with respect to $u=(\overline{w}_E)_{\mid\overline{K}}$). Then:
\begin{enumerate}[(a)]
\item\label{cor:constant value phi:a} if $\delta_{\phi}<w_E(X-t)\leq\delta_E$, then $v(\phi(t))=w_E(\phi)$;
\item\label{cor:constant value phi:b} if $E$ is of algebraic type and $\alpha\in\mathcal{L}_E^u$, then $w_E(\phi)=u(\phi(\alpha))$.
\end{enumerate}
\end{Cor}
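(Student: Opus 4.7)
The plan is that both statements follow directly from Proposition \ref{prop:linear wE} (or its straightforward extension to $\overline{K}(X)$), once one observes that the hypothesis that no critical point of $\phi$ is a pseudo-limit forces $\delta_\phi < \delta_E$. Indeed, by Remark \ref{Rem:osservazioni su wE}\ref{Rem:osservazioni su wE:X}, $\overline w_E(X-\alpha) < \delta_E$ for every $\alpha\in\overline{K}\setminus\mathcal{L}_E^u$, so $\delta_\phi = \max_\alpha\overline w_E(X-\alpha) < \delta_E$ where the max runs over the critical points of $\phi$.

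For part \ref{cor:constant value phi:a}, I would apply Proposition \ref{prop:linear wE} with $\theta_1 = \delta_\phi$ and any $\theta_2$ chosen strictly greater than $w_E(X-t)$. Since every critical point $\alpha$ of $\phi$ satisfies $\overline w_E(X-\alpha)\leq\delta_\phi = \theta_1$, the set $C = \{s\in\overline K\mid\theta_1 < \overline w_E(X-s) < \theta_2\}$ contains no critical point of $\phi$, and by the same inequality the multiset $S$ of critical points $\alpha$ with $\overline w_E(X-\alpha)\geq\theta_2$ is empty. Hence $\lambda = 0$, $\phi_S = 1$, and $\gamma = \overline w_E(\phi) = w_E(\phi)$. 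The hypothesis $\delta_\phi < w_E(X-t) \leq \delta_E$ places $t$ in $K\cap C$, so Proposition \ref{prop:linear wE} yields
\begin{equation*}
v(\phi(t)) = 0\cdot w_E(X-t) + w_E(\phi) = w_E(\phi).
\end{equation*}

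For part \ref{cor:constant value phi:b}, the same linearity argument applies to points of $\overline{K}$ rather than only to $K$: the proof of Proposition \ref{prop:linear wE} factors $\phi$ over $\overline{K}$ and uses only the identity $u(s-\alpha) = \overline w_E(s-X+X-\alpha)$, which is valid for any $s\in\overline{K}$ outside the critical locus. Taking $s = \alpha\in\mathcal{L}_E^u$, we have $\overline w_E(X-\alpha) = \delta_E$ by Remark \ref{Rem:osservazioni su wE}\ref{Rem:osservazioni su wE:X}; choosing $\theta_1 = \delta_\phi$ and $\theta_2 > \delta_E$ in the $\overline K$-version of the proposition gives again $S = \emptyset$, $\lambda = 0$, $\gamma = \overline w_E(\phi) = w_E(\phi)$, whence $u(\phi(\alpha)) = w_E(\phi)$.

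There is no real obstacle here; the only point that needs a little care is justifying the $\overline K$-version of Proposition \ref{prop:linear wE} used in \ref{cor:constant value phi:b}, which I would either invoke by noting that the proof of that proposition is written so as to apply verbatim in $\overline{K}(X)$ with $\overline w_E$ in place of $w_E$, or alternatively deduce from \ref{cor:constant value phi:a} applied in $\overline{K}(X)$ to $\phi$ viewed as a rational function over $\overline{K}$ (using that $\overline w_E$ is the Ostrowski valuation associated to $E$ over $\overline{K}$ and that the critical points of $\phi$ are the same whether we consider $\phi\in K(X)$ or $\phi\in\overline{K}(X)$).
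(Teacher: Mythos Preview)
Your argument is correct. For part \ref{cor:constant value phi:a} you do exactly what the paper does: apply Proposition \ref{prop:linear wE} with $\theta_1=\delta_\phi$, observe that $S=\emptyset$ so $\lambda=0$ and $\gamma=\overline w_E(\phi)=w_E(\phi)$, and conclude.

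For part \ref{cor:constant value phi:b} there is a minor methodological difference. The paper simply cites Proposition \ref{prop:qsn}\ref{prop:qsn:alg}: since no critical point is a pseudo-limit, the set $S$ there is empty, $\phi_S=1$, the dominating degree $\lambda$ is $0$, and hence $w_E(\phi)=0\cdot\delta+\gamma=u(\phi(\alpha))$. You instead rerun the proof of Proposition \ref{prop:linear wE} over $\overline{K}$ with $t=\alpha$. Both approaches are valid and equally short; the paper's route avoids the small justification you flag (that the proof of Proposition \ref{prop:linear wE} goes through unchanged for $t\in\overline{K}$), at the cost of invoking an earlier result. Your route has the aesthetic advantage of treating \ref{cor:constant value phi:a} and \ref{cor:constant value phi:b} uniformly.
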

\begin{proof}
Let $E=\{s_n\}_{n\in\N}$.  Since no critical point $\beta$ of $\phi$ satisfies $\overline{w}_E(X-\beta)\geq\delta_\phi$, by Proposition \ref{prop:linear wE} we have $v(\phi(t))=\overline{w}_E(\phi)=w_E(\phi)$ for every $t\in C=\{s\in\overline{K} \mid \delta_{\phi}<\overline{w}_E(X-s)\leq\delta_E\}$, so claim \ref{cor:constant value phi:a} is proved. Claim \ref{cor:constant value phi:b} follows by Proposition \ref{prop:qsn}\ref{prop:qsn:alg}.
\end{proof}

\section{Spaces of valuation domains associated to pseudo-convergent sequences}\label{sect:spaces}
We are now interested in studying, from a topological point of view, the sets formed by the valuation rings $V_E$ and $W_E$ induced by the pseudo-convergent sequences $E$ in $K$; again we are still assuming that $V$ is a rank one valuation domain. The topologies we are interested in are the Zariski and the constructible topologies (see Section \ref{sect:background} for the definitions). Since we are mainly interested in the former, unless stated otherwise, all the spaces are endowed with the Zariski topology.

We set:
\begin{equation*}
\insiemeVE=\{V_E\mid E\subset K \text{~is a pseudo-convergent sequence}\}
\end{equation*}
and
\begin{equation*}
\mathcal{W}=\{W_E\mid E\subset K \text{~is a pseudo-convergent sequence  and~} w_E\text{~is  a valuation}\}.
\end{equation*}
By the results of Section \ref{sect:WE}, the elements of $\mathcal{W}$ are the rings $W_E$, when $E\subset K$ is a pseudo-convergent sequence which is either of transcendental type or of algebraic type and non-zero breadth ideal. 

When $V$ is discrete, we have the following result.
\begin{teor}\label{teor:Vdisc}
\cite[Theorem 3.4]{PerTransc} Let $V$ be a DVR. Then, $\mathcal{V}$ is homeomorphic to $\widehat{K}$.
\end{teor}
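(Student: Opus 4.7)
The plan is to exhibit the natural map $\Phi\colon\widehat K\to\insiemeVE$ defined by $\Phi(\alpha)=W_\alpha$ and show that it is a homeomorphism.

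First, I would establish that $\Phi$ is a well-defined bijection. Since $V$ is a DVR the value group is $\insZ$, so any strictly increasing sequence of values diverges; consequently every pseudo-convergent sequence $E\subset K$ has zero breadth ideal and is Cauchy with a unique limit $\alpha_E\in\widehat K$, and Remark \ref{remark:BrE=0} gives $V_E=W_{\alpha_E}$. Conversely, for any $\alpha\in\widehat K$, density of $K$ in $\widehat K$ supplies a sequence $\{s_n\}\subset K$ with $v(\alpha-s_n)=n$; by the ultrametric identity this satisfies $v(s_{n+1}-s_n)=n$, hence is pseudo-convergent with limit $\alpha$, so $W_\alpha=V_E\in\insiemeVE$. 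This shows $\Phi$ is well-defined and surjective. For injectivity, given $\alpha\neq\beta$ set $\gamma=v(\alpha-\beta)$, pick $a\in K$ with $v(\alpha-a)>\gamma$ (forcing $v(\beta-a)=\gamma$), and choose $c\in K$ with $v(c)=\gamma$. Then $\phi(X)=c/(X-a)$ satisfies $v(\phi(\alpha))=\gamma-v(\alpha-a)<0$ while $v(\phi(\beta))=\gamma-v(\beta-a)=0$, so $\phi\in W_\beta\setminus W_\alpha$.

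Next I would show $\Phi$ is a homeomorphism by matching subbases. For any $\phi\in K(X)$, the preimage $\Phi^{-1}(B(\phi)\cap\insiemeVE)=\{\alpha\in\widehat K\mid v(\phi(\alpha))\geq 0\}$ is open in $\widehat K$, because $\phi$ is continuous as a rational map from $\widehat K$ to $\widehat K\cup\{\infty\}$ (its finitely many poles sit outside this set) and $\widehat V\subset\widehat K$ is open. Hence $\Phi$ is continuous. For continuity of $\Phi^{-1}$, a basis for the topology of $\widehat K$ is given by the balls $B(a,n)=\{\beta\in\widehat K\mid v(\beta-a)\geq n\}$ with $a\in K$ and $n\in\insZ$: density of $K$ in $\widehat K$ allows one to recenter any ball at a point of $K$. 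Fixing $c\in K$ with $v(c)=n$ and $\phi(X)=(X-a)/c$, a direct computation yields $\Phi^{-1}(B(\phi)\cap\insiemeVE)=B(a,n)$. Thus the preimages under $\Phi$ of subbasic open sets of $\insiemeVE$ already contain a basis of $\widehat K$, which forces $\Phi^{-1}$ to be continuous. Combined with the bijectivity and continuity of $\Phi$, this yields the homeomorphism.

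I do not expect a serious obstacle: once the translation between pseudo-convergent sequences in a DVR and their limits in $\widehat K$ is in place via Remark \ref{remark:BrE=0}, the argument reduces to the observation that rational functions of the form $(X-a)/c$ with $a\in K$ and $c\in K^*$ suffice to recover every basic open ball of $\widehat K$ as a preimage under $\Phi$. The only care-points are verifying that the constructed sequences are genuinely pseudo-convergent (strictly increasing consecutive differences, via the ultrametric identity) and that the rational functions $\phi$ are interpreted correctly at possibly transcendental points of $\widehat K$, both of which are automatic.
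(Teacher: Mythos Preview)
The paper does not actually prove this theorem; it is quoted from \cite[Theorem 3.4]{PerTransc}, and the paper only adds the remark that the homeomorphism sends $W_\alpha$ to $\alpha$. Your proposal supplies exactly this map and a complete verification, so it is correct and matches the approach the paper sketches.

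One tiny point of care: density of $K$ in $\widehat K$ only guarantees $s_n\in K$ with $v(\alpha-s_n)\geq n$, not equality; to force $v(\alpha-s_n)=n$ exactly (so that the ultrametric identity gives $v(s_{n+1}-s_n)=n$ strictly increasing), add $\pi^n$ to any $s_n$ that lands too close, where $\pi$ is a uniformizer. Everything else goes through as you wrote.
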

The homeomorphism can also be described explicitly: indeed, if $V$ is a DVR then $\mathcal{V}$ contains only the rings of the form $W_\alpha$ (see Remark \ref{remark:BrE=0}) and we just send $W_\alpha$ to $\alpha$. Furthermore, in this context, $\mathcal{W}$ is a subset of $\mathcal{V}$, and corresponds to the elements of $\widehat{K}$ that are transcendental over $V$. In view of these facts, we are mainly interested in the case when $V$ is not discrete.

\begin{oss}
If $V$ is discrete and $\widehat{K}$ is algebraic over $K$ (for example, if $K$ is complete; however, this is not a necessary condition, see \cite[\textsection 1, p. 394]{ribenboim-art}) then $\widehat{K}$ contains no elements that are transcendental over $K$, and thus $\mathcal{W}$ is empty. Conversely, if $\mathcal{W}$ is empty then $V$ must be discrete, otherwise we have rings $W_E$ coming from non-Cauchy pseudo-convergent sequences $E$, and $\widehat{K}$ must be algebraic over $K$.
\end{oss}

From now on, we assume that $\mathcal{W}$ is nonempty.

We start by studying $\mathcal{W}$: indeed, the fact that every Ostrowski valutation domain $W_E$ has rank one has strong consequences on the topology of $\mathcal{W}$. Recall that a topological space $X$  is said to be  \emph{zero-dimensional} if it is $T_1$ and each point $x\in X$ has a neighborhood base consisting of open-closed sets, or, equivalently, if, for each $x\in X$ and closed set $C\subset X$, there exists an open-closed set containing $x$ and not meeting $C$  \cite[f-6]{encytop}.

\begin{prop}\label{prop:spazioW}
The Zariski and the constructible topologies agree on $\mathcal{W}$. In particular, $\mathcal{W}$ is a zero-dimensional space.
\end{prop}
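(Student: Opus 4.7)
\medskip

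\noindent\textbf{Proof plan for Proposition \ref{prop:spazioW}.}

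The plan is to show that every subbasic Zariski open set $B(\phi)\cap\mathcal{W}$, with $\phi\in K(X)$, is already closed in $\mathcal{W}$ with respect to the Zariski topology. Once this is established, finite intersections of such sets are clopen, so they form a clopen basis for the Zariski topology on $\mathcal{W}$. In particular, every set of the shape $B(\phi_1,\ldots,\phi_k)$ is Zariski-clopen, which means the constructible topology on $\mathcal{W}$ is no finer than the Zariski one; since the reverse inclusion always holds, the two topologies agree.

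To prove $B(\phi)\cap\mathcal{W}$ is Zariski-closed, the key input is that every $W_E\in\mathcal{W}$ has rank $1$ (by construction, since $w_E\colon K(X)\to\R\cup\{\infty\}$ is a rank one valuation). Fix any $a\in M\setminus\{0\}$, so that $v(a)>0$ and hence $w_E(a)=v(a)>0$ for every $W_E\in\mathcal{W}$. For $\phi\in K(X)$, the condition $\phi\notin W_E$ is equivalent to $w_E(\phi)<0$, and, since the value group of $w_E$ sits inside $\R$ with $v(a)>0$ a fixed positive element, this in turn is equivalent to
\begin{equation*}
\exists\,n\geq 1\ :\ -n\,w_E(\phi)\geq v(a),\quad\text{i.e.}\quad \phi^{-n}a^{-1}\in W_E.
\end{equation*}
Consequently
\begin{equation*}
\mathcal{W}\setminus B(\phi)=\bigcup_{n\geq 1}\bigl(B(\phi^{-n}a^{-1})\cap\mathcal{W}\bigr),
\end{equation*}
which is a union of Zariski open sets and hence Zariski open. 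This gives the desired clopenness of $B(\phi)\cap\mathcal{W}$.

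Finally, for zero-dimensionality, note that the constructible topology on $\Zar(K(X)|V)$ is Hausdorff, so its restriction to $\mathcal{W}$ is Hausdorff and thus $T_1$; by the coincidence just proved, $\mathcal{W}$ is Hausdorff also in the Zariski topology. Combining this with the clopen basis produced above yields the zero-dimensionality in the sense recalled in the statement.

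The only mildly delicate point is the rank one trick in the middle step: one has to observe that a single $a\in M\setminus\{0\}$ suffices uniformly over all $W_E\in\mathcal{W}$, which is exactly where the rank one hypothesis on every $W_E$ (and the fact that every $W_E$ lies over the same $V$) is used. Everything else is a formal manipulation of the definitions of the Zariski and constructible topologies.
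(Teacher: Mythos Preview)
Your proof is correct. The paper's own proof is a one-line citation of \cite[Proposition~2.4(b)]{olberding-PIT}, invoking the general fact that on any set of rank-one valuation rings whose maximal ideals have nonzero intersection the Zariski and constructible topologies agree; you have instead written out the argument directly. The mathematical content is the same---the key inputs are exactly that every $W_E$ has rank one and that a single $a\in M\setminus\{0\}$ lies in every maximal ideal---but your version is self-contained, whereas the paper outsources it. What your approach buys is transparency (and independence from the reference); what the paper's buys is brevity. Your Archimedean trick $\mathcal{W}\setminus B(\phi)=\bigcup_{n\geq1}B(\phi^{-n}a^{-1})\cap\mathcal{W}$ is precisely the computation underlying the cited result in this case.
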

\begin{proof}
The intersection of the maximal ideals of the elements of $\mathcal{W}$ contains the maximal ideal $M$ of $V$, and thus it is nonzero. Since every $W_E$ has rank 1, the claims follow by \cite[Proposition 2.4(b)]{olberding-PIT} and the definition of zero-dimensional space.
\end{proof}

Let $S\subseteq K$ be a subset. The ring of \emph{integer-valued rational functions} on $S$ is the ring
$$\Int^R(S,V)=\{\phi\in K(X) \mid \phi(S)\subseteq V\}$$
(see \cite[Chapter X]{CaCh} for a general reference). Note that $\Int^R(S,V)$ may be equal only to $V$ (for example if $S=K$ and $K$ is algebraically closed, see \cite[Proposition 2.4]{CahLop}).

\begin{prop}\label{prop:Wcompatto}
The space $\mathcal{W}$ is not compact.
\end{prop}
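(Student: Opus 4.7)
The plan is to exhibit a Zariski-open cover of $\mathcal{W}$ with no finite subcover. The underlying idea is that every $w_E$ in $\mathcal{W}$ is a genuine rank-one valuation, so $w_E(X) \in \R$ is finite; however, by rescaling the defining pseudo-convergent sequence we can push $w_E(X)$ arbitrarily far toward $+\infty$. Thus basic open sets of the form $B(a/X) = \{W : a/X \in W\}$, which correspond to upper bounds $w_E(X) \leq v(a)$, will give the desired cover.

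Concretely, since $\Gamma_v$ is a nontrivial subgroup of $\R$, I pick $a_n \in K^*$ with $v(a_n) \nearrow +\infty$ and set
$$U_n = B(a_n/X) \cap \mathcal{W}.$$
For $W_E \in \mathcal{W}$, the condition $a_n/X \in W_E$ rewrites as $w_E(X) \leq v(a_n)$; since $w_E$ is a valuation (not merely a pseudo-valuation) and $X \neq 0$, the value $w_E(X) \in \R$ is finite, so indeed $\bigcup_n U_n = \mathcal{W}$.

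To show no finite subfamily suffices, fix any $W_E \in \mathcal{W}$ (which exists by assumption), write $E = \{s_k\}_{k\in\N}$, and consider the scaled sequence $bE := \{bs_k\}_{k\in\N}$ for $b \in K^*$. Since $v(bs_{k+1} - bs_k) = v(b) + v(s_{k+1} - s_k)$, the sequence $bE$ is pseudo-convergent with gauge shifted by $v(b)$, and $\Br(bE) = b \cdot \Br(E)$, so $bE$ has nonzero breadth ideal iff $E$ does. Moreover, via the $K$-algebra automorphism $f(X) \mapsto f(bX)$ of $K[X]$, the relation $v(f(bs_k)) = v(\tilde{f}(s_k))$ shows that $bE$ is of transcendental (resp.\ algebraic) type iff $E$ is. By Proposition \ref{prop:limsup}, $w_{bE}$ is therefore a valuation and $W_{bE} \in \mathcal{W}$. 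Since $s_k \neq 0$ for large $k$ (as $w_E(X) < \infty$), computing termwise gives
$$w_{bE}(X) = \lim_{k \to \infty} v(bs_k) = v(b) + w_E(X),$$
which can be made as large as we wish by choosing $v(b)$ large. Hence no finite $\{U_{n_1}, \ldots, U_{n_k}\}$ covers $\mathcal{W}$, and $\mathcal{W}$ is not compact.

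The main technical obstacle is verifying that the scaling operation $E \mapsto bE$ preserves membership in $\mathcal{W}$; the only delicate situation is the algebraic Cauchy case (zero breadth ideal), where $w_E$ would be only a pseudo-valuation, but the identity $\Br(bE) = b\Br(E)$ shows this pathology is preserved (or avoided) under scaling, so that starting from $E \in \mathcal{W}$ we indeed stay in $\mathcal{W}$.
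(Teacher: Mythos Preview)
Your proof is correct and takes a genuinely different route from the paper's. The paper argues indirectly: it first identifies $\bigcap_{E} W_E$ with the ring $\Int^R(K,V)$ of integer-valued rational functions, and then invokes a result of Olberding (\cite[Theorem~5.3]{olberding-PIT}) saying that compactness of a family of rank-one valuation rings whose maximal ideals have nonzero intersection forces their intersection to be a one-dimensional Pr\"ufer domain; this contradicts the existence of two-dimensional $V_E$ sitting above $\Int^R(K,V)$. Your argument, by contrast, is entirely elementary and self-contained: you produce an explicit Zariski-open cover $\{B(a_n/X)\}_n$ with $v(a_n)\nearrow\infty$ and defeat any finite subcover by the scaling trick $E\mapsto bE$, which shifts $w_E(X)$ by $v(b)$. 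The paper's approach buys a conceptual link to the ring $\Int^R(K,V)$ and to the Pr\"ufer-domain machinery used elsewhere, whereas your approach avoids all external input and makes the non-compactness visible directly at the level of the value $w_E(X)$.
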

\begin{proof}
We claim that
\begin{equation*}
\bigcap_{\substack{E\subset K\\E\textnormal{ pseudo-conv.}}}W_E=\Int^R(K,V),
\end{equation*}
Let $\phi\in \Int^R(K,V)$. Then, clearly $\phi\in V_E\subseteq W_E$ for all pseudo-convergent sequences $E$, by definition of $V_E$.  Conversely, if $\phi(K)\nsubseteq V$, then there is a $t\in K$ such that $\phi(t)\notin V$; since $V$ is closed in $K$ and a rational function induces a continuous function (from the subset of $K$ on which it is defined to $K$), there is a ball $B(t,r)$ such that $\phi(s)\notin V$ for all $s\in B(t,r)$. Choose $s\in B(t,r)$ which is not a critical point of $\phi$ and let $E=\{s_n\}_{n\inN}\subset K$ be a pseudo-convergent Cauchy sequence  with limit $s$. Then $w_E(\phi)=\lim v(\phi(s_n))=v(\phi(s))<0$, that is, $\phi\notin W_E$.

The intersection of the maximal ideals of the Ostrowski valuation overrings is $M\neq(0)$; hence, if $\mathcal{W}$ is compact then by \cite[Theorem 5.3]{olberding-PIT} $\Int^R(K,V)$ is a one-dimensional Pr\"ufer domain with quotient field $K(X)$. Hence all rings between $\Int^R(K,V)$ and $K(X)$ have dimension (at most) $1$. However, if $E$ is a pseudo-convergent Cauchy sequence with limit in $K$, then $V_E$ has dimension $2$ (by Theorem \ref{teor:rank-VE}\ref{teor:rank-VE:alg-inf}) but $\Int^R(K,V)\subset V_E$, a contradiction. Therefore, $\mathcal{W}$ is not compact, as claimed.
\end{proof}

\begin{oss}
When $V$ is discrete and countable, the space $\mathcal{W}$ is not even locally compact. Indeed, as $V$ is discrete, $\mathcal{W}$ is a subset of $\insiemeVE$, and in the homeomorphism of $\mathcal V$ into $\widehat{K}$ (Theorem \ref{teor:Vdisc}) $\mathcal W$ corresponds to the subset $X$ of those elements which are transcendental over $K$.

Furthermore, when $V$ is countable, also the algebraic closure is countable, while the completion $\overline{K}$ and all its open subsets are uncountable. Hence, every open ball contains elements that are transcendental over $K$, and thus $X$ is dense.

If $X$ were locally compact, then by \cite[Theorem 18.4]{willard} it should be an intersection of an open set $O$ and a closed set $C$ of $\widehat{K}$ (since $\widehat{K}$ is Hausdorff). Since $X$ is dense, we should have $C=\widehat{K}$, and thus $X=O$ should be open. However, also $\widehat{K}\setminus X$ is dense (since all elements of $\widehat{K}$ are limits of sequences in $K$) and thus $X$ cannot be open. Therefore, $\mathcal{W}$ is not locally compact.

We conjecture that $\mathcal{W}$, if nonempty, is never locally compact.
\end{oss}

In order to study more closely the Zariski topology, we now want to study convergence of sequences of valuation domains. To do so, we give in the next two lemmas two criteria to establish when $V_E$ belongs to $B(\phi)$. We introduce the following  notation: if $\beta\in K$, $\gamma_1\in\Gamma_v$ and $\gamma_2\in \Gamma_v\cup\{\infty\}$ with $\gamma_1<\gamma_2$, the \emph{annulus} of center $\beta$ and radii $\gamma_1$ and $\gamma_2$ is
\begin{equation*}
\corona_v(\beta,\gamma_1,\gamma_2)=\{s\in K\mid \gamma_1<v(\beta-s)<\gamma_2\}.
\end{equation*}
Note that this definition is a special case of the definition given in (\ref{CbetaDelta}), when $V$ has rank one. When the valuation $v$ is understood from the context, we shall write simply $\corona(\beta,\gamma_1,\gamma_2)$ for $\corona_v(\beta,\gamma_1,\gamma_2)$. 


\begin{lemma}\label{lemma:corona}
Let $E\subset K$ be a pseudo-convergent sequence of algebraic type with breadth $\delta$, let $\beta\in\limiti_E^u$ and let $\phi\in K(X)$. The following are equivalent:
\begin{enumerate}[(i)]
\item\label{lemma:corona:phi} $\phi\in V_E$;
\item\label{lemma:corona:QGamma} there are $\theta_1\in\insQ\Gamma_v$, $\theta_2\in\insQ\Gamma_v\cup\{\infty\}$ such that $\theta_1<\delta\leq\theta_2$ and such that $\phi(s)\in V$ for all $s\in\corona_u(\beta,\theta_1,\theta_2)$;
\item\label{lemma:corona:tau} there is $\tau\in\Gamma_v$, $0<\tau<\delta$ such that $\phi(s)\in V$ for all $s\in\corona_u(\beta,\tau,\delta)$.
\end{enumerate}
\end{lemma}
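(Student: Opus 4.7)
The plan is to establish the cyclic chain (iii) $\Longrightarrow$ (ii) $\Longrightarrow$ (i) $\Longrightarrow$ (iii); the first two are essentially immediate, and the last is the substantive implication, which I would attack with Theorem \ref{teor:linear}.

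For (iii) $\Longrightarrow$ (ii), I would simply take $\theta_1=\tau$ and $\theta_2=\delta$. For (ii) $\Longrightarrow$ (i), the idea is that the gauge $\{\delta_n\}$ is strictly increasing to $\delta$ and $u(s_n-\beta)=\delta_n$ because $\beta\in\limiti_E^u$; since $\theta_1<\delta\leq\theta_2$, for all sufficiently large $n$ we have $\theta_1<\delta_n<\delta\leq\theta_2$, so $s_n\in\corona_u(\beta,\theta_1,\theta_2)$ and thus $\phi(s_n)\in V$, giving $\phi\in V_E$ by definition.

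The main step is (i) $\Longrightarrow$ (iii), where I would apply Theorem \ref{teor:linear} to the convex set $\Delta=(\tau,\delta)$. Since $\phi$ has only finitely many critical points, the set of values $\{u(\alpha-\beta):\alpha\text{ critical}\}$ is finite; therefore, using that $\delta_n\nearrow\delta$ with $\delta_n\in\Gamma_v$, I can choose $\tau=\delta_N\in\Gamma_v$ with $\tau<\delta$ large enough so that no critical point $\alpha$ of $\phi$ satisfies $\tau<u(\alpha-\beta)<\delta$. By the convention $\gamma>\Delta$ iff $\gamma\geq\delta$ for $\Delta=(\tau,\delta)$, the multiset $S$ in Theorem \ref{teor:linear} consists of the critical points with $u(\alpha-\beta)\geq\delta$, which by Lemma \ref{lemma:kaplansky} is exactly the set of critical points that are pseudo-limits of $E$. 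Hence the weighted sum $\lambda$ equals $\degdom_E(\phi)$, and the theorem gives $\gamma\in\insQ\Gamma_v$ with
\begin{equation*}
v(\phi(t))=\lambda\, u(t-\beta)+\gamma \qquad \text{for all } t\in \corona_u(\beta,\tau,\delta)\cap K.
\end{equation*}

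To finish, I would feed the hypothesis $\phi\in V_E$ into this formula via $t=s_n$: for all large $n$, $\lambda\delta_n+\gamma=v(\phi(s_n))\geq 0$. Then I split into cases by sign of $\lambda$. If $\lambda=0$ then $\gamma\geq 0$ and $v(\phi(t))=\gamma\geq 0$ on the entire annulus. If $\lambda<0$, the sequence $\lambda\delta_n+\gamma$ is strictly decreasing with limit $\lambda\delta+\gamma\geq 0$, so for $t\in\corona_u(\beta,\tau,\delta)\cap K$ we have $u(t-\beta)<\delta$, hence $v(\phi(t))=\lambda u(t-\beta)+\gamma>\lambda\delta+\gamma\geq 0$. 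If $\lambda>0$, the sequence is strictly increasing and eventually nonnegative, so I enlarge $\tau$ to a later gauge value $\delta_M\in\Gamma_v$ with $\lambda\delta_M+\gamma\geq 0$ (this remains strictly below $\delta$ and, being larger than the original $\tau$, preserves the absence of critical points in the annulus); then $u(t-\beta)>\delta_M$ forces $v(\phi(t))\geq\lambda\delta_M+\gamma\geq 0$. The main delicacy is precisely this $\lambda>0$ case: one needs $\tau$ both in $\Gamma_v$ and large enough so that the linear function $x\mapsto \lambda x+\gamma$ is nonnegative on $(\tau,\delta)$, and the fact that the gauge $\{\delta_n\}\subset\Gamma_v$ approximates $\delta$ from below makes this possible.
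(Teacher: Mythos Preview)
Your argument for (i) $\Longrightarrow$ (iii) via Theorem \ref{teor:linear} and the case split on the sign of $\lambda$ is correct and is essentially the paper's substantive step. The implication (ii) $\Longrightarrow$ (i) is also fine.

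There is, however, a gap in (iii) $\Longrightarrow$ (ii): you set $\theta_2=\delta$, but the breadth $\delta$ is only the real limit of the gauge $\{\delta_n\}\subset\Gamma_v$ and need not lie in $\insQ\Gamma_v\cup\{\infty\}$ (e.g.\ take $\Gamma_v=\insQ$ and $\delta_n\nearrow\sqrt 2$). When $\delta\notin\insQ\Gamma_v$, the choice $\theta_2=\delta$ is not admissible, and you cannot simply replace it by some $\theta_2\in\insQ\Gamma_v$ with $\theta_2>\delta$, because (iii) tells you nothing about $\phi(s)$ for $s$ with $\delta<u(s-\beta)<\theta_2$. Thus, as written, your cycle does not establish that (ii) follows from the other two conditions.

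The paper avoids this by running the cycle in the opposite direction, (i) $\Longrightarrow$ (ii) $\Longrightarrow$ (iii) $\Longrightarrow$ (i). Then (ii) $\Longrightarrow$ (iii) becomes the trivial shrinking step, and (i) $\Longrightarrow$ (ii) is proved exactly as you prove (i) $\Longrightarrow$ (iii), except that one starts from an annulus $\corona_u(\beta,\zeta_1,\zeta_2)$ with $\zeta_1<\delta\leq\zeta_2$ and $\zeta_1,\zeta_2\in\insQ\Gamma_v$ containing no critical point of $\phi$: the endpoints of $\{h\in(\zeta_1,\zeta_2)\mid\lambda h+\gamma\geq 0\}$ are then either $\zeta_i$ or $-\gamma/\lambda$, hence lie in $\insQ\Gamma_v$, which produces admissible $\theta_1,\theta_2$. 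Your linear formula already extends to such a larger annulus, so the fix to your argument is minor---but as stated, the implication (iii) $\Longrightarrow$ (ii) is not immediate.
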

\begin{proof}
\ref{lemma:corona:phi} $\Longrightarrow$ \ref{lemma:corona:QGamma} Let $\zeta_1<\zeta_2$ be two elements in $\insQ\Gamma_v$ such that $\zeta_1<\delta\leq\zeta_2$ and there is no critical point of $\phi$ in $C=\corona_u(\beta,\zeta_1,\zeta_2)$. By Theorem \ref{teor:linear}, there are $\lambda\inZ$, $\gamma\in\Gamma_v$ such that $v(\phi(s))=\lambda u(\beta-s)+\gamma$ for all $s\in C$. Let $I=\{h\in(\zeta_1,\zeta_2)\mid \lambda h+\gamma\geq 0\}$; then, $I$ is an interval with endpoints $\theta_1,\theta_2\in\insQ\Gamma_v$, and $\phi(s)\in V$ for all $s\in C'=\corona_u(\beta,\theta_1,\theta_2)$; we need only to show that $\theta_1<\delta\leq\theta_2$.

Since $\phi\in V_E$, and $s_n\in C'$ for large $n$, we have $\lambda\delta_n+\gamma\geq 0$ for all $n$ large enough, where $\{\delta_n\}_{n\inN}$ is the gauge of $E$; since $\delta_n\nearrow\delta$ and $\zeta_1<\delta$, it follows that there is an interval $(\tau,\delta)\subseteq I$, and so $\theta_1<\delta\leq\theta_2$. The claim is proved.

\ref{lemma:corona:QGamma} $\Longrightarrow$ \ref{lemma:corona:tau} is obvious.

\ref{lemma:corona:tau} $\Longrightarrow$ \ref{lemma:corona:phi} Suppose that there is an annulus $C=\corona_u(\beta,\tau,\delta)$ with this property. Since $\delta$ is the breadth of $E$, for large $n$ we have $s_n\in C$; hence, $\phi(s_n)\in V$ and thus $\phi\in V_E$.
\end{proof}

\begin{oss}\label{oss:corona-contrario}
The exact same proof of the previous proposition can be used to show a converse: $\phi\notin V_E$ if and only if there is an annulus $\corona=\corona(\beta,\tau,\delta)$ such that $\phi(t)\notin V$ for all $t\in\corona$ (and similarly for the version with $\theta_1$ and $\theta_2$).
\end{oss}

The following technical lemma is based on the Ostrowski valuation $\overline{w}_E$. Note that the set $C$ defined below is essentially an annulus with respect to $\overline{w}_E$.
\begin{lemma}\label{lemma:corona-phiVEVF}
Let $E=\{s_n\}_{n\inN}\subset K$ be a pseudo-convergent sequence, and let $\phi\in K(X)$; let $u$ be an extension of $v$ to $\overline{K}$ and let $\overline w_E$ be the extension of $w_E$ to $\overline{K}(X)$ along $u$. There is a $\delta'<\delta_E$ such that, given $C=\{s\in\overline{K}\mid \delta'<\overline{w}_E(X-s)<\delta_E\}$, whenever $F$ is a pseudo-convergent sequence such that $\delta_F\geq\delta_E$ and $\limiti_F\cap C\neq\emptyset$, we have $\phi\in V_F$ if and only if $\phi\in V_E$.
\end{lemma}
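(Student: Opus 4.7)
The plan is to apply Proposition~\ref{prop:linear wE} on an annular region around $E$ that avoids the critical points of $\phi$, and then to choose $\delta'$ so that the resulting affine formula $v(\phi(t))=\lambda\,\overline{w}_E(X-t)+\gamma$ has constant sign on $(\delta',\delta_E)$. Let $u$ denote the restriction of $\overline{w}_E$ to $\overline{K}$. By Remark~\ref{Rem:osservazioni su wE}\ref{Rem:osservazioni su wE:X}, each critical point $\alpha$ of $\phi$ satisfies $\overline{w}_E(X-\alpha)\leq\delta_E$, with equality exactly when $\alpha\in\limiti_E^u$; let $S$ be the set of such pseudo-limit critical points, and set
\[
\delta_\phi=\max\{\overline{w}_E(X-\alpha):\alpha\text{ is a critical point of }\phi,\ \alpha\notin S\}
\]
(or $-\infty$ if no such $\alpha$ exists). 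Then $\delta_\phi<\delta_E$, and Proposition~\ref{prop:linear wE} with $\theta_1=\delta_\phi$, $\theta_2=\delta_E$ produces $\lambda=\degdom_E\phi\in\insZ$ and $\gamma=\overline{w}_E(\phi/\phi_S)\in\insQ\Gamma_v$ such that $v(\phi(t))=\lambda\,\overline{w}_E(X-t)+\gamma$ for every $t\in K$ with $\delta_\phi<\overline{w}_E(X-t)<\delta_E$.

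I then pick $\delta'\in[\delta_\phi,\delta_E)$ so that the affine function $\rho\mapsto\lambda\rho+\gamma$ does not vanish on $(\delta',\delta_E)$: if $\lambda=0$ any choice works; if $\lambda\neq0$ and $-\gamma/\lambda<\delta_E$, take $\delta'=\max(\delta_\phi,-\gamma/\lambda)$; otherwise $\delta'=\delta_\phi$. Define $C=\{s\in\overline{K}\mid\delta'<\overline{w}_E(X-s)<\delta_E\}$ as in the statement, and fix a pseudo-convergent $F=\{t_n\}_{n\inN}$ with $\delta_F\geq\delta_E$ and $\beta\in\limiti_F^u\cap C$. The decisive geometric observation is that no critical point of $\phi$ is a pseudo-limit of $F$: otherwise such a critical point $\alpha$ would satisfy $u(\alpha-\beta)\geq\delta_F\geq\delta_E>\overline{w}_E(X-\beta)$, and the ultrametric inequality would force $\overline{w}_E(X-\alpha)=\overline{w}_E(X-\beta)\in(\delta_\phi,\delta_E)$, contradicting the construction of $\delta_\phi$.

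Consequently $\degdom_F\phi=0$, so by Proposition~\ref{prop:qsn} the sequence $v(\phi(t_n))$ is eventually constant. The same ultrametric computation applied with $t_n$ in place of $\alpha$, using $u(\beta-t_n)=\delta_{F,n}\nearrow\delta_F\geq\delta_E>\overline{w}_E(X-\beta)$, gives $\overline{w}_E(X-t_n)=\overline{w}_E(X-\beta)$ for all large $n$; hence $t_n\in C\cap K$ eventually, and the formula from the first paragraph yields
\[
v(\phi(t_n))=\lambda\,\overline{w}_E(X-\beta)+\gamma\quad\text{for all large }n.
\]
So $\phi\in V_F$ if and only if $\lambda\,\overline{w}_E(X-\beta)+\gamma\geq 0$.

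To conclude, Proposition~\ref{prop:qsn} also gives $v(\phi(s_n))=\lambda\delta_n+\gamma$ for large $n$, with $\delta_n\nearrow\delta_E$, so that $\phi\in V_E$ iff $\lambda\delta_n+\gamma\geq0$ eventually. A short split on the sign of $\lambda$, using that the zero $-\gamma/\lambda$ of $\rho\mapsto\lambda\rho+\gamma$ lies outside $(\delta',\delta_E)$ by our choice of $\delta'$, shows that both conditions coincide with ``$\lambda\rho+\gamma\geq 0$ for every $\rho\in(\delta',\delta_E)$'', and in particular for $\rho=\overline{w}_E(X-\beta)$. The main (mild) obstacle is this final case analysis, specifically the borderline situation $\lambda\delta_E+\gamma=0$: there whether $\phi$ lies in $V_E$ is determined by the monotonicity of $\{\lambda\delta_n+\gamma\}_n$ (governed by the sign of $\lambda$), which is precisely what the explicit choice of $\delta'$ is designed to encode on the $F$-side as well.
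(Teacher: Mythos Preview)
Your proof is correct and follows essentially the same route as the paper: apply Proposition~\ref{prop:linear wE} on the annulus between $\delta_\phi$ and $\delta_E$, shrink to a $\delta'$ on which the affine function $\rho\mapsto\lambda\rho+\gamma$ has constant sign, and use the ultrametric identity $\overline{w}_E(X-t_m)=\overline{w}_E(X-\beta)$ to push the $t_m$ into $C$. Your argument that no critical point of $\phi$ is a pseudo-limit of $F$ (hence $\degdom_F\phi=0$) is a correct extra observation, but it is redundant: once you know $t_m\in C$ with $\overline{w}_E(X-t_m)=\overline{w}_E(X-\beta)$, the linear formula already gives $v(\phi(t_m))=\lambda\,\overline{w}_E(X-\beta)+\gamma$ directly, and eventual constancy is immediate. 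The paper's proof is terser precisely because it skips this detour and simply notes that $u(\phi(t))$ has a fixed sign on all of $C$, so both $v(\phi(s_n))$ and $v(\phi(t_m))$ inherit it.
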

\begin{proof}
Let $S\subset\overline K$ be the multiset of critical points of $\phi$, and let $\delta_1=\sup\{\overline{w}_E(X-\alpha)\mid \alpha\in S,~\overline{w}_E(X-\alpha)<\delta_E\}$. Then, there are no critical points of $\phi$ in $C_1=\{s\in\overline{K}\mid \delta_1<\overline{w}_E(X-s)<\delta_E\}$; by Proposition \ref{prop:linear wE}, there are $\lambda\inZ$, $\gamma\in\insQ\Gamma_v$ such that $u(\phi(t))=\lambda\overline{w}_E(X-t)+\gamma$ for every $t\in C_1$.  Since $v(\phi(s_n))\to w_E(\phi)$ and $s_n$ is eventually in $C_1$, we can find $\delta'\in[\delta_1,\delta_E)$ such that the quantity $u(\phi(t))$ is either positive, negative or zero for all $t\in C=\{s\in\overline{K}\mid \delta'<\overline{w}_E(X-s)<\delta_E\}$.

Suppose now $F=\{t_m\}_{m\inN}\subset K$ is a pseudo-convergent sequence with breadth $\delta_F\geq\delta_E$ and such that $\limiti_F\cap C\neq\emptyset$: then, if $t\in\limiti_F\cap C$, we have $\overline{w}_E(X-t_m)=\overline{w}_E(X-t+t-t_m)=\overline{w}_E(X-t)$, for all $m\in\N$ sufficiently large, since $\overline{w}_E(X-t)<\delta_E$ and $v(t_m-t)\nearrow\delta_F$ which is greater than or equal to $\delta$ (and so, it is eventually greater than $\overline{w}_E(X-t)$). Hence, $t_m$ is eventually in $C$ and thus $v(\phi(t_m))$ is eventually nonnegative if so is $v(\phi(s_n))$ (in which case $\phi\in V_E\cap V_F$), while it is eventually negative if $v(\phi(s_n))$ is eventually negative (and so $\phi\notin V_E$ and $\phi\notin V_F$). The claim is proved.
\end{proof}
We note that, when we are in the hypothesis of Corollary \ref{cor:constant value phi} (that is, if $\phi$ has no critical point which is a pseudo-limit of $E$), the value $\delta'$ of the previous proposition can be taken to be equal to $\delta_{\phi,E}$.

As a first step in the study of $\insiemeVE$, we analyze the convergence of sequences in $\Zar(K(X)|V)^\cons$.
\begin{prop}\label{prop:limitcostr}
Let $E=\{s_n\}_{n\inN}\subset K$ be a pseudo-convergent sequence of algebraic type with breadth $\delta$, and, for each $n\in\N$, let $\zeta_n\in [\delta,\infty]$. For each $n\in\N$, let $E_n\subset K$ is a pseudo-convergent sequence with pseudo-limit $s_n$ and breadth $\zeta_n$. Then:
\begin{enumerate}[(a)]
\item $V_E$ is a limit of $\{V_{E_n}\}_{n\in\N}$ in $\Zar(K(X)|V)^\cons$;
\item if $\zeta_n\neq\infty$ for every $n$, then $V_E$ is a limit of $\{W_{E_n}\}_{n\in\N}$ in $\Zar(K(X)|V)^\cons$.
\end{enumerate}
\end{prop}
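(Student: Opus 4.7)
The plan is to reduce convergence in the constructible topology to a pointwise membership condition. Since $\Zar(K(X)|V)^{\cons}$ has as a subbasis of clopens the sets $B(\phi)$ with $\phi \in K(X)$, to prove that $V_E$ is a limit of $\{V_{E_n}\}_n$ (respectively $\{W_{E_n}\}_n$) it suffices to check that for every $\phi \in K(X)$ one has $\phi \in V_E$ if and only if $\phi \in V_{E_n}$ (respectively $\phi \in W_{E_n}$) for all sufficiently large $n$. Fix such a $\phi$ and an extension $u$ of $v$ to $\overline K$.

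For part (a), I would apply Lemma~\ref{lemma:corona-phiVEVF} to $E$ and $\phi$, obtaining $\delta' < \delta_E$ such that, with $C = \{s \in \overline K \mid \delta' < \overline w_E(X - s) < \delta_E\}$, every pseudo-convergent sequence $F \subseteq K$ satisfying $\delta_F \geq \delta_E$ and $\mathcal L_F \cap C \neq \emptyset$ obeys $\phi \in V_F \iff \phi \in V_E$. Now $\overline w_E(X - s_n) = \lim_k v(s_k - s_n) = \delta_n$, and since $\{\delta_n\}$ strictly increases to $\delta_E$ we have $\delta' < \delta_n < \delta_E$ for all large $n$, giving $s_n \in C$. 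Together with $s_n \in \mathcal L_{E_n}$ and $\delta_{E_n} = \zeta_n \geq \delta_E$, this lets me invoke the lemma to conclude $\phi \in V_{E_n} \iff \phi \in V_E$ for all $n$ sufficiently large, which proves (a).

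For part (b), the assumption $\zeta_n \neq \infty$ forces $\delta_E < \infty$ and, via Proposition~\ref{prop:limsup}, each $w_{E_n}$ is a genuine rank one valuation, so $W_{E_n}$ is well-defined. After further shrinking $C$ so that it contains no critical point of $\phi$, Proposition~\ref{prop:linear wE} produces $\lambda \in \Z$ and $\gamma \in \insQ\Gamma_v$ with $v(\phi(t)) = \lambda\, \overline w_E(X - t) + \gamma$ for every $t \in K \cap C$. Writing $E_n = \{t_m^{(n)}\}_m$, for $n$ large and $m$ then sufficiently large one has $v(t_m^{(n)} - s_n) > \delta_n$ (since this quantity tends to $\zeta_n \geq \delta_E > \delta_n$), and the ultrametric inequality applied to $X - t_m^{(n)} = (X - s_n) - (t_m^{(n)} - s_n)$ under $\overline w_E$ forces $\overline w_E(X - t_m^{(n)}) = \overline w_E(X - s_n) = \delta_n$. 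Hence $v(\phi(t_m^{(n)})) = \lambda \delta_n + \gamma$ stabilises in $m$, so $w_{E_n}(\phi) = \lambda \delta_n + \gamma$; by Proposition~\ref{prop:qsn} this equals $v(\phi(s_n))$. Since $\phi \in V_E$ iff $v(\phi(s_n)) \geq 0$ for all but finitely many $n$, the equivalence $\phi \in V_E \iff \phi \in W_{E_n}$ eventually follows.

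The main technical delicacy I expect is the ultrametric stabilisation $\overline w_E(X - t_m^{(n)}) = \delta_n$, which requires simultaneously controlling the two approximations $v(t_m^{(n)} - s_n) \nearrow \zeta_n$ and the fixed value $\delta_n$ for each choice of $n$, ensuring that the dominant term inside $\overline w_E$ is the correct one for the linear formula to be applicable. Once this stabilisation is in hand, both parts assemble cleanly from Proposition~\ref{prop:linear wE}, Lemma~\ref{lemma:corona-phiVEVF}, and Proposition~\ref{prop:qsn}.
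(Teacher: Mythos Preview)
Your proposal is correct and follows the same essential strategy as the paper: reduce to checking membership in $B(\phi)$, invoke Lemma~\ref{lemma:corona-phiVEVF} for part~(a), and for part~(b) show that $w_{E_n}(\phi)=v(\phi(s_n))$ for large $n$ via the linear formula for $v(\phi(t))$ on an annulus. The one organizational difference is that the paper first treats the case where $E$ has a pseudo-limit $s\in K$ (using Lemma~\ref{lemma:corona} to produce the annulus $\corona(s,\tau,\delta)$ and then identifying it with the $\overline w_E$-annulus before applying Lemma~\ref{lemma:corona-phiVEVF}), and then deduces the general case by the continuity of the restriction map $\Zar(\overline{K}(X)|U)^{\cons}\to\Zar(K(X)|V)^{\cons}$. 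You instead apply Lemma~\ref{lemma:corona-phiVEVF} and Proposition~\ref{prop:linear wE} directly, since those are already phrased in terms of $\overline w_E$ and require no pseudo-limit in $K$; this avoids the case split entirely and is slightly more economical. Either way the ultrametric stabilisation $\overline w_E(X-t_m^{(n)})=\delta_n$ that you flag is exactly the content of the paper's line ``$v(t_m-s)=v(t-s)$ for all $m$ sufficiently large'', so there is no hidden difficulty there.
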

Note that, if $V_E$ is a limit in the constructible topology, it is a limit also in the Zariski topology.
\begin{proof}
Let $\mathcal{X}=\Zar(K(X)|V)^\cons$; we need to show that, if $V_E\in\Omega$ for some open set $\Omega$, then $V_{E_n},W_{E_n}\in\Omega$ for large $n$; without loss of generality, we can consider only the cases $\Omega=B(\phi)$ and $\Omega=\mathcal{X}\setminus B(\phi)$, where $\phi\in K(X)$. This amounts to prove that $V_E\in B(\phi)$ if and only if $V_{E_n}\in B(\phi)$ (respectively, $W_{E_n}\in B(\phi)$) for all large $n$.

Suppose first that $E$ has a pseudo-limit $s\in K$. By Proposition \ref{lemma:corona}, there is an annulus $\corona=\corona(s,\tau,\delta)$ such that $\phi(t)\in V$ for all $t\in\corona$. There is a $N$ such that $s_n\in\corona$ for $n\geq N$; hence, for these $n$, $\limiti_{E_n}\cap\corona\neq\emptyset$. For all $t\in\corona$, we have $w_E(X-t)=u(s-t)$; hence, $\corona=\{t\in K\mid \tau<w_E(X-t)<\delta\}$. Therefore, we can apply Lemma \ref{lemma:corona-phiVEVF}, and so $V_E\in B(\phi)$ if and only if $V_{E_n}\in B(\phi)$ for $n\geq N$. Thus, the sequence $V_{E_n}$ tends to $V_E$ in the constructible topology.

Since $V_{E_n}\subseteq W_{E_n}$, we also have that if $V_E\in B(\phi)$ then $W_{E_n}\in B(\phi)$ for large $n$. Furthermore, without loss of generality, $\corona$ does not contain any critical point of $\phi$ and $v(\phi(t))=\lambda v(t-s)+\gamma$, for each $t\in \corona$, for some $\lambda\in\Z$ and $\gamma\in\Gamma_v$ by Theorem \ref{teor:linear}; since $v(t_m-s)=v(t-s)$, for all $m\in\N$ sufficiently large, where $t\in\mathcal{L}_F\cap\corona$, then $v(\phi(t_m))=v(\phi(t))$ for all such $m$, and so $w_F(\phi)=v(\phi(t))$: hence, if $V_E\notin B(\phi)$ then also $W_{E_n}\notin B(\phi)$. It follows that also the sequence $W_{E_n}$ tends to $V_E$ in $\Zar(K(X)|V)^\cons$.

Suppose now that $E$ has a limit $\beta\in\overline{K}$ with respect to some extension $u$ of $v$ to $\overline{K}$; let $U\subset\overline K$ be the valuation domain of $u$. By the previous part of the proof, $U_E$ is the limit of the sequence $U_{E_n}$ in $\Zar(\overline{K}(X)|U)^\cons$. The restriction map $\pi:\Zar(\overline{K}(X)|U)^\cons\longrightarrow\Zar(K(X)|V)^\cons$, $W\mapsto W\cap K(X)$, is continuous; hence, $\pi(U_{E_n})\to \pi(U_E)$. However, $\pi(U_{E_n})=V_{E_n}$ and $\pi(U_E)=V_E$; the claim is proved. The same reasoning applies to the sequence $\{W_{E_n}\}_{n\in\N}$.

The claim about the Zariski topology follows since the constructible topology is finer than the Zariski topology.
\end{proof}

\begin{Ex}\label{ex:seq-Vsn}
Let $E=\{s_n\}_{n\in\N}$ be a pseudo-convergent sequence of algebraic type and, for each $n\in\N$, let $W_{s_n}=\{\phi\in K(X)\mid \phi(s_n)\in V\}$. Then, by the previous lemma, $\{W_{s_n}\}_{n\in\N}$ converges to $V_E$ in the constructible and in the Zariski topology. 
\end{Ex}

Since we are working with the Zariski topology on $\insiemeVE$ and $\mathcal{W}$, for ease of notation we set
\begin{align*}
B^{\insiemeVE}(\phi)=\{V_E\in\insiemeVE\mid V_E\ni\phi\}=B(\phi)\cap\insiemeVE,\\
B^{\mathcal{W}}(\phi)=\{W_E\in\mathcal{W}\mid W_E\ni\phi\}=B(\phi)\cap\mathcal{W}.
\end{align*}

We denote by $\insiemeVE(\bullet,\delta)$ the set of valuation domains $V_E$ such that $E$ has breadth $\delta$.
\begin{prop}\label{prop:costr-VE}
Let $V$ be a valuation domain of rank 1 which is not discrete. The following are equivalent:
\begin{enumerate}[(i)]
\item\label{prop:costr-VE:res} the residue field of $V$ is finite;
\item\label{prop:costr-VE:tutto} the Zariski and the constructible topologies coincide on $\insiemeVE$;
\item\label{prop:costr-VE:leq} there is a $\delta\in\insR\cup\{+\infty\}$ such that the the Zariski and the constructible topologies coincide on $\bigcup_{\delta'\leq\delta}\insiemeVE(\bullet,\delta')$;
\item\label{prop:costr-VE:min} there is a $\delta\in\insR\cup\{+\infty\}$ such that the the Zariski and the constructible topologies coincide on $\bigcup_{\delta'<\delta}\insiemeVE(\bullet,\delta')$.
\end{enumerate}
\end{prop}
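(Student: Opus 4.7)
The implications $(\text{ii})\Rightarrow(\text{iii})\Rightarrow(\text{iv})$ are formal. For $(\text{ii})\Rightarrow(\text{iii})$, taking $\delta=+\infty$ makes $\bigcup_{\delta'\leq\delta}\insiemeVE(\bullet,\delta')=\insiemeVE$, so the hypothesis of $(\text{ii})$ is exactly that of $(\text{iii})$. For $(\text{iii})\Rightarrow(\text{iv})$, any $\delta_0$ witnessing $(\text{iii})$ also witnesses $(\text{iv})$, since $\bigcup_{\delta'<\delta_0}\insiemeVE(\bullet,\delta')$ sits inside $\bigcup_{\delta'\leq\delta_0}\insiemeVE(\bullet,\delta')$ as a subspace, and coincidence of two topologies is inherited by subspaces.

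For $(\text{i})\Rightarrow(\text{ii})$, I would verify that $B^{\insiemeVE}(\phi)$ is Zariski closed in $\insiemeVE$ for every $\phi\in K(X)$. Given $V_E\notin B^{\insiemeVE}(\phi)$, Remark~\ref{oss:corona-contrario} provides an annulus $\corona_u(\beta,\tau,\delta_E)$ on which $\phi$ has negative valuation, with $\beta$ a pseudo-limit of $E$ for some extension $u$ of $v$ to $\overline K$. Choose $\tau',\tau''\in\Gamma_v$ with $\tau<\tau'<\tau''<\delta_E$. The plan is to build finitely many $\psi_j\in V_E$ whose simultaneous containment in some $V_F$ constrains the pseudo-limits of $F$ into a small neighborhood of $\limiti_E$. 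A primary witness $\psi_0$ based on $(X-\beta)/c$ with $v(c)=\tau'$ (or a variant involving the minimal polynomial of $\beta$ when $\beta\notin K$) restricts pseudo-limits of $V_F$ to lie in $B_u(\beta,e^{-\tau'})$. Since $V/M$ is finite, this ball decomposes as a finite disjoint union of closed sub-balls of radius $e^{-\tau''}$; for each sub-ball disjoint from $\limiti_E$ one constructs an additional $\psi_j\in V_E$ (via a polynomial whose zero selects that sub-ball's center at the right scale) excluding pseudo-limits from it. The combined basic Zariski open set $B^{\insiemeVE}(\psi_0,\ldots,\psi_n)$ then contains $V_E$, and by Lemma~\ref{lemma:corona-phiVEVF} (applied to the sub-ball containing $\limiti_E$) none of its points can contain $\phi$.

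For $(\text{iv})\Rightarrow(\text{i})$, I would argue contrapositively: assuming $V/M$ is infinite and fixing $\delta\in\insR\cup\{+\infty\}$, I would exhibit $V_E$ with $\delta_E<\delta$ and $\phi\in K(X)$ with $\phi\notin V_E$, and then show that for any finite family $\psi_1,\ldots,\psi_k\in V_E$ there is a $V_F$ with $\delta_F<\delta$, $\psi_i\in V_F$ for all $i$, and $\phi\in V_F$. The infinitude of $V/M$ enters crucially: within a closed ball $B(\beta,e^{-\sigma})$ where $\sigma$ is the maximum of the outer radii $\tau_{\psi_i}$ provided by Lemma~\ref{lemma:corona}, there are infinitely many residue classes, hence infinitely many candidate positions for $\beta_F$; one can select one avoiding the (finitely many) critical points of the $\psi_i$'s and simultaneously lying in the region where $\phi\in V_F$ is forced. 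A pseudo-convergent sequence $F\subset K$ with pseudo-limit $\beta_F$ and breadth $\delta_F<\delta$ close to $\delta_E$ then supplies the required $V_F$, showing that $B^{\insiemeVE}(\phi)$ is not Zariski closed in $\bigcup_{\delta'<\delta}\insiemeVE(\bullet,\delta')$.

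\emph{Main obstacle.} The core difficulty is $(\text{i})\Rightarrow(\text{ii})$: converting the finiteness of $V/M$ into a \emph{finite} list of rational-function witnesses that excludes all possible ``directions'' for competing pseudo-limits. This requires a careful interplay between the annulus representation of Lemma~\ref{lemma:corona}, the stability result of Lemma~\ref{lemma:corona-phiVEVF}, and the residue-class decomposition of balls in $K$. A mirror delicacy arises in $(\text{iv})\Rightarrow(\text{i})$, where the infinitude of $V/M$ must be genuinely exploited to defeat every finite collection of witnesses.
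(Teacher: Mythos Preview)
Your formal implications $(\text{ii})\Rightarrow(\text{iii})\Rightarrow(\text{iv})$ are fine and match the paper. Your plan for $(\text{iv})\Rightarrow(\text{i})$ is roughly on target, though the paper carries it out with a concrete choice: $\phi=c^{-1}X$ with $v(c)=\eta<\delta$, $E$ with pseudo-limit $0$ and breadth $\eta$, then for any $\psi_1,\dots,\psi_k\in V_E$ the infinite residue field yields $t\in K$ of valuation $\eta$ avoiding all critical points of the $\psi_i$ at that level, so that a pseudo-convergent $F$ with pseudo-limit $t$ and breadth $\eta'\in(\eta,\delta)$ lands in $B(\psi_1,\dots,\psi_k)\cap B(c^{-1}X)$.

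The genuine gap is in $(\text{i})\Rightarrow(\text{ii})$. Your key step---``since $V/M$ is finite, $B_u(\beta,e^{-\tau'})$ decomposes as a finite disjoint union of closed sub-balls of radius $e^{-\tau''}$''---is false for a non-discrete rank-one $V$. The value group $\Gamma_v$ is dense in $\insR$, so for any $\tau'<\tau''$ the quotient $\{x:v(x)\geq\tau'\}/\{x:v(x)\geq\tau''\}$ is infinite regardless of the size of $V/M$ (think of elements whose valuations populate $(\tau',\tau'')$). Thus no finite list of ``sub-ball excluders'' $\psi_j$ can do the job you describe. Your plan also tacitly assumes $E$ has a pseudo-limit $\beta\in\overline K$, so it says nothing when $E$ is of transcendental type.

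The paper proceeds quite differently. It splits into cases: for transcendental $E$ one reduces to $\mathcal W$ via Proposition~\ref{prop:spazioW}; for algebraic $E$ with no pseudo-limit in $K$ a single linear witness $(X-s)/d$ suffices; and the crux---algebraic $E$ with pseudo-limit $s\in K$ and $\delta\in\insQ\Gamma_v$---is handled by a single rational function built from a complete set of residue representatives $u_1,\dots,u_r$ of $V/M$:
\[
\psi(X)=\frac{z^r}{\prod_{i=1}^r\bigl((X-s)^k-zu_i\bigr)},\qquad v(z)=k\delta,
\]
which satisfies $\psi(t)\in V\iff v(t-s)<\delta$. Combining $\psi$ with one linear witness $(X-s)/d$ (for $v(d)\in(\tau,\delta)$) traps $t$ into the annulus $\corona(s,\tau,\delta)$ where $\phi(t)\notin V$, giving a Zariski-open neighborhood of $V_E$ disjoint from $B(\phi)$. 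The point is that finiteness of $V/M$ is used \emph{multiplicatively} to build one indicator function for the single sphere $v(t-s)=\delta$, not to chop a thick shell into finitely many pieces.
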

When $V$ is discrete, $\insiemeVE$ reduces to $\insiemeVE(\bullet,\infty)$.

\begin{proof}
\ref{prop:costr-VE:res} $\Longrightarrow$ \ref{prop:costr-VE:tutto} To show that the Zariski and the constructible topologies coincide, it is enough to show that $B(\phi)$ is closed in the Zariski topology for every $\phi\in K(X)$. Let thus $E=\{s_n\}_{n\inN}$ be a pseudo-convergent sequence with breadth $\delta$ such that $V_E\notin B(\phi)$; we want to show that there is an open neighborhood of $V_E$ disjoint from $B(\phi)$.

If $E$ is of transcendental type, then $V_E=W_E$; since the Zariski and the constructible topologies agree on $\mathcal{W}$ (Proposition \ref{prop:spazioW}), the set $B^{\mathcal{W}}(\phi)$ is closed in $\mathcal{W}$, and thus there are $\psi_1,\ldots,\psi_k$ such that $W_E\in B^{\mathcal{W}}(\psi_1,\ldots,\psi_k)$ but $B^{\mathcal{W}}(\psi_1,\ldots,\psi_k)\cap B^{\mathcal{W}}(\phi)=\emptyset$. In particular, $V_E\in B^{\mathcal{V}}(\psi_1,\ldots,\psi_k)$; on the other hand, if $V_F\in B^{\mathcal{V}}(\psi_1,\ldots,\psi_k)\cap B^{\mathcal{V}}(\phi)$, then $\psi_1,\ldots,\psi_k,\phi\in V_F\subseteq W_F$, and thus $W_F\in B^{\mathcal{W}}(\psi_1,\ldots,\psi_k)\cap B^{\mathcal{W}}(\phi)$, a contradiction.  Hence, $B^{\mathcal{V}}(\psi_1,\ldots,\psi_k)$ and $B^{\mathcal{V}}(\phi)$ are disjoint, and $B^{\mathcal{V}}(\psi_1,\ldots,\psi_k)$ is the required neighborhood.

Suppose $E$ is of algebraic type without pseudo-limits in $K$; let $\alpha\in\overline K\setminus K$ be a pseudo-limit of $E$ with respect to an extension $u$ of $v$ to $\overline{K}$. By Proposition \ref{lemma:corona} and Remark \ref{oss:corona-contrario}, there is an annulus $\corona=\corona_u(\alpha,\theta_1,\theta_2)$  with $\theta_1,\theta_2\in\insQ\Gamma_v$, $\theta_1<\delta\leq\theta_2$, such that $\phi(t)\notin V$ for all $t\in\corona$. Let $s\in\corona$; then $\theta_1<u(\alpha-s)<\delta$, because otherwise $s$ would be a pseudo-limit of $E$. Let $d\in K$ be such that $\theta_1<v(d)<u(\alpha-s)$. Then, $V_E\in B\left(\frac{X-s}{d}\right)$, since, for large $n$, $v(s_n-s)-v(d)=u(s_n-\alpha+\alpha-s)-v(d)=u(\alpha-s)-v(d)\geq 0$. On the other hand, if $t\in K$ is such that $v\left(\frac{t-s}{d}\right)\geq 0$, then $v(t-s)\geq v(d)>\theta_1$, so $u(t-\alpha)=u(t-s+s-\alpha)>\theta_1$. Since $u(t-\alpha)<\delta$ because $E$ has no pseudo-limits in $K$, it follows that $t\in\corona$, so that $\phi(t)\notin V$; in particular, $B\left(\frac{X-s}{d}\right)$ is a neighborhood of $V_E$ disjoint from $B(\phi)$.

Suppose now that $E$ is of algebraic type with a pseudo-limit $s\in K$. If $\delta\notin\insQ\Gamma_v$, then $V_E=W_E$ by Theorem \ref{teor:equiv-VE}, so  the claim follows as in the transcendental case. Suppose $\delta\in\insQ\Gamma_v$, and let $k\inN^+$ be such that $k\delta\in\Gamma_v$. By Proposition \ref{lemma:corona} and Remark \ref{oss:corona-contrario}, there is an annulus $\corona(s,\tau,\delta)$, with $\tau<\delta$, such that $\phi(t)\notin V$ for all $t\in\corona(s,\tau,\delta)$. Let $d\in K$ be an element such that $v(d)\in(\tau,\delta)$; then, $V_E\in B\left(\frac{X-s}{d}\right)$.

Let $u_1,\ldots,u_r$ be a complete set of representatives for the residue field of $V$; suppose that $u_1\in M$ and $u_i\in V\setminus M$ for $i=2,\ldots,r$. Let $z\in K$ be an element of valuation $k\delta$. Let
\begin{equation*}
\psi(X)=\frac{z^{r}}{((X-s)^k-zu_1)\cdots((X-s)^k-zu_r)};
\end{equation*}
we claim that $\psi(t)\in V$ if and only if $v(t-s)<\delta$. 

Indeed, if $v(t-s)<\delta$ then $v((t-s)^k)=kv(t-s)<k\delta\leq v(zu_i)$ for $i=1,\ldots,r$, and thus
\begin{equation*}
v(\psi(t))=rk\delta-rkv(t-s)>0.
\end{equation*}
If $v(t-s)>\delta$, then $v((t-s)^k-zu_i)= k\delta$ for $i=2,\ldots,r$ and $v((t-s)^k-zu_1)>k\delta$, and thus
\begin{equation*}
v(\psi(t))<rk\delta-rk\delta=0.
\end{equation*}
If $v(t-s)=\delta$, then $v((t-s)^k)=k\delta=v(z)$; since $u_1,\ldots,u_r$ are a complete set of representatives, there is a (unique) $i\in\{2,\ldots,r\}$ such that $v((t-s)^k-zu_i)>k\delta$, while $v((t-s)^k-zu_j)=k\delta$ for $j\in\{1,\ldots,r\}\setminus\{i\}$. Hence,
\begin{equation*}
v(\psi(t))=rk\delta-(r-1)k\delta-v((t-s)^k-zu_i)=k\delta-v((t-s)^k-zu_i)<0.
\end{equation*}

In particular, $V_E\in B(\psi)$ by Proposition \ref{lemma:corona}; furthermore, if $\psi(t),\frac{t-s}{d}\in V$, then $t\in\corona$. Hence, $B(\psi)\cap B\left(\frac{X-s}{d}\right)\cap B(\phi)=\emptyset$, and thus $B(\psi)\cap B\left(\frac{X-s}{d}\right)$ is a neighborhood of $V_E$ disjoint from $B(\phi)$. It follows that $B(\phi)$ is closed, as claimed.

\medskip

\ref{prop:costr-VE:tutto} $\Longrightarrow$ \ref{prop:costr-VE:leq} and \ref{prop:costr-VE:min} are obvious.

Suppose now that either \ref{prop:costr-VE:leq} or \ref{prop:costr-VE:min} hold for some $\delta$, and let $\mathcal X$ be $\bigcup_{\delta'\leq\delta}\insiemeVE(\bullet,\delta')$ or $\bigcup_{\delta'<\delta}\insiemeVE(\bullet,\delta')$, accordingly. Suppose that the residue field of $V$ is infinite. Let $c\in K$ such that $\eta=v(c)<\delta$: we claim that $B(c^{-1}X)\cap\mathcal{X}$ is not closed in $\mathcal X$.

Indeed, let $E=\{s_n\}_{n\inN}$ be a pseudo-convergent sequence with breadth $\eta$ and having $0$ as a pseudo-limit. Then, $V_E\notin B(c^{-1}X)$.  Suppose there is a neighborhood of $V_E$ disjoint from $B(c^{-1}X)$: then, there are $\psi_1,\ldots,\psi_k$ such that $V_E\in B(\psi_1,\ldots,\psi_k)$ and such that $B(\psi_1,\ldots,\psi_k)\cap B(c^{-1}X)=\emptyset$.

Fix an extension $u$ of $v$ to $\overline{K}$. Let $\beta_1,\ldots,\beta_m$ be the critical points of $\psi_1,\ldots,\psi_k$ having valuation $\eta$ under $u$ (if there are any). Since the residue field of $V$ is infinite, there is a $t\in K$ such that $v(t)=\eta$ and such that $u(t-\beta_i)=\eta$ for all $i$. We claim that $v(\psi_i(t))\geq 0$ for all $i$.

Indeed, fix $i$, and let $\alpha_1,\ldots,\alpha_r$ be the critical points of $\psi=\psi_i$. By construction, we have
\begin{equation}\label{u(t-alphajleqeta)}
u(t-\alpha_j)=\begin{cases}
u(\alpha_j) & \text{if~}u(\alpha_j)<\eta,\\
v(t)=\eta & \text{if~}u(\alpha_j)\geq\eta.
\end{cases}
\end{equation}
In particular, a direct calculation gives $v(\psi(t))=\lambda\eta+\gamma$, where $\lambda$ is the weighted sum of the critical points of $\psi$ in the closed ball $B(0,e^{-\eta})$ and $\gamma\in\Gamma_v$. By Corollary \ref{cor:wEphi}, it follows that $v(\psi(t))=w_E(\psi)$; in particular, $v(\psi(t))\geq 0$ since $\psi\in V_E$. Therefore, $v(\psi_i(t))\geq 0$ for all $i$. Furthermore, we claim that
\begin{equation}\label{claim}
v(\psi_i(t'))=v(\psi_i(t))\geq 0,\;\textnormal{ for all } t' \textnormal{ such that }v(t-t')>\eta
\end{equation}
In fact, by (\ref{u(t-alphajleqeta)}) we have $\eta\geq u(t-\alpha_i)$, so $u(t'-\alpha_i)=u(t-\alpha_i)$ for all $i=1,\ldots,r$ and the claim follows.

Hence, if $\eta'>\eta$ and $F=\{t_n\}_{n\inN}$ is a pseudo-convergent sequence of breadth $\eta'$ and pseudo-limit $t$, then $V_F\in B(\psi_1,\ldots,\psi_k)$ by (\ref{claim}), since $v(t-t_n)>\eta$ for large $n$. In particular, we must have $v(t)=v(t_n)$ for every $n$, since $v(t)=\eta<\eta'$ and $v(t-t_n)\nearrow\eta'$, so $0$ is not a pseudo-limit of $F$ (thus, $v(t_n)$ is eventually constant). Hence, $V_F$ also belongs to $B(c^{-1}X)$; therefore, if we choose $\eta'\in(\eta,\delta)$, we have $V_F\in B(\psi_1,\ldots,\psi_k)\cap B(c^{-1}X)\cap\mathcal X$, against our choice of $\psi_1,\ldots,\psi_k$. Therefore, $B(c^{-1}X)\cap \mathcal X$ is not closed, and the constructible topology does not agree with the Zariski topology. By contradiction, \ref{prop:costr-VE:res} holds.
\end{proof}


To conclude this section, we study the function from $\mathcal W$ to $\mathcal V$ which maps each $W_E$ to $V_E$. We need the following lemma.
\begin{lemma}\label{lemma:finiteness}
Let $\phi\in K(X)$ and $\delta\in\R$. Let $S$ be the set of valuation domains $V_F$, with $F=\{t_n\}_{n\inN}$, such that $v(\phi(t_n))\nearrow \delta$. Then, $S$ is a finite set.
\end{lemma}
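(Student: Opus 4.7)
The plan is to attach to each $V_F\in S$ a combinatorial invariant, namely a sub-multiset of the finite multiset $\Omega_\phi$ of critical points of $\phi$, and to show that the assignment is injective; this will bound $|S|$ by $2^{|\Omega_\phi|}$.

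First I would observe that every $V_F\in S$ comes from a pseudo-convergent sequence of algebraic type with positive dominating degree and finite breadth. Indeed, by Proposition \ref{prop:qsn} there exist $\lambda_F=\degdom_F(\phi)\in\insZ$, $\gamma_F\in\Gamma_v$ such that $v(\phi(t_n))=\lambda_F\delta_n^F+\gamma_F$ for all large $n$, where $\{\delta_n^F\}$ is the gauge of $F$. Strict monotonicity together with the finite limit $\delta$ forces $\lambda_F\geq 1$ and $\delta_F:=\lim_n\delta_n^F<\infty$, and passing to the limit yields the crucial relation
\begin{equation*}
\lambda_F\delta_F+\gamma_F=\delta.
\end{equation*}

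Next, fix once and for all an extension $u$ of $v$ to $\overline{K}$, and enumerate $\Omega_\phi=\{\alpha_1,\ldots,\alpha_m\}$ inside $\overline{K}$. For $V_F\in S$, define the sub-multiset $T_F\subseteq\Omega_\phi$ as the set of $\alpha_i$ which are pseudo-limits of $F$ with respect to $u$ — equivalently, those with $u(\alpha_i-\beta)\geq\delta_F$ for one (hence every) pseudo-limit $\beta\in\overline{K}$ of $F$. By Proposition \ref{prop:qsn}\ref{prop:qsn:alg}, $\lambda_F$ is the weighted sum of $T_F$; since $\lambda_F>0$, we have $T_F\neq\emptyset$. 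The main claim is that $V_F\mapsto T_F$ is injective, which would finish the proof. For this, I will show that $T_F$ alone determines $\gamma_F$: given this, the relation $\lambda_{T_F}\delta_F+\gamma_F=\delta$ pins down $\delta_F$, so the set of pseudo-limits $\limiti_F^u$ (the closed ball of radius $e^{-\delta_F}$ around any element of $T_F$) is determined, and hence so is $V_F$ by Theorem \ref{teor:equiv-VE}.

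The dependence of $\gamma_F$ solely on $T_F$ is the main obstacle — and it is an ultrametric computation. By Proposition \ref{prop:qsn}\ref{prop:qsn:alg}, taking any non-critical $\beta\in\mathcal{L}_F^u$, we have
\begin{equation*}
\gamma_F=u(c)+\sum_{\alpha_i\notin T_F}\epsilon_i\,u(\alpha_i-\beta),
\end{equation*}
where $c$ is the leading coefficient of $\phi$ and $\epsilon_i\in\{\pm1\}$. Pick any $\alpha_{i_0}\in T_F$; then $u(\alpha_{i_0}-\beta)\geq\delta_F>u(\alpha_i-\beta)$ for every $\alpha_i\notin T_F$, so the strong triangle inequality gives $u(\alpha_i-\beta)=u(\alpha_i-\alpha_{i_0})$, yielding
\begin{equation*}
\gamma_F=u(c)+\sum_{\alpha_i\notin T_F}\epsilon_i\,u(\alpha_i-\alpha_{i_0}),
\end{equation*}
an expression visibly depending only on the multiset $T_F$ (independence of the choice of $\alpha_{i_0}\in T_F$ follows from the same argument applied to two elements of $T_F$). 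Since $\Omega_\phi$ is finite, so is the set of possible $T_F$, and $|S|\leq 2^{|\Omega_\phi|}<\infty$.
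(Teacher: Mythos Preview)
Your proof is correct and rests on the same idea as the paper's: the pair $(\lambda_F,\gamma_F)$ depends only on which critical points of $\phi$ lie in $\mathcal{L}_F^u$, so the relation $\lambda_F\delta_F+\gamma_F=\delta$ pins down $\delta_F$ and hence $V_F$. The paper packages this slightly differently---it first fixes a zero $\beta\in\mathcal{L}_F^u$ and then bounds the set of possible breadths $\delta_F$ by partitioning $\R$ at the values $u(\beta-\beta')$---but your direct injection $V_F\mapsto T_F$ into the subsets of $\Omega_\phi$ is equivalent and arguably cleaner.
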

\begin{proof}
Let $V_F\in S$, $F=\{t_n\}_{n\in\N}$ with breadth $\delta_F$, and fix an extension $u$ of $v$ to $\overline{K}$. By Proposition \ref{prop:qsn} and Corollary \ref{cor:wEphi} there are $\lambda\inZ$, $\gamma\in\Gamma_v$ depending on $F$ and $\phi(X)$ such that
\begin{equation}\label{delta=wFphi}
\delta=w_F(\phi)=\lambda \delta_F+\gamma.
\end{equation}
Since $v(\phi(t_n))$ is eventually strictly increasing, $F$ is of algebraic type and by Proposition \ref{prop:qsn} its dominating degree $\lambda$ is positive, i.e., some zero of $\phi$ is a pseudo-limit of $F$ with respect to $u$. Hence, $S$ is the union of $S_{\beta}=\{V_F\in S\mid \beta\in\limiti_F^u\}$, as $\beta$ ranges among the zeroes of $\phi$. Since $\phi$ has only finitely many zeroes, it is enough to show that each $S_{\beta}$ is finite.

Let $A_\beta$ be the set of breadths of the pseudo-convergent sequences in $S_{\beta}$; then, the cardinality of $A_\beta$ is equal to the cardinality of $S_\beta$, by Theorem \ref{teor:equiv-VE}. Let $\theta_1<\cdots<\theta_a$ be the elements of $\Gamma_v$ such that there is a critical point $\beta'$ of $\phi$ with $v(\beta-\beta')=\theta_i$; let $\theta_0=-\infty$ and $\theta_{a+1}=+\infty$. We claim that $A_{\beta}\cap(\theta_i,\theta_{i+1})$ has at most one element, for every $i\in\{0,\ldots,a\}$.

Let $V_F\in S_{\beta}$ be such that $\delta_F\in(\theta_i,\theta_{i+1})$, and let $F=\{t_n\}_{n\in\N}$. Note that for such pseudo-convergent sequences $F$, the values of $\lambda$ and $\gamma$ in (\ref{delta=wFphi}) do not depend on $F$ (explicitly, $\lambda$ is the weighted sum of critical points $\beta'$ of $\phi$ such that $v(\beta-\beta')\geq\delta_F$, which is equivalent to  $v(\beta-\beta')> \theta_{i+1}$, and $\gamma$ is defined as in Proposition \ref{prop:qsn}). In particular, by (\ref{delta=wFphi}),  $\delta_F$ is uniquely determined in $(\theta_i,\theta_{i+1})$ (recall that if $V_F\in S$ then the dominating degree is nonzero), and since we are dealing with pseudo-convergent sequences $F$ having $\beta$ as pseudo-limit, by Theorem \ref{teor:equiv-VE} $|A_\beta\cap(\theta_i,\theta_{i+1})|\leq 1$. Therefore,
\begin{equation*}
A_{\beta}\subseteq\{\theta_1\ldots,\theta_a\}\cup\bigcup_{i=0}^a(A_{\beta}\cap(\theta_i,\theta_{i+1}))
\end{equation*}
is finite. Hence, $S_{\beta}$ is finite and the claim is proved.
\end{proof}

If $V$ is a DVR, then we have already remarked at the beginning of Section \ref{sect:spaces} that $\mathcal{W}$ is a subset of $\mathcal{V}$; in particular, it is  a topological embedding. If $V$ is non-discrete, we still have an inclusion, which however is not an embedding.
\begin{prop}\label{prop:WV-continuo}
Let $V$ be a rank one non-discrete valuation domain. Let $\Psi$ be the map
\begin{equation*}
\begin{aligned}
\Psi\colon\mathcal{W} & \longrightarrow\insiemeVE\\
W_E & \longmapsto V_E
\end{aligned}
\end{equation*}
Then, $\Psi$ is continuous and injective, but it is not a topological embedding.
\end{prop}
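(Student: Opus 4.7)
Injectivity (and well-definedness) of $\Psi$ is immediate from Theorem \ref{teor:equiv-VE}: for pseudo-convergent sequences $E,F$ of transcendental type or of algebraic type with nonzero breadth ideal, $W_E=W_F$ is equivalent to $V_E=V_F$. For continuity, it is enough to show that $\Psi^{-1}(B^{\insiemeVE}(\phi))$ is Zariski-open in $\mathcal{W}$ for every $\phi\in K(X)$. The key step is to rewrite the condition $\phi\in V_E$ in terms of data on $W_E$: by Proposition \ref{prop:qsn} and Corollary \ref{cor:wEphi}, for large $n$ we have $v(\phi(s_n))=\lambda\delta_n+\gamma$ with $\lambda=\degdom_E\phi$, $w_E(\phi)=\lambda\delta+\gamma$, and $\delta_n\nearrow\delta$. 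A direct sign analysis (on whether $\lambda$ is positive, zero, or negative) yields
\[
\phi\in V_E \iff w_E(\phi)>0,\ \text{or}\ w_E(\phi)=0 \text{ and }\degdom_E\phi\leq0,
\]
and therefore
\[
\Psi^{-1}(B^{\insiemeVE}(\phi))=B^{\mathcal{W}}(\phi)\setminus N,\qquad N=\{W_E\in\mathcal{W}\mid w_E(\phi)=0,\ \degdom_E\phi>0\}.
\]

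The crucial point is that $N$ is finite: if $W_E\in N$, then $\lambda>0$ and $\lambda\delta+\gamma=0$, so $v(\phi(s_n))$ strictly increases to $0$, i.e.\ $v(\phi(s_n))\nearrow 0$. Lemma \ref{lemma:finiteness} (applied with $\delta=0$) then gives only finitely many such $V_E$, and injectivity of $\Psi$ forces $N$ itself to be finite. Since $\mathcal{W}$ is zero-dimensional by Proposition \ref{prop:spazioW}, hence Hausdorff, $N$ is closed, and so $\Psi^{-1}(B^{\insiemeVE}(\phi))=B^{\mathcal{W}}(\phi)\setminus N$ is open. This completes the proof of continuity.

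To show that $\Psi$ is not a topological embedding, I exhibit a sequence whose image converges in $\Psi(\mathcal{W})$ while its preimage fails to converge in $\mathcal{W}$. Since $V$ is non-discrete, $\Gamma_v$ is dense in $\insR$; fix $\delta\in\Gamma_v$, a strictly increasing sequence $\delta_n\nearrow\delta$ in $\Gamma_v$, an element $\beta\in K$, and $\pi_n\in K$ with $v(\pi_n)=\delta_n$. Setting $s_n=\beta+\pi_n$ produces a pseudo-convergent sequence $E=\{s_n\}_{n\in\N}$ of algebraic type with pseudo-limit $\beta$ and breadth $\delta$; for each $n$, $E_n=\{s_n+\pi_k\}_{k\in\N}$ is a pseudo-convergent sequence with pseudo-limit $s_n$ and breadth $\delta$. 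By Proposition \ref{prop:limitcostr}(a), $V_{E_n}\to V_E$ in $\Zar(K(X)|V)^\cons$, hence in $\Psi(\mathcal{W})$ with the subspace Zariski topology. On the other hand, pick $c\in K$ with $v(c)=-\delta$ and set $\phi=c(X-\beta)$: since $\beta\in\mathcal{L}_E$, Remark \ref{Rem:osservazioni su wE}\ref{Rem:osservazioni su wE:X} gives $w_E(X-\beta)=\delta$, so $w_E(\phi)=0$ and $W_E\in B^{\mathcal{W}}(\phi)$. However $\beta\notin\mathcal{L}_{E_n}$ (as $v(s_n-\beta)=\delta_n<\delta$), so $w_{E_n}(X-\beta)=\delta_n$ and $w_{E_n}(\phi)=\delta_n-\delta<0$, whence $W_{E_n}\notin B^{\mathcal{W}}(\phi)$ for every $n$. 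Thus $B^{\mathcal{W}}(\phi)$ is a Zariski-open neighborhood of $W_E$ disjoint from the sequence $\{W_{E_n}\}$, so $W_{E_n}\not\to W_E$ in $\mathcal{W}$, and $\Psi^{-1}$ cannot be continuous. The main technical obstacle is the sign analysis that yields the clean characterization of $\phi\in V_E$ and the application of Lemma \ref{lemma:finiteness} to control the exceptional set $N$; once these are in place, the rest amounts to bookkeeping.
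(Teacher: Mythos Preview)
Your proof is correct and tracks the paper's argument closely for injectivity and continuity: both identify $\Psi^{-1}(B^{\insiemeVE}(\phi))$ as $B^{\mathcal{W}}(\phi)$ minus a finite exceptional set (the paper calls it $C$, you call it $N$), invoke Lemma~\ref{lemma:finiteness} to get finiteness, and use the separation properties of $\mathcal{W}$ to conclude that this set is closed. Your sign analysis via $\degdom_E\phi$ makes the description of $N$ slightly more explicit than the paper's $\{W_E\mid\phi\in W_E^\ast\setminus V_E\}$, but the content is identical.

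For the failure of $\Psi$ to be an embedding, you and the paper diverge a bit. The paper chooses $E_n$ with breadth $\zeta>\delta$ and argues indirectly: if $\Psi^{-1}$ were continuous then $W_{E_n}\to W_E$ in $\mathcal{W}$, hence in $\Zar(K(X)|V)^{\cons}$ by Proposition~\ref{prop:spazioW}; but Proposition~\ref{prop:limitcostr}(b) also gives $W_{E_n}\to V_E$ there, and Hausdorffness forces $V_E=W_E$, contradicting Theorem~\ref{teor:rank-VE} since $\delta\in\Gamma_v$. You instead take $E_n$ with breadth equal to $\delta$ and exhibit the explicit separating neighborhood $B^{\mathcal{W}}(c(X-\beta))$, computing $w_{E_n}(X-\beta)=\delta_n<\delta$ directly from Remark~\ref{Rem:osservazioni su wE}\ref{Rem:osservazioni su wE:X}. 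Your route is more concrete and self-contained (it does not need part~(b) of Proposition~\ref{prop:limitcostr} or the Hausdorff uniqueness-of-limits step), while the paper's route illustrates how the comparison between $V_E$ and $W_E$ already encodes the obstruction. Both are valid.
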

\begin{proof}
By Theorem \ref{teor:equiv-VE}, $\Psi$ is injective. To show that $\Psi$ is continuous, it is enough to show that every $\Psi^{-1}(B^{\insiemeVE}(\phi))$ is open.

Since $V_E\subseteq W_E$, we have $\Psi^{-1}(B^{\insiemeVE}(\phi))=\{W_E\in\mathcal{W}\mid V_E\ni\phi\}\subseteq B^{\mathcal{W}}(\phi)$, and the inclusion can be strict; more precisely,
\begin{equation*}
C=B^{\mathcal{W}}(\phi)\setminus \Psi^{-1}(B^{\insiemeVE}(\phi))=\{W_E\mid \phi\in W_E\setminus V_E\}=\{W_E\mid \phi\in W_E^\ast\setminus V_E\}.
\end{equation*}
If $E=\{s_n\}_{n\inN}$ is such that $\phi\in W_E^\ast$, then $w_E(\phi)=0$; furthermore, if $\phi\notin V_E$ then $v(\phi(s_n))$ is eventually negative. Hence, for every $W_E\in C$ we must have $v(\phi(s_n))\nearrow 0$, and by Lemma \ref{lemma:finiteness} the set $C$ is finite (and possibly empty); since $\mathcal{W}$ is $T_1$ (Proposition \ref{prop:spazioW}), $C$ is closed. Hence,
\begin{equation*}
\Psi^{-1}(B^{\insiemeVE}(\phi))=B^{\mathcal{W}}(\phi)\cap (\mathcal{W}\setminus C)
\end{equation*}
is open, and so $\Psi$ is continuous.

Let $\insiemeVE_0$ be the image of $\Psi$: to show that $\Psi$ is not a topological embedding, it is enough to show that $\Phi=\Psi^{-1}:\insiemeVE_0\longrightarrow\mathcal{W}$ is not continuous. Take a pseudo-convergent sequence $E$ of algebraic type with breadth $\delta\in\Gamma_v$, and let $\zeta>\delta$. By Proposition \ref{prop:limitcostr}, if, for each $n\in\N$, $E_n$ is a pseudo-convergent sequence with limit $s_n$ and breadth $\zeta$, then $V_E$ is the limit of $V_{E_n}$ in the Zariski topology; note that both $V_E$ and the $V_{E_n}$ belong to $\insiemeVE_0$ since they have finite breadth.

Hence, if $\Phi$ were continuous then $\Phi(V_{E_n})=W_{E_n}$ would have limit $\Phi(V_E)=W_E$ in $\mathcal{W}$; since the Zariski and the constructible topologies agree on $\mathcal{W}$ (by Proposition \ref{prop:spazioW}), it would follow that $W_{E_n}$ has limit $W_E$ in $\Zar(K(X)|V)^\cons$. However, this contradicts Proposition \ref{prop:limitcostr}, since $\Zar(K(X)|V)^\cons$ is Hausdorff and $V_E\neq W_E$ by Theorem \ref{teor:rank-VE} (and the choice of $\delta$). Hence, $\Phi$ is not continuous and $\Psi$ is not a topological embedding.
\end{proof}

\subsection{Separation properties of $\insiemeVE$}\label{sect:separation}

A topological space is \emph{regular} if every point is closed and if, whenever $C$ is a closed set and $x\notin C$ then $x$ and $C$ can be separated by open sets. The space $\Zar(K(X)|V)$ is not regular under the Zariski topology, since it is not even $T_1$; on the other hand, under the constructible topology, $\mathcal{V}$ is regular, since it is a subspace of the regular space $\Zar(K(X)|V)^\cons$. In particular, by Proposition \ref{prop:costr-VE}, if the residue field of $V$ is finite then $\mathcal{V}$ is regular even if endowed with the Zariski topology (since in this case the two topologies coincide on $\mathcal{V}$). In this section, we show that the regularity of $\mathcal{V}$ under the Zariski topology holds without any additional hypothesis.


We say that two subsets $C_1,C_2$ of a topological space $X$ can be \emph{separated by open-closed sets} (\emph{open sets}, respectively) if there are disjoint open-closed (open, respectively) subsets $\Omega_1,\Omega_2$ of $X$ such that $C_i\subseteq\Omega_i$. If $C_1=\{c_1\}$ is a singleton, we also say that $c_1$ and $C_2$ can be separated by open-closed sets (open sets, respectively). 

We need a preliminary lemma.
\begin{lemma}\label{lemma:corone-disj}
Let $\gamma\in\insQ\Gamma_v$ and $s\in K$. Then, the set
\begin{equation*}
\Omega(s,\gamma)=\{V_E\in\insiemeVE\mid w_E(X-s)\leq\gamma\}
\end{equation*}
is both open and closed in $\insiemeVE$.
\end{lemma}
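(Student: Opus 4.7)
The plan is to realize $\Omega(s, \gamma)$ as a basic open set $B^{\insiemeVE}(\phi)$ of $\insiemeVE$, and to write its complement as a union of basic open sets; this gives openness and closedness respectively. Since $\gamma \in \insQ\Gamma_v$, pick $k \in \N^+$ and $c \in K$ with $v(c) = k\gamma$, and set $\phi = c/(X-s)^k \in K(X)$.

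For openness, I would verify $\Omega(s, \gamma) = B^{\insiemeVE}(\phi)$. Since $s$ can coincide with at most one $s_n$, a direct computation gives $v(\phi(s_n)) = k(\gamma - v(s_n - s))$ for all but finitely many $n$, so $\phi \in V_E$ iff $v(s_n - s) \leq \gamma$ for all sufficiently large $n$. To connect this with $w_E(X-s)$, I would split cases using Lemma \ref{lemma:kaplansky} and Remark \ref{Rem:osservazioni su wE}\ref{Rem:osservazioni su wE:X}: if $s \in \limiti_E$ then $v(s_n - s) = \delta_n \nearrow \delta_E = w_E(X-s)$, so the eventual bound holds iff $\delta_E \leq \gamma$; if $s \notin \limiti_E$, fix an extension $u$ of $v$ to $\overline{K}$ with a pseudo-limit $\beta$, and then $v(s_n - s) = u(\beta - s) = w_E(X-s)$ eventually. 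In either case, $\phi \in V_E$ iff $w_E(X-s) \leq \gamma$.

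For closedness, I would exhibit the complement as a union indexed by $\gamma' \in \insQ\Gamma_v$ with $\gamma' > \gamma$: for each such $\gamma'$, pick $k' \in \N^+$ and $c' \in K$ with $v(c') = k'\gamma'$, and set $\psi_{\gamma'} = (X-s)^{k'}/c'$. The claim to prove is
\begin{equation*}
\insiemeVE \setminus \Omega(s, \gamma) = \bigcup_{\substack{\gamma' \in \insQ\Gamma_v\\ \gamma' > \gamma}} B^{\insiemeVE}(\psi_{\gamma'}).
\end{equation*}
The forward inclusion is immediate from the analogous computation: $\psi_{\gamma'} \in V_E$ gives $v(s_n-s) \geq \gamma'$ eventually, whence $w_E(X-s) \geq \gamma' > \gamma$. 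The reverse inclusion uses density of $\insQ\Gamma_v$ in $\R$ (which holds since $V$ has rank one): given $V_E$ with $w_E(X-s) > \gamma$, if $s \in \limiti_E$ pick any $\gamma' \in (\gamma, \delta_E) \cap \insQ\Gamma_v$ (handling $\delta_E = \infty$ trivially), while if $s \notin \limiti_E$ simply take $\gamma' = u(\beta - s) \in \insQ\Gamma_v$, which strictly exceeds $\gamma$ by hypothesis.

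The main subtlety is the asymmetry exposed in the closedness argument: when $s$ is a pseudo-limit the value $w_E(X-s) = \delta_E$ is only approached strictly from below by the $\delta_n$, so no single rational function can witness the strict inequality $w_E(X-s) > \gamma$ in that case; looping over all $\gamma' > \gamma$ and invoking density of $\insQ\Gamma_v$ in $\R$ is exactly what bridges this gap between strict and weak inequality.
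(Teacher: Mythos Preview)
Your approach matches the paper's: the same $c/(X-s)^k$ witnesses openness, and the complement is written as a union of basic opens of the form $B((X-s)^{k'}/c')$, whereas the paper uses the slightly simpler $B((X-s)/d)$ with $d\in K$, $v(d)>\gamma$, picking $\gamma'$ directly as a gauge value $\delta_n>\gamma$ rather than invoking density of $\insQ\Gamma_v$. One small oversight to patch: in your branch ``$s\notin\limiti_E$'' you fix a pseudo-limit $\beta\in\overline{K}$, but when $E$ is of transcendental type no such $\beta$ exists; the fix is immediate (by definition of transcendental type, $v(s_n-s)$ is then eventually constant, equal to $w_E(X-s)\in\Gamma_v$), and the paper sidesteps the case split entirely by observing uniformly that $v(s_n-s)$ is either eventually increasing or eventually constant with limit $w_E(X-s)$.
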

\begin{proof}
If $V$ is discrete, then by Theorem \ref{teor:Vdisc} $\Omega(s,\gamma)$ is homeomorphic to $\{x\in \widehat K\mid v(x-s)\leq \gamma\}$ and thus it is both open and closed since $\widehat K$ is an ultrametric space.

Suppose $V$ is not discrete, and let $\Omega=\Omega(s,\gamma)$. Let $k>0$ be an integer such that $k\gamma\in\Gamma_v$, and let $c\in K$ be such that $v(c)=k\gamma$. We claim that
\begin{equation*}
\Omega=B\left(\frac{c}{(X-s)^k}\right)
\end{equation*}
and that
\begin{equation*}
\insiemeVE\setminus\Omega=\bigcup_{\substack{d\in K\\ v(d)>\gamma}}B\left(\frac{X-s}{d}\right).
\end{equation*}
Clearly, both right hand sides are open in $\insiemeVE$.

Let $E=\{s_n\}_{n\inN}$ be a pseudo-convergent sequence. Then $v(s_n-s)$ is either eventually increasing or eventually constant, and its limit is $w_E(X-s)$ (see Remark \ref{Rem:osservazioni su wE}\ref{Rem:osservazioni su wE:X}); hence, $V_E\in\Omega$ if and only if $v(s_n-s)\leq\gamma$ for large $n$, while $V_E\notin\Omega$ if and only if $v(s_n-s)>\gamma$ for large $n$.

If $V_E\in\Omega$ then
\begin{equation*}
v\left(\frac{c}{(s_n-s)^k}\right)=v(c)-kv(s_n-s)\geq k\gamma-k\gamma=0
\end{equation*}
and so $V_E\in B\left(\frac{c}{(X-s)^k}\right)$. In the same way, if $V_E\in B\left(\frac{c}{(X-s)^k}\right)$ then $v(s_n-s)\leq\gamma$ and so $V_E\in\Omega$.

Similarly, if $V_E\notin\Omega$ then $v(s_n-s)\geq\gamma'>\gamma$ for some $\gamma'\in\Gamma_v$; if $v(d)=\gamma'$ then $V_E\in B\left(\frac{X-s}{d}\right)$ and so it is in the union. Conversely, if $V_E$ is in the union then $V_E\in B\left(\frac{X-s}{d}\right)$ for some $d$, and $v(s_n-s)\geq v(d)>\gamma$ for large $n$, so that $V_E\notin\Omega$. The claim is proved.
\end{proof}

\begin{teor}\label{teor:regolare}
$\insiemeVE$ is a regular topological space.
\end{teor}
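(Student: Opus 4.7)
The plan is to show that $\insiemeVE$ is $T_1$ and that every point admits a neighborhood base of clopen sets built out of the sets $\Omega(s,\gamma)$ from Lemma \ref{lemma:corone-disj}; this "zero-dimensionality" together with $T_1$ implies regularity. I would first verify that $\insiemeVE$ is $T_1$: given distinct $V_E, V_F \in \insiemeVE$, neither contains the other. Indeed, each $V_E \in \insiemeVE$ has rank at most $2$, and the only proper nontrivial overring of a rank-$2$ element $V_E \in \insiemeVE$ is $W_E$; by Theorems \ref{teor:rank-VE} and \ref{teor:equiv-VE}, $W_E$ is never of the form $V_{E'}$ for any pseudo-convergent sequence $E'$ (an equivalent $E'$ would have the same breadth as $E$, forcing $V_{E'}$ to also have rank $2$). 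Hence there exist $\phi \in V_E \setminus V_F$ and $\psi \in V_F \setminus V_E$, so both singletons are closed.

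For the separation axiom, given $V_E$ and a closed set $C$ with $V_E \notin C$, pick a basic open $B(\phi_1,\dots,\phi_k) \subseteq \insiemeVE \setminus C$ containing $V_E$. Since finite intersections of clopens are clopen, it suffices to construct, for each $\phi=\phi_i$, a clopen $\Omega_i$ with $V_E \in \Omega_i \subseteq B(\phi_i)$; then $\Omega:=\bigcap_i \Omega_i$ is a clopen neighborhood of $V_E$ disjoint from $C$, and the pair $(\Omega, \insiemeVE \setminus \Omega)$ separates $V_E$ and $C$. The problem thus reduces to the key local statement: \emph{for every $V_E \in B(\phi)$ there is a clopen neighborhood of $V_E$ inside $B(\phi)$}, using only Boolean combinations of the clopens $\Omega(s,\gamma)$.

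To construct such a clopen, I would pick a pseudo-limit $s \in K$ of $E$ if one exists, and otherwise take $s=s_n$ for $n$ sufficiently large (which approximates a pseudo-limit $\beta \in \overline{K}$ arbitrarily well). By Lemma \ref{lemma:corona}, there is $\tau \in \Gamma_v$ with $\tau<\delta_E$ such that $\phi(t) \in V$ for all $t \in \corona(s,\tau,\delta_E)$. When $\delta_E \in \insQ\Gamma_v$, set
\[
\Omega = \Omega(s,\delta_E) \setminus \Omega(s,\gamma_1), \qquad \gamma_1 \in (\tau,\delta_E) \cap \insQ\Gamma_v.
\]
By Lemma \ref{lemma:corone-disj} this is clopen, and $V_E \in \Omega$ because $w_E(X-s) = \delta_E$ (since $s$ is a pseudo-limit of $E$). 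To verify $\Omega \subseteq B(\phi)$, one analyzes each $V_F \in \Omega$: writing $F=\{t_n\}_{n\in \N}$, the condition $\gamma_1<w_F(X-s)\leq \delta_E$ forces the $t_n$ to eventually lie in $\corona(s,\tau,\delta_E)$ in the generic cases, so that $\phi(t_n) \in V$ and $\phi \in V_F$. When $\delta_E \notin \insQ\Gamma_v$, by Theorem \ref{teor:rank-VE} we have $V_E = W_E$, and the clopen neighborhood descends from the zero-dimensionality of $\mathcal{W}$ (Proposition \ref{prop:spazioW}), intersected with suitable $\Omega(s,\gamma)$'s to confine us to elements of $\insiemeVE$ with the relevant breadth.

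The main obstacle will be the boundary subcase $V_F \in \Omega$ with $w_F(X-s)=\delta_E$ but $\delta_F > \delta_E$: here $s$ is not a pseudo-limit of $F$ and $v(t_n-s)$ stabilises at $\delta_E$, placing the $t_n$ on the sphere $v(t-s)=\delta_E$ rather than strictly inside the annulus, so Lemma \ref{lemma:corona} does not directly apply. To deal with this, I would refine $\Omega$ by intersecting with further clopens $\Omega(s',\gamma') \setminus \Omega(s',\gamma'')$ for suitable $s' \in K$ chosen to separate the direction of the problematic pseudo-limit $\beta_F$ (which satisfies $u(\beta_F-s)=\delta_E$) from the directions in which $\phi$ may fail to be $V$-valued; equivalently, one appeals to Proposition \ref{prop:linear wE} applied to $F$ to compute $v(\phi(t_n))$ explicitly in terms of the critical points of $\phi$ and to show directly that the answer is $\geq 0$. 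This case analysis is the bulk of the work.
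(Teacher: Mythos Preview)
Your outline has a genuine gap at exactly the point you flag as ``the bulk of the work''. You are trying to prove zero-dimensionality of $\insiemeVE$ (a clopen neighborhood base), which is strictly stronger than regularity; the paper does \emph{not} establish zero-dimensionality, and it is not clear your proposed method can. Concretely, in your boundary subcase ($w_F(X-s)=\delta_E$, $\delta_F>\delta_E$) you suggest refining by finitely many clopens $\Omega(s',\gamma')\setminus\Omega(s',\gamma'')$, but you give no argument that finitely many such sets can exclude \emph{all} problematic $V_F$: the bad $V_F$ arise from pseudo-limits lying near the critical points of $\phi$ on the sphere $u(t-s)=\delta_E$, with arbitrary breadth $\delta_F>\delta_E$, and when those critical points are not in $K$ the choice of centers $s'$ is delicate. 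Nothing in your outline handles this. Your side remark for $\delta_E\notin\insQ\Gamma_v$ also does not work as stated: the zero-dimensionality of $\mathcal{W}$ does not transfer to $\insiemeVE$ because $\Psi\colon\mathcal{W}\to\insiemeVE$ is not a topological embedding (Proposition~\ref{prop:WV-continuo}).

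The paper's proof takes a different route that avoids the zero-dimensionality question entirely. After the same $T_1$ step, it fixes $V_E\in B(\phi_1,\ldots,\phi_k)\subseteq\insiemeVE\setminus C$ and separates $V_E$ from $C$ by case analysis on $E$. In the transcendental and ``no pseudo-limit in $K$'' cases a single clopen $\Omega(s,\gamma)$ suffices. In the remaining case ($E$ algebraic with pseudo-limit $\alpha\in K$), the paper partitions $C$, not $B(\phi_1,\ldots,\phi_k)$: the pieces $C_1,C_2$ (where $w_F(X-\alpha)\neq\delta$) are separated by clopens $\Omega(\alpha,\theta_i)$, and the residual piece $C_3$ (your boundary case, forcing $\delta\in\Gamma_v$) is split as $C_3=\bigcup_i(D_i\cup H_i)$ with $D_i=\{V_F:w_F(\phi_i)<0\}$ and $H_i=\{V_F:w_F(\phi_i)=0,\ \phi_i\notin V_F\}$. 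Each $D_i$ is separated from $V_E$ by the \emph{open} (not clopen) set $\bigcup_{v(d)<0}B(d/\phi_i)$ versus $B(\phi_1,\ldots,\phi_k)$; each $H_i$ is \emph{finite} by Lemma~\ref{lemma:finiteness} and hence separated pointwise by clopens. These two ingredients---the open sets $\Omega_i$ and the finiteness lemma---are the substantive ideas missing from your outline.
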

\begin{proof}
If $V$ is a DVR, the statement follows from the fact that $\insiemeVE=\insiemeVE(\bullet, \infty)$ is an ultrametric space by \cite[Theorem 3.4]{PerTransc}. Henceforth, we assume that $V$ is not discrete.

We first note that each point of $\insiemeVE$ is closed: indeed, the closure of a point $Z$ in $\Zar(K(X)|V)$ is equal to the set of valuation domains contained in $Z$. However, two different domains $V_E$ and $V_F$ are never comparable: if they were, then $W_E=W_F$, and thus $V_E=V_F$ by Theorem \ref{teor:equiv-VE}.

Let $E=\{s_n\}_{n\in\N}\subset K$ be a pseudo-convergent sequence of breadth $\delta$, let  $\{\delta_n\}_{n\inN}$ be the gauge of $E$ and let $C\subset\insiemeVE$ be a closed set which does not contain $V_E$. Then there are rational functions $\phi_1,\ldots,\phi_k\in K(X)$ such that $V_E\in B(\phi_1,\ldots,\phi_k)$ while $B(\phi_1,\ldots,\phi_k)\cap C=\emptyset$. We let $\Lambda=\{\beta_1,\ldots,\beta_m\}\subseteq\overline{K}$ be the set of critical points of $\phi_1,\ldots,\phi_k$. Let also $u$ be an extension of $v$ to $\overline{K}$.

We want to separate $V_E$ and $C$; we need to distinguish several cases.


\caso{1} $E$ is of transcendental type.

By \cite[Theorem 31.18, p. 328]{Warner}, there is an $n$ such that no $\beta\in\Lambda$  satisfies $u(\beta-s_n)\geq\delta_n$. Hence, there is a $\gamma<\delta_n$, $\gamma\in\insQ\Gamma_v$ such that each $\beta\in\Lambda$ satisfies $u(\beta-s_n)<\gamma$. Moreover, up to considering a bigger $n\in\N$, we may also suppose that $\phi_i(s_n)\in V$ for all $i=1,\ldots,k$. Let $s=s_n$. By Theorem \ref{teor:linear}, we have $v(\phi_i(t))=v(\phi_i(s))\geq0$ for all $t$ such that $v(t-s)\geq\gamma$ and for all $i=1,\ldots,k$. 

We claim that $\Omega(s,\gamma)$ and its complement separate $C$ and $V_E$; by Lemma \ref{lemma:corone-disj}, this will imply that $C$ and $V_E$ are separated by open-closed sets.

Indeed, clearly $w_E(X-s)=\delta_{n}>\gamma$ and so $V_E\notin\Omega(s,\gamma)$. On the other hand, if $V_F\in C$ and $F=\{t_n\}_{n\inN}$, then there is an $i$ such that $v(\phi_i(t_n))$ is eventually negative. By the previous paragraph $v(t_n-s)<\gamma$ for all sufficiently large $n$; hence, $w_F(X-s)=\lim_nv(t_n-s)\leq\gamma$ and $V_F\in\Omega(s,\gamma)$. Thus, $C\subseteq\Omega(s,\gamma)$, as claimed.

\caso{2} $E$ is of algebraic type without pseudo-limits in $K$.

Let $\alpha\in\overline{K}\setminus K$ be a pseudo-limit of $E$ with respect to $u$. By Lemma \ref{lemma:kaplansky}, there is no element $t$ of $K$ such that $u(\alpha-t)\geq\delta$. By Proposition \ref{lemma:corona}, there is an annulus $\corona=\corona_u(\alpha,\tau,\delta)$ such that $\phi_i(t)\in V$ for all $i=1,\ldots,k$ and all $t\in\corona$; let $s\in\corona$ and let $\delta'=u(\alpha-s)\in\insQ\Gamma_v$. Note that $\tau<\delta'<\delta$. We claim that $\Omega(s,\tau)$ and its complement separate $C$ and $V_E$.

Indeed, we have
\begin{equation*}
w_E(X-s)=\lim_{n\to\infty}v(s_n-s)=\lim_{n\to\infty}u(s_n-\alpha+\alpha-s)=u(\alpha-s)=\delta'
\end{equation*}
since $\delta_n>\delta'$ for large $n$; hence, $V_E\not\in\Omega(s,\tau)$. On the other hand, if $F=\{t_n\}_{n\in\N}$ is a pseudo-convergent sequence such that $V_F\in C\setminus\Omega(s,\tau)$, then $\tau<v(t_n-s)$ for all $n\in\N$ sufficiently large. Therefore, for each such $n$ we have $\delta>u(t_n-\alpha)=u(t_n-s+s-\alpha)>\tau$. By our assumption this would imply that $\phi_i(t_n)\in V$ for $i=1,\ldots,k$, for all $n\geq N$, which is a contradiction since $C\cap B(\phi_1,\ldots,\phi_k)=\emptyset$. Hence, $C\subseteq \Omega(s,\tau)$, and we have proved that $C$ and $V_E$ can be separated by an open-closed set.

\caso{3} $E$ is of algebraic type and there exists a pseudo-limit $\alpha$ of $E$ in $K$.

We partition $C$ into the following three sets:
\begin{align*}
C_1= & \{V_F\in C\mid w_F(X-\alpha)<\delta\},\\
C_2= & \{V_F\in C\mid w_F(X-\alpha)>\delta\},\\
C_3= & \{V_F\in C\mid w_F(X-\alpha)=\delta\}.
\end{align*}

By Theorem \ref{teor:linear} (and Remark \ref{oss:linear}), we can find $\zeta_1,\zeta_2\in\insQ\Gamma_v$ such that $\zeta_1<\delta\leq\zeta_2$ and such that $v(\phi_i(t))=\lambda_iv(t-\alpha)+\gamma_i$ for every $t\in\corona(\alpha,\zeta_1,\zeta_2)$, for some $\lambda_i\in\Z$ and $\gamma_i\in\Gamma_v$. Since $V_E\in B(\phi_1,\ldots,\phi_k)$, by Proposition \ref{lemma:corona} we can find $\theta_1,\theta_2\in\insQ\Gamma_v$, with $\delta\in(\theta_1,\theta_2]\subseteq(\zeta_1,\zeta_2]$, such that $\phi_i(t)\in V$ for all $t\in\corona(\alpha,\theta_1,\theta_2)$  and all $i=1,\ldots,k$.

Consider $\Omega(\alpha,\theta_1)$. We have $w_E(X-\alpha)=\delta>\theta_1$, and so $V_E\notin\Omega(\alpha,\theta_1)$; on the other hand, if $V_F\in C_1$, with $F=\{t_n\}_{n\inN}$, then $v(t_n-\alpha)<\theta_1$ for all large $n$ (because $C_1\subseteq C$ has empty intersection with $B(\phi_1,\ldots,\phi_k)$) and thus also $w_F(X-\alpha)\leq\theta_1$; hence, $C_1\subseteq\Omega(\alpha,\theta_1)$. Thus, $\Omega(\alpha,\theta_1)$ and its complement are open-closed subsets separating $C_1$ and $V_E$.

Similarly, $w_E(X-\alpha)=\delta\leq\theta_2$ and thus $V_E\in\Omega(\alpha,\theta_2)$; if $V_F\in C_2$, $F=\{t_n\}_{n\inN}$, then $v(t_n-\alpha)>\theta_2$ for all large $n$ (because $C_2\subseteq C$ has empty intersection with $B(\phi_1,\ldots,\phi_k)$) and, since $v(t_n-\alpha)$ is either eventually strictly increasing or eventually constant, we have $w_F(X-\alpha)>\theta_2$, i.e., $C_2\cap\Omega(\alpha,\theta_2)=\emptyset$. Hence, $\Omega(\alpha,\theta_2)$ and its complement separate $V_E$ and $C_2$. In particular, if $C_3=\emptyset$ then $V_E$ and $C$ can be separated by open-closed sets.

Suppose $C_3\neq\emptyset$ and let $V_F\in C_3$, $F=\{t_n\}_{n\in\N}$: then $\delta\in\insQ\Gamma_v$, for otherwise $v(t_n-\alpha)$ should increase to $\delta$, and so $t_n$ would enter in any annulus $\corona(\alpha,\tau,\delta)$ and by Proposition \ref{prop:qsn} $\phi_i\in V_F$ for $i=1,\ldots,k$, against the fact that $C\cap B(\phi_1,\ldots,\phi_k)=\emptyset$. By the same argument, $v(t_n-\alpha)$ is constantly equal to $\delta$ (which therefore is in $\Gamma_v$). In particular, $\alpha$ is not a pseudo-limit of $F$ so that $\delta_F>v(t_n-\alpha)=\delta=\delta_E$.

Since $C\cap B(\phi_1,\ldots,\phi_k)=\emptyset$, for every $V_F\in C$ there is an $i\in\{1,\ldots,k\}$ such that $\phi_i(t_n)\notin V$ for all $n$ sufficiently large; for such an $i$, $w_F(\phi_i)\leq 0$ and if equality holds then $v(\phi_i(t_n))\nearrow0$, where $F=\{t_n\}_{n\in\N}$. For each $i=1,\ldots,k$, let 
\begin{equation*}
\begin{aligned}
D_i= & \{V_F\in C_3\mid w_F(\phi_i)<0\}\quad\text{and}\\
H_i= & \{V_F\in C_3\mid w_F(\phi_i)=0,~\phi_i\notin V_F\},
\end{aligned}
\end{equation*}
so that $C_3=\bigcup_{i=1,\ldots,k}(D_i\cup H_i)$.

We claim that every $D_i$ can be separated from $V_E$ by open sets: indeed, let
\begin{equation*}
\Omega_i=\bigcup_{\substack{d\in K\\ v(d)<0}}B\left(\frac{d}{\phi_i(X)}\right).
\end{equation*}
As in the proof of Lemma \ref{lemma:corone-disj}, if $V_F\in D_i$ then there is a $\kappa<0$ such that $v(\phi_i(t_n))\leq\tau$ for all large $n$ and thus, taking $d\in K$ such that $0> v(d)\geq\kappa$,
\begin{equation*}
v\left(\frac{d}{\phi_i(t_n)}\right)\geq\kappa-v(\phi_i(t_n))\geq 0
\end{equation*}
and so $V_F\in\Omega_i$. Moreover, $\Omega_i\cap B(\phi_1,\ldots,\phi_k)=\emptyset$, since otherwise there should be a $t\in K$ such that 
\begin{equation*}
\begin{cases}
v(\phi_i(t))\geq 0\\
v\left(\frac{d}{\phi_i(t)}\right)\geq 0;
\end{cases}
\end{equation*}
for some $d\in K$ such that $v(d)<0$, but the latter condition implies that $v(\phi_i(t))\leq v(d)<0$. Hence, $B(\phi_1,\ldots,\phi_k)$ and $\Omega_i$ separate $V_E$ and $D_i$.

Since for every $V_F\in H_i$, with $F=\{t_n\}_{n\in\N}$,  we have $v(\phi_i(t_n))\nearrow 0$, every $H_i$ is finite by Lemma \ref{lemma:finiteness}. Furthermore, some zero $\beta\in\overline K$ of $\phi_i$ is a pseudo-limit of $F$, with respect to some extension $u$ of $v$ to $\overline{K}$ (see the proof of Lemma \ref{lemma:finiteness}). If $n$ is sufficiently large, then $\delta_E<u(t_n-\beta)<\delta_F$. Let $\gamma\in\Gamma_v$ be such that $\delta_E<\gamma<u(t_n-\beta)$. If we let $t=t_n$, then $w_E(X-t)\leq\delta_E<\gamma<w_F(X-t)$ (see  Remark \ref{Rem:osservazioni su wE}\ref{Rem:osservazioni su wE:X}). Then $V_E\in\Omega(t,\gamma)$ and $V_F\notin\Omega(t,\gamma)$. Hence, $V_F$ can be separated from $V_E$ by the open-closed set $\Omega(t,\gamma)$, and since $H_i$ is finite it can also be separated from $V_E$ by open-closed sets.

\smallskip

To summarize, we have
\begin{equation*}
C=C_1\cup C_2\cup\bigcup_{i=1}^kD_i\cup\bigcup_{i=1}^kH_i,
\end{equation*}
and each of the sets on the right hand side can be separated from $V_E$ by open sets; since the union is finite, $C$ and $V_E$ can be separated and therefore $\insiemeVE$ is regular.
\end{proof}

As a consequence, we can show that under some conditions $\insiemeVE$ is metrizable.
\begin{cor}\label{cor:Knum-metriz}
Let $V$ be a countable valuation domain. Then, $\insiemeVE$ is metrizable.
\end{cor}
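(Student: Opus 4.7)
The plan is to apply Urysohn's metrization theorem: a regular Hausdorff space with a countable basis is metrizable. Regularity is already supplied by Theorem \ref{teor:regolare}, and the Hausdorff property follows because any regular space in which points are closed is Hausdorff (and in the proof of Theorem \ref{teor:regolare} we observed that distinct $V_E, V_F$ are never comparable, so singletons are closed in the Zariski topology on $\insiemeVE$). Hence everything reduces to producing a countable basis.

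First I would observe that if $V$ is countable, then so is its quotient field $K$, since $K = \{a/b \mid a \in V, b \in V \setminus \{0\}\}$ is a quotient of $V \times (V\setminus\{0\})$. Consequently the polynomial ring $K[X]$, and therefore $K(X)$, is countable. The collection
\begin{equation*}
\mathcal{B} = \{B(\phi_1,\ldots,\phi_k) \cap \insiemeVE \mid k \in \N,\ \phi_1,\ldots,\phi_k \in K(X)\}
\end{equation*}
is by definition a basis for the Zariski topology on $\insiemeVE$ inherited from $\Zar(K(X)|V)$. Since $K(X)$ is countable, the set of finite subsets of $K(X)$ is countable, so $\mathcal{B}$ is a countable basis and $\insiemeVE$ is second countable.

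Combining second countability with regularity (Theorem \ref{teor:regolare}) and the $T_1$ property noted above, Urysohn's metrization theorem yields that $\insiemeVE$ is metrizable. There is essentially no obstacle here beyond invoking the previous theorem; the only point that requires a moment of care is the verification that the $T_1$ axiom is in force (so that we may speak of a regular Hausdorff space rather than merely a regular space), and this is immediate from the incomparability of distinct valuation domains of the form $V_E$ established via Theorem \ref{teor:equiv-VE} in the proof of Theorem \ref{teor:regolare}.
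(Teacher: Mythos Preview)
Your proof is correct and follows essentially the same route as the paper: countability of $V$ forces $K(X)$ to be countable, giving a countable basis of sets $B(\phi_1,\ldots,\phi_k)$, and then Urysohn's metrization theorem applies via the regularity established in Theorem \ref{teor:regolare}. Your extra care about the $T_1$ axiom is harmless but not strictly needed, since the paper's definition of regularity already requires points to be closed.
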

\begin{proof}
A basis for $\insiemeVE$ is $\mathcal{B}=\{B(\phi_1,\ldots,\phi_k)\mid \phi_1,\ldots\phi_k\in K(X)\}$. Since $V$ is countable, so are $K$ and $K(X)$; hence, the number of finite subsets of $K(X)$ is countable, and thus also $\mathcal{B}$ is countable. Therefore, $\insiemeVE$ is second-countable; since it is regular (Theorem \ref{teor:regolare}), it follows from Urysohn's metrization theorem \cite[e-2]{encytop} that $\insiemeVE$ is metrizable.
\end{proof}

\end{document}